\numberwithin{equation}{section}
\theoremstyle{plain}
\newtheorem{theorem}{Theorem}
\newtheorem{corollary}[theorem]{Corollary}
\newtheorem{lemma}[theorem]{Lemma}
\newtheorem{proposition}[theorem]{Proposition}
\newtheorem{remark}[theorem]{Remark}
\begin{document}

\title[A Class of Self-Exciting Point Processes]
{Asymptotics for a Class of Self-Exciting Point Processes}

\author{Tzu-Wei Yang}
\author{Lingjiong Zhu}
\address
{School of Mathematics\newline
\indent University of Minnesota-Twin Cities\newline
\indent 206 Church Street S.E.\newline
\indent Minneapolis, MN-55455\newline
\indent United States of America}
\email{
yangx953@umn.edu
\\
zhul@umn.edu}

\date{11 December 2014. \textit{Revised:} 11 December 2014}

\subjclass[2000]{60G55,60F05,60F10.}
\keywords{point processes, self-exciting processes,
limit theorems, tail probabilities.}

\begin{abstract}
In this paper, we study a class of self-exciting point processes. 
The intensity of the point process has a nonlinear dependence on the past history and time. 
When a new jump occurs, the intensity increases and we expect more jumps to come. Otherwise, the intensity
decays. The model is a marriage between stochasticity and dynamical system. 
In the short-term, stochasticity plays a major role and in the long-term, dynamical system governs the limiting
behavior of the system.
We study the law of large numbers, central limit theorem, large deviations and asymptotics for the tail probabilities.
\end{abstract}

\maketitle

\section{Introduction}

Let us consider a class of simple point process $N_{t}$ with intensity
at time $t$ given by
\begin{equation}\label{Dynamics}
\lambda_{t}:=\lambda\left(\frac{N_{t-}+\gamma}{t+1}\right),
\end{equation}
where $\lambda(\cdot):\mathbb{R}_{\geq 0}\rightarrow\mathbb{R}_{\geq 0}$ is a non-negative function
and we also assume that the point process has an empty past history, i.e., $N(-\infty,0]=0$.
$\gamma\geq 0$ will be called the initial condition and notice that $\lambda_{0}=\lambda(\gamma)$. 
We use $\frac{N_{t-}}{t+1}$ instead of $\frac{N_{t-}}{t}$ to
avoid the singularity at $t=0$. We use $N_{t-}$ instead of $N_{t}$ in \eqref{Dynamics} to guanrantee that
the intensity if $\mathcal{F}_{t}$-predictable, where $\mathcal{F}_{t}$ is the natural filtration.

The simple point process $N_{t}$ by its definition, is represents
a wide class of self-exciting point processes. When $x\mapsto\lambda(x)$ is an increasing function, the intensity
$\lambda_{t}$ increases whenever there is a new jump and otherwise it decays. This pheonomenon is known
as the self-exciting property in the literature. Self-exciting processes have been widely 
studied in the literature. 
The self-exciting property makes it ideal
to characterize the correlations in some complex systems, including finance.
Bacry et al. \cite{Bacry}, Bacry et al. \cite{BacryII} studied microstructure noise and Epps effect;
Chavez-Demoulin et al. \cite{Chavez} studied value-at-risk;
Errais et al. \cite{Errais} used self-exciting affine point processes to model the credit risk. 
A Cox-Ingersoll-Ross process with self-exciting jumps is proposed to model the short rate
in interest rate models in Zhu \cite{ZhuCIR}. 
 
The self-exciting point processes have also been applied to other fields, including 
seismology, see e.g. Hawkes and Adamopoulos \cite{HawkesIV}, Ogata \cite{OgataII}, 
sociology, see e.g. Crane and Sornette \cite{Crane} and Blundell et al. \cite{Blundell},
and neuroscience, see e.g. Chornoboy et al. \cite{Chornoboy}, Pernice et al. \cite{PerniceI},
Pernice et al. \cite{PerniceII}.

The most popular class of self-exciting point processes is Hawkes process, introduced by Hawkes \cite{Hawkes}. 
It has birth-immigration respresentation, see Hawkes and Oakes \cite{HawkesII} and the limit theorems
and Bartlett spectrum have been well studied in the literature, see e.g. Hawkes \cite{Hawkes}, Bacry et al. \cite{Bacry},
Bordenave and Torrisi \cite{Bordenave}, Zhu \cite{ZhuMDP}. The limit theorems for some variations and extensions 
of the linear Hawkes processes have been studied in e.g. Karabash and Zhu \cite{Karabash}, Zhu \cite{ZhuCIR}, 
Fierro et al. \cite{Fierro}, Merhdad and Zhu \cite{MehrdadZhu}.

Br\'{e}maud and Massouli\'{e} \cite{Bremaud} introduced
nonlinear Hawkes process as a generalization of classical Hawkes process. It is a simple point process with intensity
\begin{equation}
\lambda_{t}=\lambda\left(\int_{(-\infty,0)}h(t-s)N(ds)\right),
\end{equation}
where $\lambda(\cdot),h(\cdot)$ satisfies certain conditions and many properties of this process, and in particular 
the limit theorems have been studied recently in Zhu \cite{ZhuI}, Zhu \cite{ZhuII} and Zhu \cite{ZhuIII}.
The name ``nonlinear'' comes from the nonlinearity of the function $\lambda(\cdot)$. When $\lambda(\cdot)$
is linear, it reduces to the classical Hawkes process. Unlike linear Hawkes process, the nonlinear Hawkes process
does not lead to closed-form formulas of the limiting mean, variance in law of large numbers, central limit theorems
and the rate function in large deviations. 

The model \eqref{Dynamics} proposed in this paper preserves the self-exciting property and nonlinear structure
of the nonlinear Hawkes processes while at the same time have more analytical tractability. 
We also note that if we replace $\lambda_{t}$ in \eqref{Dynamics} by $\lambda(N_{t-})$, it becomes the classical
pure-birth process, see e.g. Feller \cite{Feller}. 

The model \eqref{Dynamics} is time-inhomogeneous Markovian. This can be seen by letting $Y_{t}:=\frac{N_{t}}{t+1}$ and $Y_{t}$ satisfies
\begin{equation}
dY_{t}=\frac{\lambda(Y_{t-})-Y_{t}}{t+1}dt+\frac{dM_{t}}{t+1},
\end{equation}
where $M_{t}=N_{t}-\int_{0}^{t}\lambda_{s}ds$ is a martingale. Let us define $\bar{Y}_{t}$ as the deterministic solution of
\begin{equation}
d\bar{Y}_{t}=\frac{\lambda(\bar{Y}_{t})-\bar{Y}_{t}}{t+1}dt.
\end{equation}
The limiting behavior of $\bar{Y}_{t}$ is well understood in dynamical systems. Under the assumptions that
there are finitely many fixed points of $x=\lambda(x)$. If we order these fixed points as $x_{1}<x_{2}<\cdots<x_{K}$, 
then $x_{1},x_{3},x_{5},\ldots$ are stable fixed points and $x_{2}<x_{4}<\cdots$ are unstable fixed points.
If $\bar{Y}_{0}$ lies on any one of the fixed points, then $\bar{Y}_{t}$ stays there. Otherwise, $\bar{Y}_{0}$ must lie
between a stable fixed point and an unstable fixed point and $\bar{Y}_{t}$ will converge to that neighboring stable fixed point
as $t\rightarrow\infty$. 
This is not true in our model \eqref{Dynamics}. For example, if $Y_{t}$ starts at $Y_{0}$ between $x_{1}$ and $x_{2}$, it may not
end up at $x_{1}$ as $t\rightarrow\infty$. That is because there exists a positive probability that the process can jump above $x_{2}$.
However, as time $t$ becomes large, the jump size of $Y_{t}$ that is $\frac{1}{t+1}$ becomes small. Therefore,
when time $t$ is small, stochasticity plays a major role in the behavior of \eqref{Dynamics} and when time $t$ is large, the
behavior of \eqref{Dynamics} is governed by the dynamical system.
Hence our model \eqref{Dynamics} can be seen as a marriage between the dynamical system and stochasticity.

Here are a list of questions we are interested to study.
\begin{itemize}
\item
If the equation $x\mapsto\lambda(x)$ has a unique fixed point $x^{\ast}$, do we have $\frac{N_{t}}{t}\rightarrow x^{\ast}$?

\item
What if the equation $x=\lambda(x)$ has more than one solution?
What should be the limiting set of $\frac{N_{t}}{t}$ as $t\rightarrow\infty$ then?

\item
What if $x=\lambda(x)$ has no solutions, what should be the correct scaling for $N_{t}$? 
And what should be the correct scalings for $\mathbb{E}[N_{t}]$ and $\text{Var}[N_{t}]$?

\item
Can we study the large deviations for $\mathbb{P}(\frac{N_{t}}{t}\in\cdot)$? And central limit theorems?

\item
For a fixed time interval $[0,t]$, what is the asymptotics for the tail probabilities $\mathbb{P}(N_{t}\geq\ell)$
as $\ell\rightarrow\infty$?
\end{itemize}

We will show that under certain conditions for the model \eqref{Dynamics}, 
the limiting sets of the law of large numbers for $\frac{N_{t}}{t}$ is the 
set of stable fixed points of $x=\lambda(x)$. When the equation $x=\lambda(x)$ has a unique fixed point, the limit
is therefore the unique fixed point. It gets more interesting when there are more than one fixed point.
Second-order properties will also been studied, including the variance and covariance structure. A sample-path
large deviation principle will be derived and hence the large deviations for $\mathbb{P}(N_{t}/t\in\cdot)$ as well.

\begin{figure}
\centering
\begin{subfigure}
  \centering
  \includegraphics[scale=0.40]{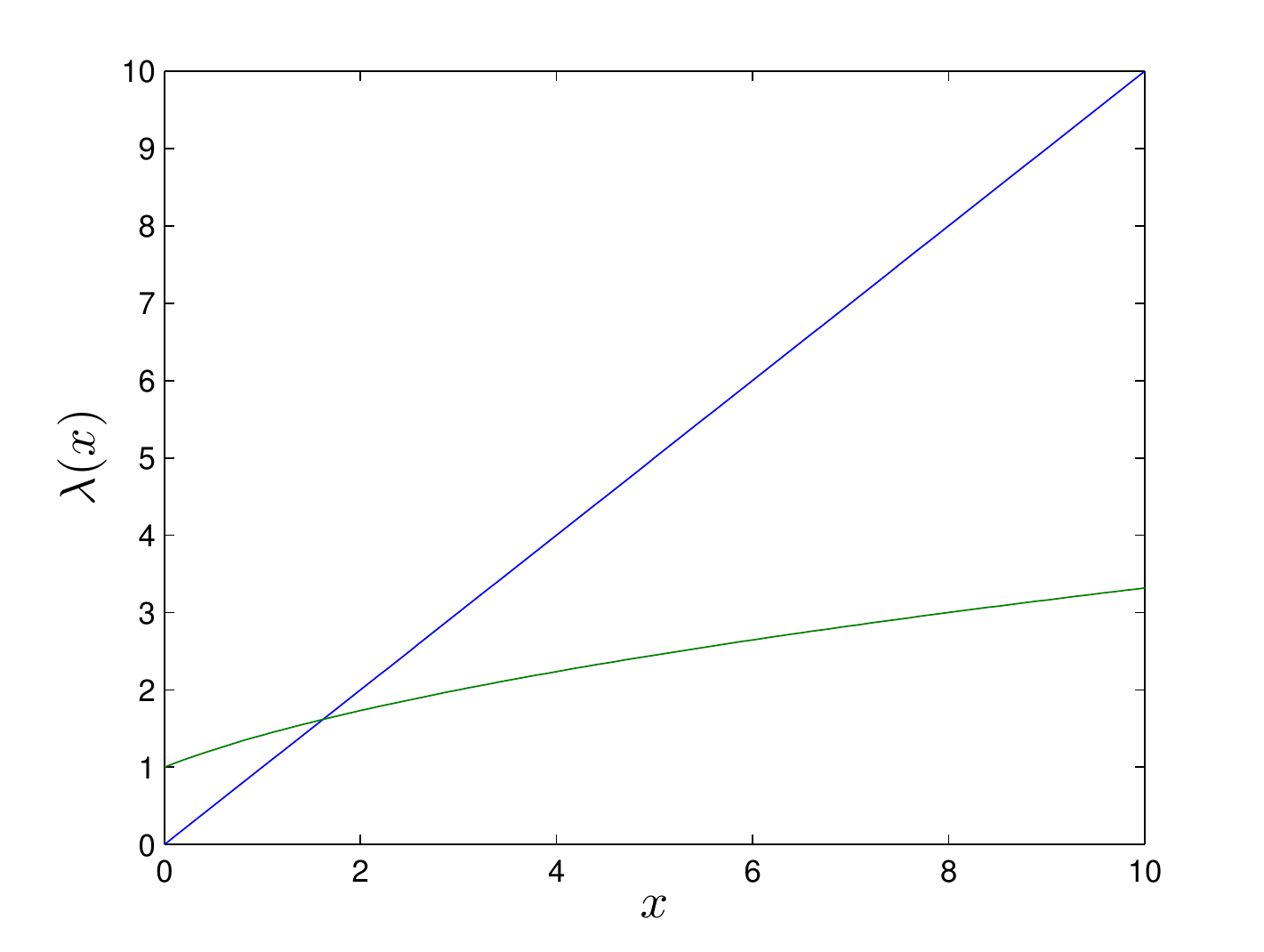}
\end{subfigure}
\begin{subfigure}
  \centering
  \includegraphics[scale=0.40]{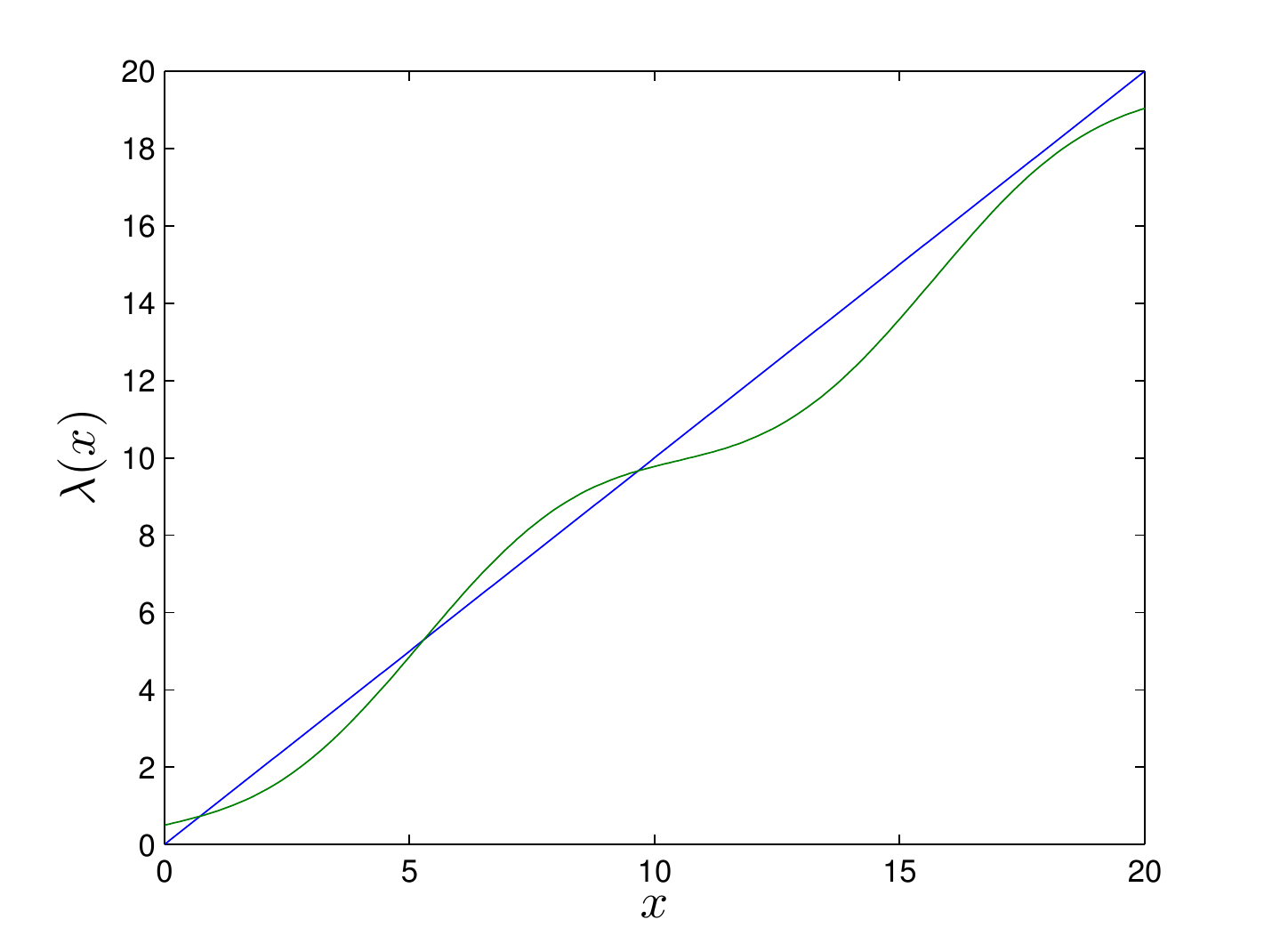}
\end{subfigure}
\caption{On the left hand side, we have the plot of $\lambda(x)=\sqrt{1+x}$ 
and it has a unqiue fixed point to
the equation $x=\lambda(x)$, $x^{\ast}=\frac{1+\sqrt{5}}{2}$. On the right hand side,
we have the plot of $\lambda(x)=0.9x-\sin(0.6x)+0.5$ and there are three fixed points
of the equation $x=\lambda(x)$. Two are stable and the one in between is unstable.}
\end{figure}

\begin{figure}
\centering
\begin{subfigure}
  \centering
  \includegraphics[scale=0.35]{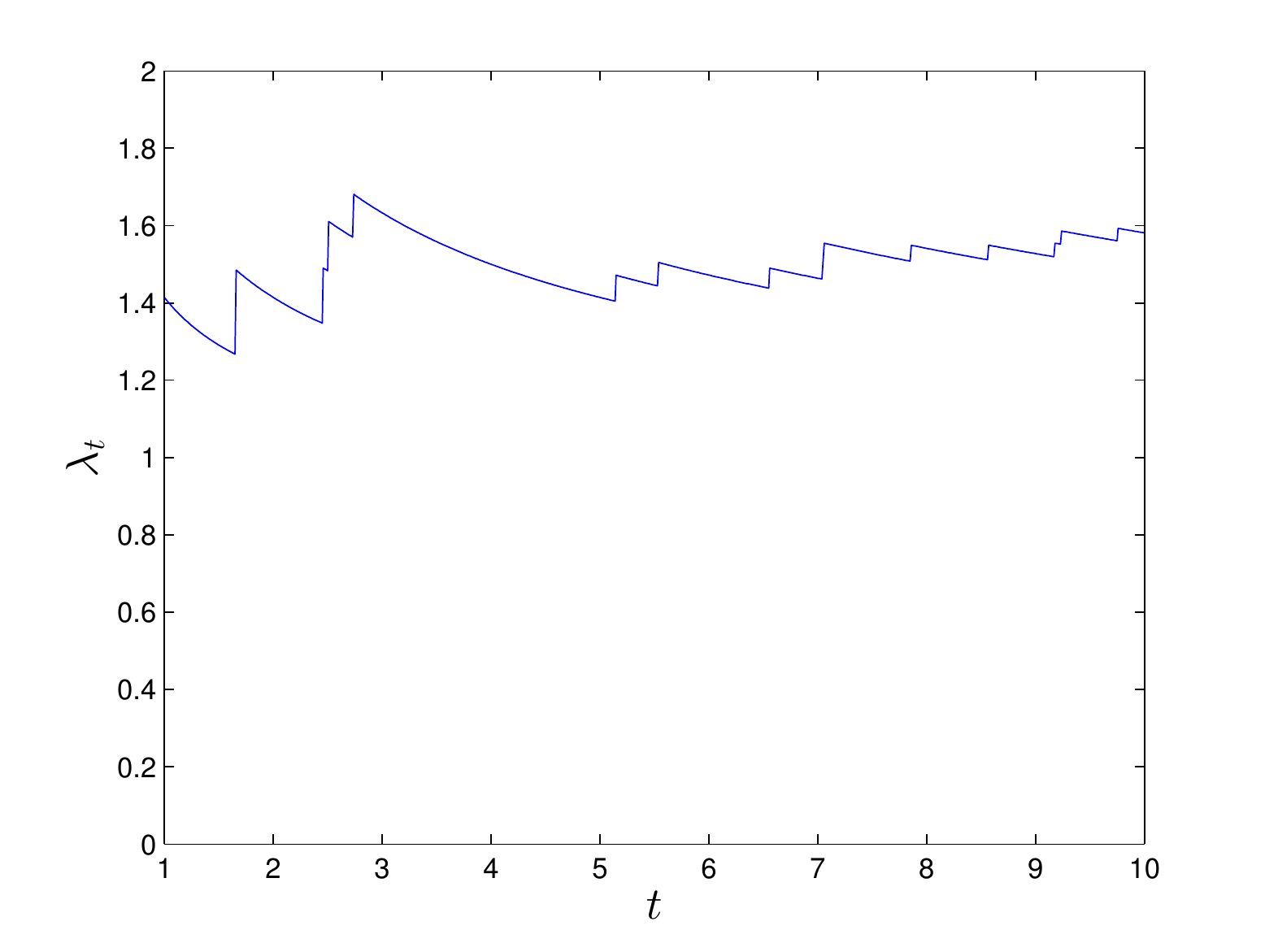}
\end{subfigure}
\begin{subfigure}
  \centering
  \includegraphics[scale=0.35]{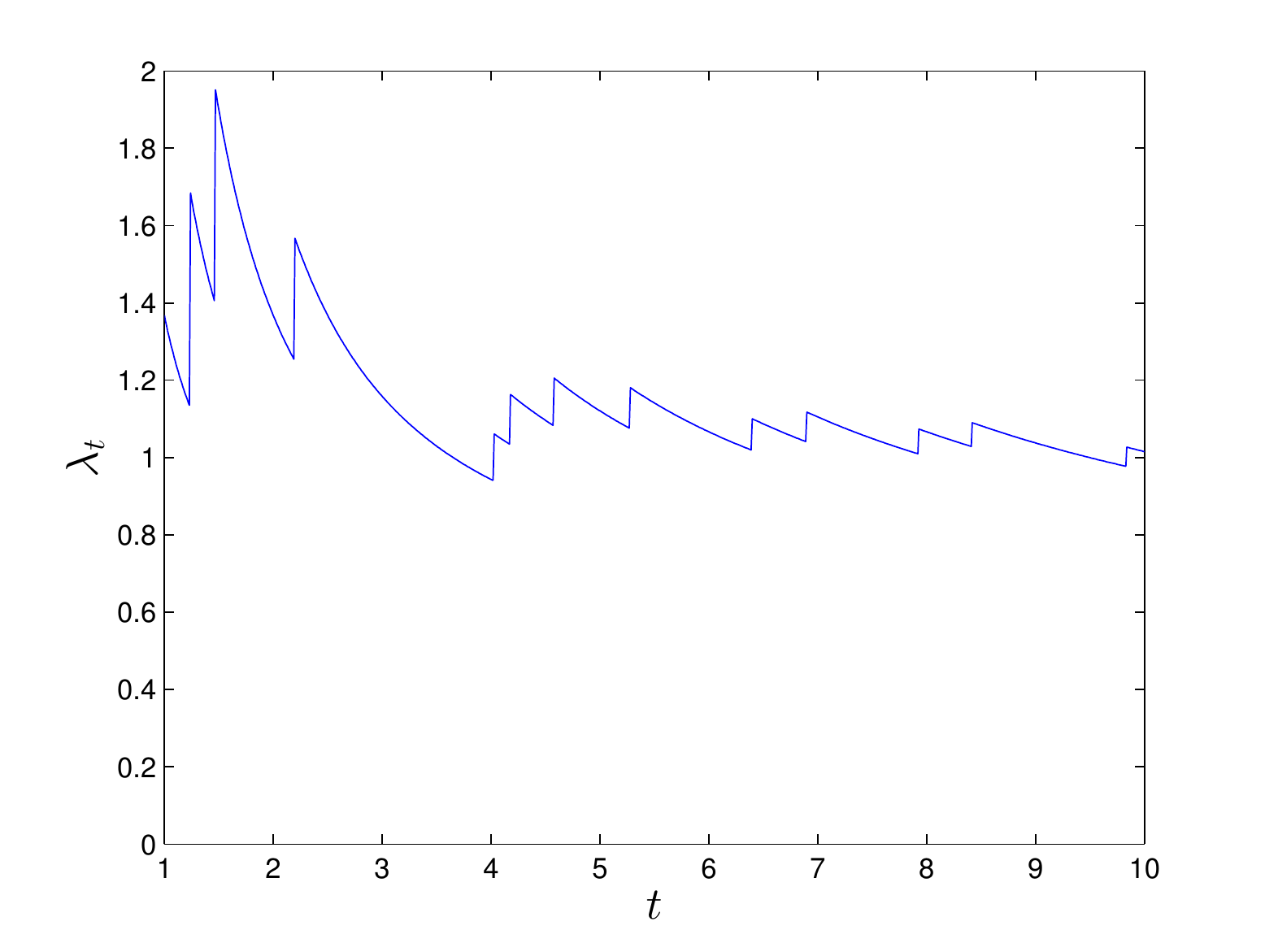}
\end{subfigure}
\caption{On the left hand side, we have the plot of $\lambda_{t}$ for $\lambda(x)=\sqrt{1+x}$. 
On the right hand side,
we have the plot of $\lambda_{t}$ for $\lambda(x)=0.9x-\sin(0.6x)+0.5$. We zoom into the time interval
$[1,10]$ to see the local self-exciting behavior of the model.}
\end{figure}

\begin{figure}[htb]
\begin{center}
\includegraphics[scale=0.70]{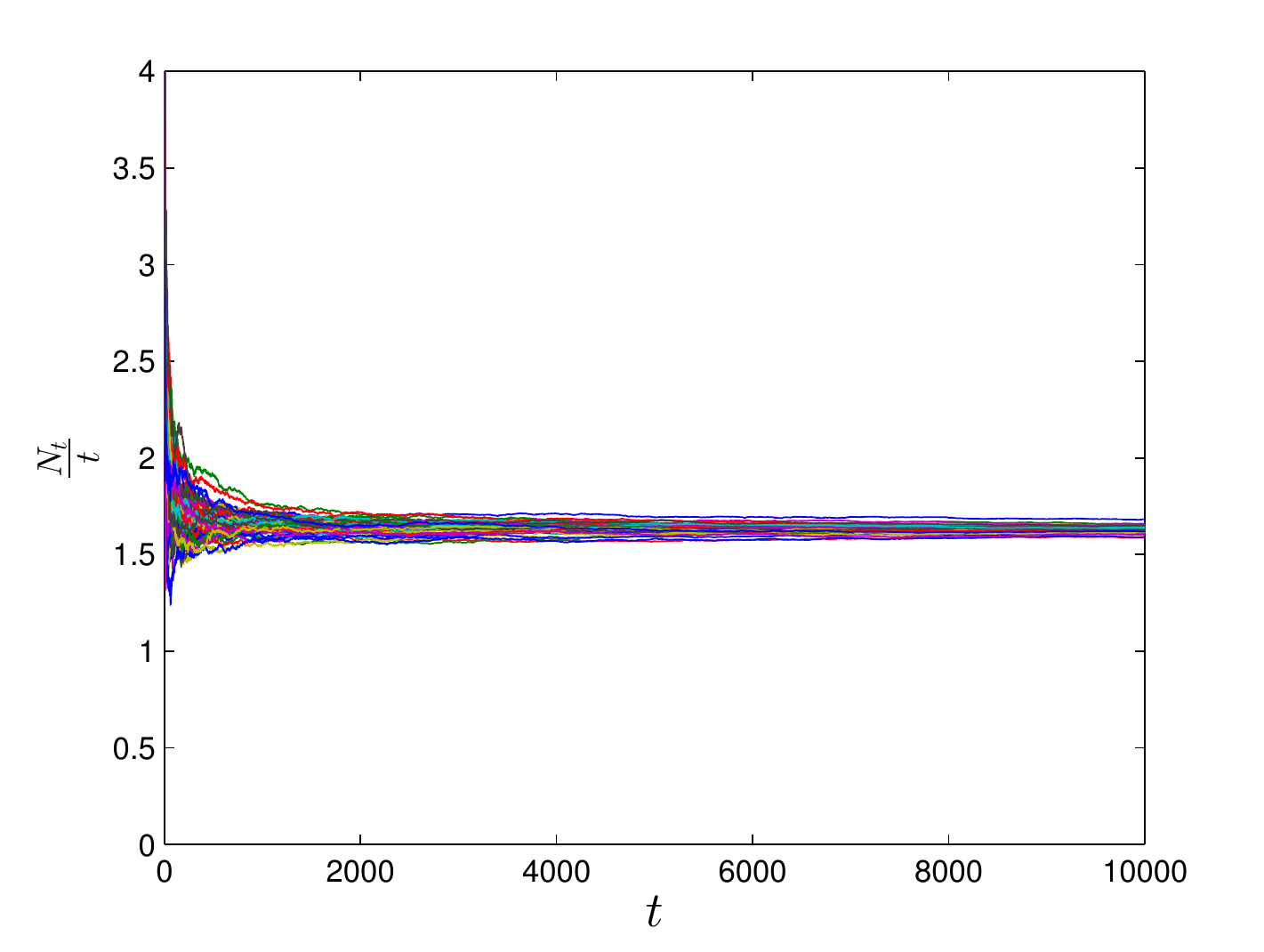}
\caption{$\lambda(x)=\sqrt{1+x}$ and it has a unqiue fixed point to
the equation $x=\lambda(x)$, $x^{\ast}=\frac{1+\sqrt{5}}{2}$. 
The initial condition is assumed to be $N_{0}=5$.
As time $t\rightarrow\infty$,
$\frac{N_{t}}{t}$ converges to this unique fixed point.}
\label{Unique}
\end{center}
\end{figure}

\begin{figure}[htb]
\begin{center}
\includegraphics[scale=0.70]{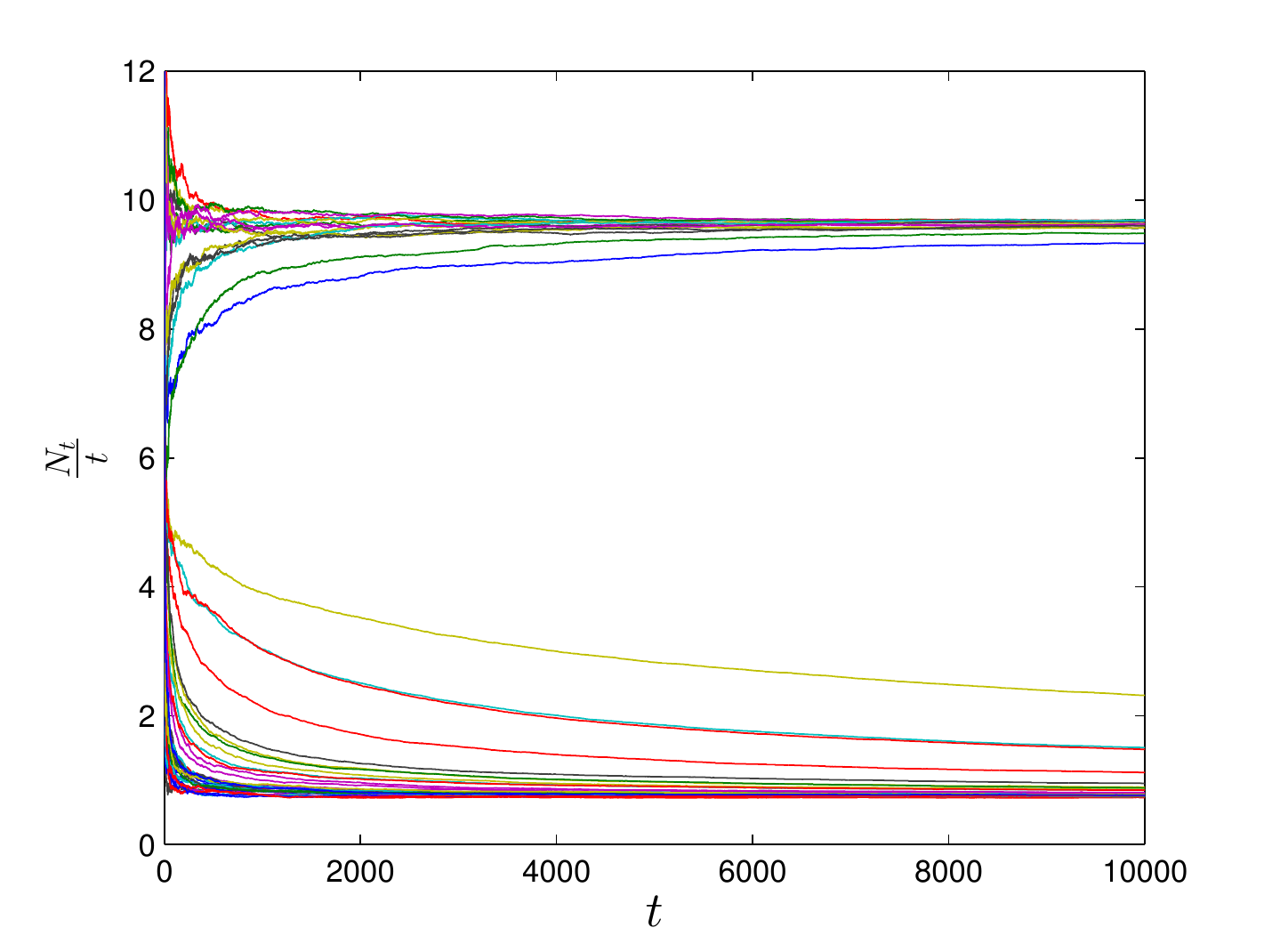}
\caption{$\lambda(x)=0.9x-\sin(0.6x)+0.5$ and there are three fixed points
of the equation $x=\lambda(x)$. The initial condition is assumed to be $N_{0}=5$.
Two are stable and the one in between is unstable.
As time $t\rightarrow\infty$, $\frac{N_{t}}{t}$ will converge to either one of the stable fixed points.}
\label{Nonunique}
\end{center}
\end{figure}

Figure \ref{Unique} illustrates that when there is a unique fixed point of $x=\lambda(x)$, 
as time $t\rightarrow\infty$, $\frac{N_{t}}{t}$ converges to this unique fixed point.
When there are more than one fixed point, Figure \ref{Nonunique} illustrates that as time $t\rightarrow\infty$, $\frac{N_{t}}{t}$
will converge to the set of all the stable fixed points of $x=\lambda(x)$. Let us say in Figure \ref{Nonunique}, the two stable
fixed points are $x_{1}<x_{2}$. Let
\begin{equation}
p_{1}(x):=\mathbb{P}_{N_{0}=x}\left(\lim_{t\rightarrow\infty}\frac{N_{t}}{t}=x_{1}\right),
\qquad
p_{2}(x):=\mathbb{P}_{N_{0}=x}\left(\lim_{t\rightarrow\infty}\frac{N_{t}}{t}=x_{2}\right).
\end{equation}
Then, for any initial condition $N_{0}=x$, $p_{1}(x)+p_{2}(x)=1$. Intuitively, it is clear that when $N_{0}=x$ 
is closer to $x_{1}$ than $x_{2}$ is in between $x_{1}$ and $x_{2}$, 
there should be a higher probability for the limit $\lim_{t\rightarrow\infty}\frac{N_{t}}{t}$ to end up
at $x_{1}$. If the starting point is lower than $x_{1}$, it is also more likely for the limit to end up at $x_{1}$ and so on and so forth.
We can therefore use the same $\lambda(x)$ as in Figure \ref{Nonunique} and make a plot of $p_{1}$ and $p_{2}$
as a function of the initial starting point $N_{0}=x$. From Figure \ref{p1p2}, it turns out that $p_{1}(x)$ is monotonically decreasing
in $x$ and $p_{2}(x)=1-p_{1}(x)$ is monotonically increasing in $x$.

\begin{figure}[htb]
\begin{center}
\includegraphics[scale=0.70]{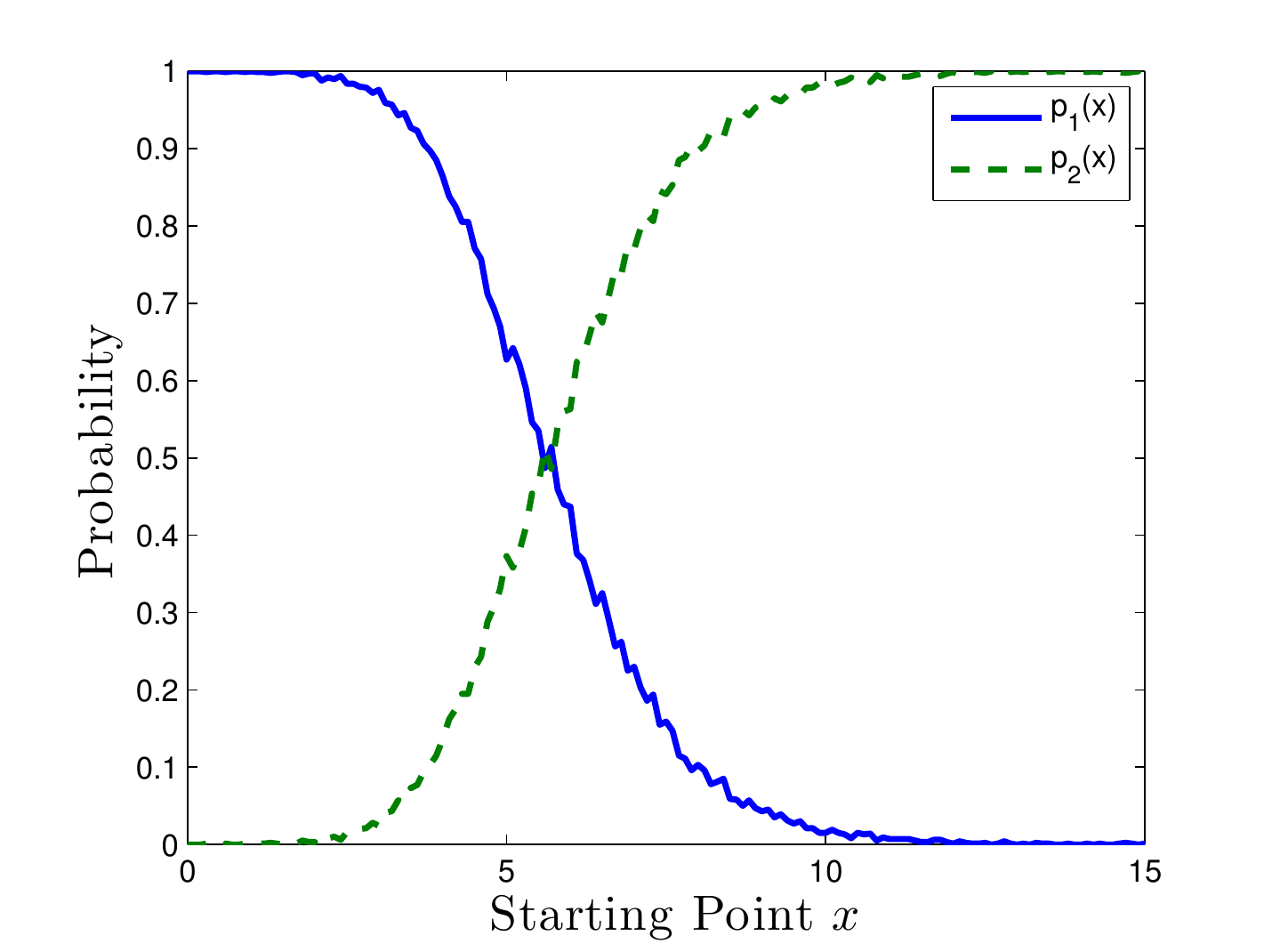}
\caption{Plot of the probabilities that $\frac{N_{t}}{t}$ will converge to $x_{1}<x_{2}$, the stable
fixed point of $x=\lambda(x)=0.9x-\sin(0.6x)+0.5$ as a function of the starting positions.}
\label{p1p2}
\end{center}
\end{figure}

The paper is organized as follows. In Section \ref{MainResultsSection} we will state
all the main results. In particular, we will discuss the law of large numbers in Section \ref{LLNSection}, 
large deviations in Section \ref{LDPSection}, time asymptotics for different regimes in Section \ref{TimeAsympSection}, 
asymptotics for high initial values in Section \ref{LargeInitialSection} 
and marginal and tail probabilities in Section \ref{TailSection}.
The proofs will be given in Section \ref{ProofSection}. Finally some open problems will be suggested in Section \ref{OpenSection}.

\section{Main Results}\label{MainResultsSection}

Throughout the paper, we assume the following conditions hold.
\begin{itemize}
\item
$\lambda:\mathbb{R}_{\geq 0}\rightarrow\mathbb{R}_{\geq 0}$ is an increasing and continuously differentiable function.

\item
$x\mapsto\lambda(x)$ has finitely many fixed points, i.e., the equation $x=\lambda(x)$ has finitely many solutions.
The fixed points are either strictly stable or strictly unstable, i.e., if $x^{\ast}$ is a fixed point, then either $\lambda'(x^{\ast})<1$
or $\lambda'(x^{\ast})>1$.
\end{itemize}

\subsection{Law of Large Numbers}\label{LLNSection}

Assume that $\lambda(z)\leq\beta+\alpha z$ for some $0<\alpha<1$ and $\beta>0$
and $\lambda(x)=x$ has a unique solution $x^{\ast}$. Then,
by Proposition \ref{Expectation}, $\sup_{t>0}\frac{\mathbb{E}[N_{t}]}{t+1}<\infty$. 
Thus $\frac{N_{t}}{t+1}$ is tight. Heuristically, if we have $\frac{N_{t}}{t}\rightarrow x$ a.s. as $t\rightarrow\infty$.
Then, we have as $t\rightarrow\infty$, a.s.,
\begin{equation}
\frac{N_{t}}{t}\rightarrow x,
\qquad
\frac{1}{t}\int_{0}^{t}\lambda\left(\frac{N_{s-}}{s+1}\right)ds\rightarrow\lambda(x).
\end{equation}
Moreover, $M_{t}=N_{t}-\int_{0}^{t}\lambda_{s}ds$ is a martingale and
\begin{equation}
\mathbb{E}[(M_{t})^{4}]\leq C\mathbb{E}[N_{t}^{2}]=O(t^{2})
\end{equation}
by Burkholder-Davis-Gundy inequality and Proposition \ref{Expectation}. Thus, $\frac{M_{t}}{t}\rightarrow 0$ in a.s. by Borel-Cantelli lemma.
Since $\lambda(x)=x$ has a unique solution $x^{\ast}$, we conclude that $\frac{N_{t}}{t}\rightarrow x^{\ast}$ a.s.

\begin{theorem}\label{LLNThm}
Assume that $\lambda(z)$ is increasing, $\alpha$-Lipschitz with $0<\alpha<1$ 
and $x^{\ast}$ is the unique solution to the equation $x=\lambda(x)$
and $x^{\ast}=\infty$ if the solution does not exist, then
\begin{equation}
\frac{N_{t}}{t}\rightarrow x^{\ast},
\end{equation}
in probability as $t\rightarrow\infty$. If we further assume that $\lambda(\cdot)\leq C_{0}<\infty$ for
some universal constant $C_{0}$, then we have the almost sure convergence.
\end{theorem}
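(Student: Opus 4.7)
The plan is to formalize the heuristic already given by pairing a Burkholder--Davis--Gundy bound on the martingale part with a Gronwall-type estimate on the drift that exploits the $\alpha$-contraction of $\lambda$. Note first that under the hypotheses $x^{\ast}$ is automatically finite: since $\lambda$ is $\alpha$-Lipschitz with $\alpha<1$, the map $z\mapsto\lambda(z)-z$ has slope bounded above by $\alpha-1<0$, so it decreases from $\lambda(0)\ge 0$ to $-\infty$ and crosses zero exactly once. In particular $\lambda(z)\le x^{\ast}+\alpha z$, and Proposition~\ref{Expectation} gives $\sup_{t\ge 0}\mathbb{E}[N_{t}]/(t+1)<\infty$. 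Writing $N_{t}=\int_{0}^{t}\lambda_{s}\,ds+M_{t}$ with predictable quadratic variation $\langle M\rangle_{t}=\int_{0}^{t}\lambda_{s}\,ds$, BDG yields $\mathbb{E}[M_{t}^{2}]=O(t)$, so $M_{t}/t\to 0$ in $L^{2}$, in particular in probability.

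The main step is the drift. Set $Y_{t}:=N_{t}/(t+1)$ and $W_{t}:=Y_{t}-x^{\ast}$. The product rule applied to $N_{t}\cdot(t+1)^{-1}$ gives
\begin{equation*}
dW_{t}=\frac{\lambda(Y_{t-})-Y_{t-}}{t+1}\,dt+\frac{dM_{t}}{t+1}=\frac{(\tilde\alpha_{t}-1)\,W_{t-}}{t+1}\,dt+\frac{dM_{t}}{t+1},
\end{equation*}
where $\tilde\alpha_{t}:=[\lambda(Y_{t-})-\lambda(x^{\ast})]/W_{t-}\in[0,\alpha]$ is the secant slope of $\lambda$ between $Y_{t-}$ and $x^{\ast}$ (set to $0$ if $W_{t-}=0$), nonnegative by monotonicity and bounded by the Lipschitz constant. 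Expanding $W_{t}^{2}$ via Itô, using $[W]_{t}=\int_{0}^{t}(s+1)^{-2}\,dN_{s}$, and taking expectations produces
\begin{equation*}
\frac{d}{dt}\mathbb{E}[W_{t}^{2}]\le-\frac{2(1-\alpha)}{t+1}\,\mathbb{E}[W_{t}^{2}]+\frac{\mathbb{E}[\lambda_{t}]}{(t+1)^{2}}\le-\frac{2(1-\alpha)}{t+1}\,\mathbb{E}[W_{t}^{2}]+\frac{C}{(t+1)^{2}},
\end{equation*}
the last constant absorbing $\sup_{t}\mathbb{E}[\lambda_{t}]<\infty$ via Proposition~\ref{Expectation}. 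Multiplying by the integrating factor $(t+1)^{2(1-\alpha)}$ and integrating forces $\mathbb{E}[W_{t}^{2}]\to 0$, so that $N_{t}/t\to x^{\ast}$ in $L^{2}$ and hence in probability.

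For the almost-sure statement under the additional hypothesis $\lambda\le C_{0}$, I would upgrade the martingale bound to the fourth moment: BDG gives $\mathbb{E}[M_{t}^{4}]\le C\mathbb{E}[\langle M\rangle_{t}^{2}]=O(t^{2})$ since $\lambda\le C_{0}$ also yields $\mathbb{E}[N_{t}^{2}]=O(t^{2})$. Chebyshev plus Borel--Cantelli along the integer subsequence then give $M_{n}/n\to 0$ a.s., and the uniform bound on $\lambda$ controls $\sup_{n\le t\le n+1}|M_{t}-M_{n}|$ (and hence $\sup_{n\le t\le n+1}|N_{t}/t-N_{n}/n|$), so convergence passes to continuous $t$. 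A parallel $L^{4}$ version of the Gronwall step above (or, equivalently, variation of constants applied to the linear SDE for $W_{t}$ together with Doob's inequality on the resulting martingale) then upgrades convergence of $\mathbb{E}[W_{t}^{2}]$ to almost-sure convergence of $W_{t}$. The main technical obstacle I anticipate is the careful jump bookkeeping in the Itô expansion of $W_{t}^{2}$ and in the interpolation between integer times; every other estimate reduces to the sublinear growth of $\lambda$ together with Proposition~\ref{Expectation}.
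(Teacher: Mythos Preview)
Your argument for convergence in probability is correct but follows a genuinely different route from the paper. The paper builds $N_t$ as a sum $\sum_{k\ge 0}N_t^{(k)}$ via Poisson embedding, shows $\tfrac{1}{t}\sum_{j\le k}N_t^{(j)}\to\lambda^{(k+1)}(0)$ a.s.\ for each fixed $k$, and then uses the geometric tail bound $\mathbb{E}[N_t^{(k)}]/t\le\alpha^k\lambda(0)$ to control the remainder. Your approach instead runs a Gronwall inequality on $\mathbb{E}[(Y_t-x^*)^2]$ directly, which is essentially the argument the paper gives separately for Theorem~\ref{StochDeter} (there they compare $Y_t$ to the ODE solution $\bar Y_t$ rather than to $x^*$, but the computation is the same). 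Your version is arguably more direct and even yields the explicit rates of Theorem~\ref{StochDeter} as a by-product; the paper's embedding, on the other hand, makes the role of the iterates $\lambda^{(k)}(0)\uparrow x^*$ transparent.

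For the almost-sure part there is a real gap. The $L^4$ Gronwall you propose gives, for $\alpha>\tfrac12$, only $\mathbb{E}[W_t^4]=O\bigl(t^{-4(1-\alpha)}\bigr)$, which is \emph{not} summable once $\alpha\ge\tfrac34$; so Borel--Cantelli fails in that range. The fix within your framework is either to push the moment order up to some $2p>\tfrac{1}{1-\alpha}$ (the same computation shows $\mathbb{E}[W_t^{2p}]=O(t^{-2p(1-\alpha)})$ by induction on $p$), or---cleaner---to note that under $\lambda\le C_0$ the martingale $Z_t:=\int_0^t(s+1)^{-1}\,dM_s$ has $\langle Z\rangle_\infty\le C_0$ and hence converges a.s., after which the pathwise ODE $\dot W=(\tilde\alpha-1)(t+1)^{-1}W+\dot Z$ (change time to $\tau=\log(t+1)$) forces $W_t\to 0$. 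Your ``variation of constants plus Doob'' remark is pointing at this, but as written it does not work: the random coefficient $e^{-A_s}$ can be as large as $s+1$, so the quadratic variation of $\int e^{-A_s}(s+1)^{-1}\,dM_s$ is not bounded. The paper sidesteps all of this by invoking the sample-path large deviation principle (Theorem~\ref{LDPThm}) together with Proposition~\ref{ZeroProp}: since $x^*$ is the unique zero of the rate function, $\mathbb{P}(|N_t/t-x^*|>\epsilon)$ decays exponentially and Borel--Cantelli applies immediately.
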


\begin{remark}
In Theorem \ref{LLNThm}, if $\lambda(z)\leq\beta+\alpha z$ for some $0<\alpha<1$, $\beta>0$ and $\lambda(z)$
is continuous, then the equation $\lambda(x)=x$ must have at least one solution. In the case that $0$ is the only
solution to $\lambda(x)=x$, we have $\frac{N_{t}}{t}\rightarrow 0$ in probability as $t\rightarrow\infty$.
On the other hand, if $\lambda(z)$ is continuous and the equation $\lambda(x)=x$ has no solution, then
we must have $\lambda(z)>z$ for any $z\geq 0$. Hence, we have $\lambda(z)\geq(1-\epsilon)z+\delta$ for some $\delta,\epsilon>0$
sufficiently small. Note that $(1-\epsilon)z+\delta=z$ if and only if $z=\frac{\delta}{\epsilon}$. Choose $\delta\gg\epsilon$
and by Theorem \ref{LLNThm}, we conclude that $\frac{N_{t}}{t}\rightarrow\infty$ in probability as $t\rightarrow\infty$
if $\lambda(z)$ is continuous and the equation $\lambda(x)=x$ has no solution.
\end{remark}

In Theorem \ref{LLNThm}, we proved that $\frac{N_{t}}{t}$ converges to the unique fixed point of $x=\lambda(x)$
in probability under the $\alpha$-Lipschitz condition for $\lambda(\cdot)$ for some $0<\alpha<1$ and proved
that the convergence is a.s. convergence under a stronger condition.
Next, we compare the underlying stochastic process $Y_{t}:=\frac{N_{t}}{t+1}$ to its deterministic counterpart 
$\bar{Y}_{t}$ where 
$\bar{Y}_{t}$ is the deterministic solution of
\begin{equation}
d\bar{Y}_{t}=\frac{\lambda(\bar{Y}_{t})-\bar{Y}_{t}}{t+1}dt,
\end{equation}
whose asymptotic behavior
is entirely governed by the dynamical system and prove a law of large numbers in the $L^{2}(\mathbb{P})$ norm.
As a by-product, we also get the convergence rate of the underlying stochastic process to its deterministic counterpart.

\begin{theorem}\label{StochDeter}
Assume that $\lambda(\cdot)$ is $\alpha$-Lipschitz for some $0<\alpha<1$, then
$Y_{t}=\frac{N_{t}}{t+1}$ converges to the unique fixed point of $x=\lambda(x)$ as $t\rightarrow\infty$
in $L^{2}(\mathbb{P})$ norm. Moreover, as $t\rightarrow\infty$,
\begin{equation}\label{AsympBound}
\mathbb{E}[(Y_{t}-\bar{Y}_{t})^{2}]=
\begin{cases}
O(\frac{1}{t}) & \text{if $0<\alpha<\frac{1}{2}$}
\\
O(\frac{1}{t^{2(1-\alpha)}}) & \text{if $\frac{1}{2}<\alpha<1$}
\\
O(\frac{\log(t)}{t}) &\text{if $\alpha=\frac{1}{2}$}
\end{cases}.
\end{equation}
\end{theorem}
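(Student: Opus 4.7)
\medskip

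\noindent\textbf{Proof plan.} The plan is to control the squared distance $D_t^2:=(Y_t-\bar Y_t)^2$ by a standard Itô/semimartingale computation and then close a Gronwall-type inequality with a time-dependent integrating factor, doing a case split that recovers exactly the three regimes in \eqref{AsympBound}. Writing $dY_t=\frac{\lambda(Y_{t-})-Y_t}{t+1}dt+\frac{dM_t}{t+1}$ and $d\bar Y_t=\frac{\lambda(\bar Y_t)-\bar Y_t}{t+1}dt$ with $Y_0=\bar Y_0$, the difference $D_t$ is a semimartingale whose only jumps are those of $N$ (of size $1/(t+1)$), so $d[D]_t=dN_t/(t+1)^2$. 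The product formula gives
\begin{equation*}
d(D_t^2)=\frac{2D_{t-}}{t+1}\bigl[\lambda(Y_{t-})-\lambda(\bar Y_t)-D_t\bigr]dt+\frac{2D_{t-}}{t+1}dM_t+\frac{dN_t}{(t+1)^2}.
\end{equation*}

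\noindent Taking expectations, using that $M$ is a martingale (the stochastic integral is a true martingale after a standard localization thanks to $\sup_t\mathbb E[Y_t^2]<\infty$, which follows from Proposition \ref{Expectation}), $\mathbb E[dN_t]=\mathbb E[\lambda_t]dt$ with $\mathbb E[\lambda_t]\le\lambda(0)+\alpha\,\mathbb E[Y_t]\le C$, and the $\alpha$-Lipschitz bound $|\lambda(Y_t)-\lambda(\bar Y_t)|\le\alpha|D_t|$, I get, for $f(t):=\mathbb E[D_t^2]$,
\begin{equation*}
f'(t)\le\frac{2(\alpha-1)}{t+1}f(t)+\frac{C}{(t+1)^2}.
\end{equation*}

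\noindent Multiplying by the integrating factor $(t+1)^{2(1-\alpha)}$ and integrating from $0$ to $t$ gives
\begin{equation*}
(t+1)^{2(1-\alpha)}f(t)\le f(0)+C\int_0^t(s+1)^{-2\alpha}ds.
\end{equation*}
Now the three cases in \eqref{AsympBound} fall out immediately from the behavior of $\int_0^t(s+1)^{-2\alpha}ds$: it is $O(1)$ when $\alpha>1/2$ yielding $f(t)=O(t^{-2(1-\alpha)})$; it is $O(\log t)$ when $\alpha=1/2$ yielding $f(t)=O(\log t/t)$; and it is $O(t^{1-2\alpha})$ when $\alpha<1/2$ yielding $f(t)=O(1/t)$.

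\noindent For the $L^2(\mathbb P)$ convergence to $x^\ast$, I will independently verify $\bar Y_t\to x^\ast$. Setting $h(t):=\bar Y_t-x^\ast$, Lipschitzness at the fixed point gives $\dot h(t)\le\frac{(\alpha-1)}{t+1}h(t)$ when $h(t)>0$ and a symmetric inequality when $h(t)<0$, so $|h(t)|\le|h(0)|(t+1)^{\alpha-1}\to 0$. Combined with $\mathbb E[D_t^2]\to 0$ from \eqref{AsympBound}, this yields $\mathbb E[(Y_t-x^\ast)^2]\to 0$ by the triangle inequality in $L^2$.

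\noindent The only non-routine step is the Itô computation and the martingale argument (justifying that the $dM_t$ term vanishes in expectation), but thanks to Proposition \ref{Expectation} this is standard; all the sharpness in the rates comes from the case analysis of the integrating factor, which is mechanical.
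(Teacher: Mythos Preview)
Your proposal is correct and follows essentially the same route as the paper: both derive the differential inequality $m'(t)\le\frac{2(\alpha-1)}{t+1}m(t)+\frac{C}{(t+1)^2}$ from the It\^o/product formula for $(Y_t-\bar Y_t)^2$, use the $\alpha$-Lipschitz bound to absorb the cross term, appeal to Proposition~\ref{Expectation} for the uniform bound on $\mathbb E[\lambda_t]$, and then solve the resulting linear ODE (you via the integrating factor $(t+1)^{2(1-\alpha)}$, the paper via the explicit solution) to read off the three regimes. Your extra paragraph verifying $\bar Y_t\to x^\ast$ via $|h(t)|\le|h(0)|(t+1)^{\alpha-1}$ is a nice explicit touch that the paper leaves implicit.
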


\begin{theorem}\label{NoFixed}
Assume that $x\mapsto\lambda(x)$ is continuous and increasing.
For any interval $I=[a,b]$ not containing any fixed point
of the equation $x=\lambda(x)$, we have
\begin{equation}
\lim_{t\rightarrow\infty}\mathbb{P}\left(\frac{N_{t}}{t}\in I\right)=0.
\end{equation}
\end{theorem}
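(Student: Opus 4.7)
The plan is to couple $N_{t}$ with an auxiliary self-exciting process whose intensity falls under the hypotheses of Theorem~\ref{LLNThm}, thereby forcing $Y_{t}:=(N_{t}+\gamma)/(t+1)$ outside the interval $[a,b]$. Since the continuous function $x\mapsto\lambda(x)-x$ does not vanish on the compact interval $[a,b]$, it has constant sign there, so either $\lambda(x)>x$ throughout $[a,b]$ (Case~I) or $\lambda(x)<x$ throughout (Case~II). I sketch Case~I; Case~II is handled by the symmetric argument with dominating and dominated processes interchanged.

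In Case~I, $\lambda(b)>b$ and continuity provide a point $\tilde{x}^{\ast}\in(b,\infty)$ with $\lambda(\tilde{x}^{\ast})>\tilde{x}^{\ast}$. I construct an auxiliary function $\tilde{\lambda}:\mathbb{R}_{\geq 0}\to\mathbb{R}_{\geq 0}$ that is (i)~increasing, (ii)~$\alpha$-Lipschitz for some $\alpha\in(0,1)$, (iii)~pointwise dominated by $\lambda$, and (iv)~has $\tilde{x}^{\ast}$ as its unique fixed point of $\tilde{\lambda}(x)=x$. A natural choice is piecewise linear: slope $\alpha$ on an interval containing $\tilde{x}^{\ast}$, and held constant outside that window, with parameters selected via the $C^{1}$ regularity of $\lambda$ so that $\tilde{\lambda}\leq\lambda$ everywhere.

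I then realize both $N_{t}$ and $\tilde{N}_{t}$ on one probability space via a marked Poisson random measure $\Pi$ on $\mathbb{R}_{\geq 0}^{2}$ with intensity $ds\,dy$: $N_{t}$ (resp.\ $\tilde{N}_{t}$) counts the points $(s,y)\in\Pi$ with $s\leq t$ and $y\leq\lambda(Y_{s-})$ (resp.\ $y\leq\tilde{\lambda}(\tilde{Y}_{s-})$), where $\tilde{Y}_{s-}:=(\tilde{N}_{s-}+\gamma)/(s+1)$. An induction over the jump times of $N$, using monotonicity of $\tilde{\lambda}$ together with $\tilde{\lambda}\leq\lambda$, gives $\tilde{N}_{t}\leq N_{t}$ and hence $\tilde{Y}_{t}\leq Y_{t}$ for all $t$. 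Theorem~\ref{LLNThm} applied to $\tilde{N}_{t}$ then yields $\tilde{Y}_{t}\to\tilde{x}^{\ast}$ in probability; since $\tilde{x}^{\ast}>b$, $\mathbb{P}(Y_{t}\leq b)\leq\mathbb{P}(\tilde{Y}_{t}\leq b)\to 0$, and consequently $\mathbb{P}(N_{t}/t\in[a,b])\to 0$ (the difference between $N_{t}/t$ and $Y_{t}$ being asymptotically negligible).

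The principal obstacle is the explicit construction of $\tilde{\lambda}$ (and its dominate-from-above counterpart $\bar{\lambda}$ in Case~II) with all four properties simultaneously: combining the pointwise comparison with a Lipschitz constant strictly below one is delicate near the boundary $x=0$---most notably when $\lambda(0)=0$, so that $0$ is itself a fixed point of $\lambda$---and at infinity if $\lambda$ grows super-linearly (an issue in Case~II). In these borderline situations one exploits the $C^{1}$ regularity and the non-degeneracy of fixed points to perform the construction locally around the target fixed point, and relaxes the coupling so that the pointwise comparison need only hold on the range of values actually attained during the coupling, adjusting the initial condition of the auxiliary process if necessary.
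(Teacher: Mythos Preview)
Your approach contains a genuine gap that is not merely technical. You aim to show $\mathbb{P}(Y_{t}\leq b)\to 0$ by coupling with a globally dominated process $\tilde N_{t}$ whose rate function $\tilde\lambda$ has a unique fixed point $\tilde x^{\ast}>b$. But in the paper's setting there can be fixed points of $\lambda$ below $a$ (indeed, Theorem~\ref{StablePositive} asserts that stable ones attract mass), and then $\mathbb{P}(Y_{t}\leq b)$ does \emph{not} tend to zero; only $\mathbb{P}(Y_{t}\in[a,b])$ does. Worse, the four requirements on $\tilde\lambda$ are mutually incompatible in this situation: if $x_{0}<a$ is a fixed point of $\lambda$, then $\tilde\lambda(x_{0})\leq\lambda(x_{0})=x_{0}$; since $\tilde\lambda$ is $\alpha$-Lipschitz with $\alpha<1$, the map $x\mapsto\tilde\lambda(x)-x$ is strictly decreasing, so $\tilde\lambda(x)<x$ for all $x>x_{0}$, contradicting $\tilde\lambda(\tilde x^{\ast})=\tilde x^{\ast}$ with $\tilde x^{\ast}>b>x_{0}$. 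Thus no such $\tilde\lambda$ exists except in the degenerate case where $[a,b]$ lies below every fixed point. Your final paragraph flags the difficulty but treats it as a boundary effect near $x=0$; in fact it obstructs the whole construction whenever $[a,b]$ sits between two fixed points, which is the generic multi-fixed-point scenario the theorem is meant to cover.

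The paper's proof sidesteps this by arguing purely locally. Using continuity and monotonicity, it finds $\epsilon>0$ with $\lambda(x-\epsilon)>x+\epsilon$ uniformly for $x\in[a,b]$ (in Case~I). On the event that $N_{t-}/(t+1)$ remains in $(x-\epsilon,x+\epsilon)$ from some time on, the intensity is bounded below by the constant $\lambda(x-\epsilon)$, and a comparison with a homogeneous Poisson process of that rate forces $N_{t-}/(t+1)\to\lambda(x-\epsilon)>x+\epsilon$, a contradiction. This local comparison with a \emph{constant-rate} Poisson process, valid only while the process sits in the interval, replaces your global $\tilde\lambda$ and avoids any conflict with fixed points outside $[a,b]$. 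If you want to salvage your coupling idea, you would need to restart the auxiliary process each time $Y_{t}$ enters $[a,b]$ and argue about exit from $[a,b]$ rather than about $\{Y_{t}\leq b\}$; at that point the argument essentially reduces to the paper's.
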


\begin{theorem}\label{StablePositive}
Let $x^*$ be any stable fixed point of $\lambda(x)$: $x^* = \lambda(x^*)$ and $0<\lambda'(x^*)<1$, 
the probability that $\frac{N_t}{t}\to x^*$ is non-zero.
\end{theorem}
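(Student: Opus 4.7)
Set $Y_t:=N_t/(t+1)$. The plan has two stages: a \emph{positioning} step and a \emph{trapping} step, combined through the Markov property. For positioning, $N_t$ is a counting process with positive, locally bounded intensity, and a standard computation of the joint density of the jump times yields $\mathbb{P}(N_{t_0}=k)>0$ for every sufficiently large integer $k$ and every $t_0>0$. Taking $k=\lfloor x^*(t_0+1)\rfloor$ with $t_0$ large then ensures $|Y_{t_0}-x^*|\le 1/(t_0+1)$ on an event of positive probability.

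\textbf{Trapping via log-time change.} Perform the time change $s=\log(t+1)$. Writing $\tilde Y_s:=Y_{e^s-1}$, a direct computation yields
\[
d\tilde Y_s=\bigl(\lambda(\tilde Y_s)-\tilde Y_s\bigr)\,ds+d\hat M_s,
\]
where $\hat M$ is a martingale with $d\langle\hat M\rangle_s=e^{-s}\lambda(\tilde Y_s)\,ds$. The deterministic ODE $\dot y=\lambda(y)-y$ has $x^*$ as an asymptotically stable fixed point since $\lambda'(x^*)<1$, and the noise vanishes exponentially in $s$. Choose $\delta>0$ so that $[x^*-\delta,x^*+\delta]$ contains no other fixed point and $\alpha_\delta:=\sup_{|y-x^*|\le\delta}\lambda'(y)<1$, and set $\tau:=\inf\{s\ge s_0:|\tilde Y_s-x^*|\ge\delta\}$. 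On $[s_0,\tau)$ the intensity $\lambda(\tilde Y_s)$ is bounded by a constant $C_\delta$, so the quadratic variation of $\hat M$ accumulated on $[s_0,\tau)$ is almost surely bounded by $C_\delta e^{-s_0}$, and Doob's maximal inequality applied to the stopped martingale makes its fluctuations arbitrarily small with probability close to $1$ when $s_0$ is large. With $V(y):=(y-x^*)^2$, the generator satisfies
\[
\mathcal{L}_s V(y)=2(y-x^*)\bigl(\lambda(y)-y\bigr)+e^{-s}\lambda(y)\le -2c\,V(y)+C_\delta e^{-s}
\]
for $|y-x^*|\le\delta$, where $c:=1-\alpha_\delta>0$. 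Multiplying by $e^{2cs}$, integrating, and combining with the smallness of $\hat M$ yields a Gronwall-type bootstrap showing that $\tau=\infty$ on an event of positive probability and that $V(\tilde Y_s)\to 0$ on that event.

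\textbf{Main obstacle.} The technical core is making the Lyapunov/Gronwall bootstrap precise enough to simultaneously obtain non-exit with positive probability and convergence on the non-exit event. The crucial observation is that in log-time the noise has total quadratic variation at most $C_\delta e^{-s_0}$ on $[s_0,\tau)$, placing us in the classical regime of a small random perturbation of an attracting fixed point; the desired positive-probability conclusion then follows and is combined with the positioning step via the Markov property at time $t_0$.
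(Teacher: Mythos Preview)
Your approach is correct and genuinely different from the paper's. Both proofs share the same architecture---a positioning step (reaching a neighborhood of $x^*$ with positive probability), a trapping step (staying there forever with positive probability), and a convergence step---glued together by the Markov property. The difference lies in how the trapping and convergence are obtained.

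The paper argues trapping by coupling: since $|\lambda'(x^*)|<1$, one can find $\epsilon,\delta>0$ with $\lambda\bigl((x^*-\epsilon-\delta,x^*+\epsilon+\delta)\bigr)\subset(x^*-\epsilon,x^*+\epsilon)$, so while $Y_t$ remains in the larger interval the process is sandwiched between two homogeneous Poisson processes with rates $x^*\pm\epsilon$; the strong law for those Poisson processes then shows the exit probability tends to $0$ as $t_0\to\infty$. For convergence, the paper does \emph{not} argue directly: once the process is trapped, it modifies $\lambda$ outside the interval so that $x^*$ becomes the unique fixed point and then invokes the earlier law of large numbers (Theorem~\ref{LLNThm}/\ref{StochDeter}).

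Your log-time substitution is a different and more self-contained mechanism. It turns the problem into a small random perturbation of the attracting ODE $\dot y=\lambda(y)-y$, with noise of total quadratic variation $O(e^{-s_0})$; the Gronwall/variation-of-constants argument (writing $Z_s-\,$martingale as an absolutely continuous process and integrating) yields both non-exit and $Z_s\to 0$ on the same event, without appealing to the global LLN or modifying $\lambda$. The paper's coupling is more elementary and avoids any It\^o calculus for jump processes; your route, by contrast, gives an explicit rate of contraction and does not rely on previously proved convergence theorems. One point to make explicit when you write it out: after integrating by parts, the representation $Z_s=e^{-\int_{s_0}^s\kappa}\,Z_{s_0}+m_s-\int_{s_0}^s m_v\kappa_v e^{-\int_v^s\kappa}\,dv$ together with $m_s\to m_\infty$ (martingale convergence, finite bracket) cleanly gives $Z_s\to 0$ on the trapping event; this is the step where your sketch is thinnest.
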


\begin{theorem}\label{UnstablePositive}
Let $x^*$ be any unstable fixed point of $\lambda(x)$: $x^* = \lambda(x^*)$ and $\lambda'(x^*)>1$, 
the probability that $\frac{N_t}{t}\to x^*$ is zero.
\end{theorem}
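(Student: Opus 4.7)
The plan would be a contradiction argument in the spirit of Pemantle's theorem on nonconvergence of stochastic approximations to unstable equilibria. Suppose $\mathbb{P}(N_t/t \to x^*) > 0$; equivalently, $\mathbb{P}(Y_t \to x^*) > 0$ where $Y_t := N_t/(t+1)$. Writing $Z_t := Y_t - x^*$ and $\beta := \lambda'(x^*) - 1 > 0$, by $\sigma$-additivity there exist $T_0 \in \mathbb{N}$ and $\epsilon_0 > 0$ such that the event
\begin{equation*}
B := \{|Z_t| < \epsilon_0 \text{ for all } t \geq T_0\}
\end{equation*}
has positive probability. I would shrink $\epsilon_0$ so that the Taylor expansion $f(y) := \lambda(y) - y = \beta(y-x^*) + g(y-x^*)$ holds with $|g(z)| \leq C z^2$ on $(x^* - \epsilon_0, x^* + \epsilon_0)$, and so that $\lambda(x^* - \epsilon_0) > 0$.

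Next I would introduce the rescaled deviation $\zeta_t := (t+1)^{-\beta} Z_t$, chosen so that the linear drift is absorbed by the integrating factor $(t+1)^{\beta}$. With the stopping time $\tau := \inf\{t > T_0 : |Z_t| \geq \epsilon_0\}$ (so that $\tau = \infty$ on $B$), a direct Itô computation using $dY_t = (\lambda_t - Y_{t-})(t+1)^{-1}\,dt + (t+1)^{-1}\,dM_t$ gives
\begin{equation*}
\zeta_{t\wedge \tau} - \zeta_{T_0} = \int_{T_0}^{t \wedge \tau}\frac{g(Z_{s-})}{(s+1)^{\beta+1}}\,ds + \int_{T_0}^{t \wedge \tau}\frac{dM_s}{(s+1)^{\beta+1}}.
\end{equation*}
On $\{s < \tau\}$, $|g(Z_{s-})| \leq C \epsilon_0^2$ and $\lambda_s \leq \lambda(x^*+\epsilon_0)$, so the drift integral converges absolutely and the local martingale has finite predictable quadratic variation $\int_{T_0}^{\infty} (s+1)^{-2\beta-2}\lambda(x^*+\epsilon_0)\,ds < \infty$. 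Hence $\zeta_{t \wedge \tau}$ converges almost surely and in $L^2$ to some $\zeta_\infty^{\tau}$; on $B$, since $|Z_t| < \epsilon_0$ while $(t+1)^{\beta} \to \infty$, this limit must equal $0$.

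The heart of the argument, and the main obstacle, is then to show that $\{\zeta_\infty^{\tau} = 0\} \cap B$ has probability zero. Rearranging the identity at $t = \infty$ shows that, on $B$, the $\mathcal{F}_{T_0}$-measurable quantity $\zeta_{T_0}$ is forced to equal a specific functional of the future jump pattern of $N$. The lower bound $\lambda_s \geq \lambda(x^* - \epsilon_0) > 0$ ensures the martingale tail $\int_{T_0}^\infty \mathbf{1}_{s<\tau}(s+1)^{-\beta-1}\,dM_s$ has strictly positive conditional variance given $\mathcal{F}_{T_0}$ on $B$, so heuristically the chance of it exactly cancelling the $\mathcal{F}_{T_0}$-measurable side is zero. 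Making this rigorous requires an inductive or coupling argument showing that the conditional distribution of the martingale tail, given the finitely many jumps up to any time, has no atom at the required value. This is standard for Poisson-type integrals where jump times have a density and enter with strictly distinct decreasing weights $(s+1)^{-\beta-1}$; it can be formalized either within the Pemantle (1990) non-attraction framework, or by analyzing the characteristic function of the tail martingale conditionally on $\mathcal{F}_{T_0}$ and noting that it cannot be purely oscillatory. Either approach yields the contradiction $\mathbb{P}(B) = 0$, and hence $\mathbb{P}(N_t/t \to x^*) = 0$.
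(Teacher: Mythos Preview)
Your approach is genuinely different from the paper's. Rather than a Pemantle-style integrating-factor argument, the paper compares with the explicitly solvable linear model. It introduces an auxiliary process $\hat{Y}_t$ whose rate function is $\lambda(\cdot)$ outside a small neighbourhood $C$ of $x^*$ and the identity map inside $C$; since at a strictly unstable point $\lambda(x)\geq x$ for $x\in C$ above $x^*$ and $\lambda(x)\leq x$ for $x\in C$ below it, the original process is repelled from $x^*$ at least as strongly as $\hat{Y}$, and a coupling gives $\mathbb{P}(\lim Y_t=x^*)\leq\mathbb{P}(\lim\hat{Y}_t=x^*)$. Inside $C$, the process $\hat{Y}$ has the same dynamics as the process with intensity $(\tilde N_{t-}+\gamma)/(t+1)$, for which Proposition~\ref{GammaLimit} shows the rescaled limit is a gamma random variable. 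Since the gamma law has no atoms, $\mathbb{P}(\lim\hat{Y}_t=x^*)=0$ and the theorem follows.

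Your route has the advantage of not relying on an explicitly solvable comparison model, so it is more portable. However, the step you yourself flag as the main obstacle --- showing that $\{\zeta_\infty^\tau=0\}\cap B$ is null --- is only sketched. The heuristic is correct, but both the nonlinear drift term $\int g(Z_{s-})(s+1)^{-\beta-1}\,ds$ and the stopping time $\tau$ depend on the same jump history as the martingale, so one cannot simply condition on $\mathcal F_{T_0}$ and treat the tail as an independent Poisson integral with a continuous law; an actual inductive or conditional-density argument in the spirit of Pemantle is required and is not supplied. The paper sidesteps this difficulty entirely by outsourcing the atomlessness question to the already-computed gamma limit.
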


\subsection{Large Deviations}\label{LDPSection}

Before we proceed, 
recall that a sequence $(P_{n})_{n\in\mathbb{N}}$ of probability measures on a topological space $\mathbb{X}$ 
satisfies the large deviation principle with rate function $\mathcal{I}:\mathbb{X}\rightarrow\mathbb{R}$ if $\mathcal{I}$ is non-negative, 
lower semicontinuous and for any measurable set $A$, we have
\begin{equation}
-\inf_{x\in A^{o}}\mathcal{I}(x)\leq\liminf_{n\rightarrow\infty}\frac{1}{n}\log P_{n}(A)
\leq\limsup_{n\rightarrow\infty}\frac{1}{n}\log P_{n}(A)\leq-\inf_{x\in\overline{A}}\mathcal{I}(x).
\end{equation}
Here, $A^{o}$ is the interior of $A$ and $\overline{A}$ is its closure. 
We refer to the books by Dembo and Zeitouni \cite{Dembo} and Varadhan \cite{VaradhanII}
and the survey paper by Varadhan \cite{Varadhan} 
for general background of the theory and the applications
of large deviations.

\begin{theorem}\label{LDPThm}
Assume that $\lambda(\cdot)\leq C_{0}<\infty$
and $\lambda(\cdot)$ is $\gamma$-Lipschitz for some $0<\gamma<\infty$.
$\mathbb{P}(\frac{N_{\cdot T}}{T}\in\cdot)$ satisfies a sample path large deviations
on $D[0,1]$ equipped with uniform topology with the rate function
\begin{equation}
I(f)=\int_{0}^{1}\log\left(\frac{f'(\alpha)}{\lambda(f(\alpha)/\alpha)}\right)f'(\alpha)d\alpha
-\int_{0}^{1}\left[f'(\alpha)-\lambda(f(\alpha)/\alpha)\right]d\alpha.
\end{equation}
\end{theorem}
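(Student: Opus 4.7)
The shape of $I(f)$ is the canonical Poisson relative entropy $\int_{0}^{1}[\nu\log(\nu/\mu)-\nu+\mu]\,d\alpha$ with target density $\nu=f'(\alpha)$ and reference rate $\mu=\lambda(f(\alpha)/\alpha)$, so the heuristic is that $Y^{T}_{\alpha}:=N_{\alpha T}/T$ behaves like a rescaled inhomogeneous Poisson process whose self-consistent local rate at scaled time $\alpha$ is $\lambda(Y^{T}_{\alpha}/\alpha)$. The plan follows the standard three-step LDP: establish exponential tightness in $D[0,1]$ with the uniform topology, prove matching local upper and lower bounds around each absolutely continuous $f$ with $f(0)=0$ and $0\le f'\le C_{0}$, and then assemble them via the Dembo--Zeitouni criterion. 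Exponential tightness is free because $\lambda\le C_{0}$ stochastically dominates $N$ by a rate-$C_{0}$ Poisson process, for which exponential estimates on the uniform modulus of continuity are classical; the same bound forces $I(f)=+\infty$ on paths that are not absolutely continuous or whose derivative exceeds $C_{0}$ on a positive-measure set.

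For the local bounds the workhorse is the Dol\'eans exponential
\begin{equation*}
\mathcal{E}^{\theta}_{t}=\exp\!\Bigl(\int_{0}^{t}\theta_{s}\,dN_{s}-\int_{0}^{t}(e^{\theta_{s}}-1)\lambda_{s}\,ds\Bigr),
\end{equation*}
which is a mean-one martingale for any bounded predictable $\theta$. For the lower bound I take $\theta_{s}=\log(Tf'(s/T)/\lambda_{s})$; under the tilted measure $P^{f}:=\mathcal{E}^{\theta}_{T}\cdot P$ the process $N$ has predictable intensity $Tf'(s/T)$, so a Doob-type bound on the compensated martingale yields $P^{f}(\|Y^{T}-f\|_{\infty}<\varepsilon)\to 1$. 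On that event, $N_{s-}/(s+1)$ is uniformly close to $f(s/T)/(s/T)$, and Lipschitz continuity of $\lambda$ lets me replace the random $\lambda_{s}$ by the deterministic $\lambda(f(s/T)/(s/T))$ in $dP/dP^{f}$ up to $o(1)$, giving $\tfrac{1}{T}\log(dP/dP^{f})\to -I(f)$ and hence $\liminf_{T}\tfrac{1}{T}\log P(\|Y^{T}-f\|_{\infty}<\varepsilon)\ge -I(f)-o_{\varepsilon}(1)$. For the upper bound I apply exponential Chebyshev with $\mathcal{E}^{\theta}$ for $\theta$ piecewise constant on a fine partition of $[0,1]$, again freeze $\lambda_{s}$ to $\lambda(f(s/T)/(s/T))$ on $\{\|Y^{T}-f\|_{\infty}<\varepsilon\}$ by Lipschitz continuity, and optimize over tilts via the Legendre duality $\sup_{\theta}(\theta\nu-(e^{\theta}-1)\mu)=\nu\log(\nu/\mu)-\nu+\mu$ to recover $I(f)$. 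Non-negativity and lower semicontinuity of $I$ drop out of this convex-duality representation.

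The main obstacle is precisely the self-dependence $\lambda_{s}=\lambda(N_{s-}/(s+1))$: in both directions the intensity appearing in the Radon--Nikodym derivative is not the deterministic reference $\lambda(f(\alpha)/\alpha)$ but a functional of the tilted process itself. The substitution $\lambda_{s}\approx\lambda(f(s/T)/(s/T))$ is legitimate only on $\{\|Y^{T}-f\|_{\infty}<\varepsilon\}$, and one must show that this event carries the full exponential mass under $P^{f}$. This requires a quantitative uniform-in-$s$ concentration estimate for $|N_{s}-Tf(s/T)|/T$ under $P^{f}$, obtained by a Grönwall-type argument driven by the tilted compensated martingale and crucially using $|\lambda(x)-\lambda(y)|\le\gamma|x-y|$ to close the feedback loop between the intensity and the path. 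It is at this self-consistency step that both hypotheses of the theorem---boundedness of $\lambda$ and its Lipschitz continuity---are genuinely used.
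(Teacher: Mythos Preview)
Your plan is essentially the paper's own proof: exponential tightness by comparison with a rate-$C_{0}$ Poisson process, a local upper bound via the exponential martingale plus Chebyshev with piecewise constant tilts and Legendre duality, and a local lower bound by change of measure followed by a law-of-large-numbers computation of the Radon--Nikodym cost. Two points need cleaning up.

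First, a scaling slip: if $Y^{T}_{\alpha}=N_{\alpha T}/T$ is to track $f(\alpha)$, then $N_{s}\approx Tf(s/T)$ and the target intensity is $\tfrac{d}{ds}Tf(s/T)=f'(s/T)$, not $Tf'(s/T)$. Your tilt should be $\theta_{s}=\log\bigl(f'(s/T)/\lambda_{s}\bigr)$. Second, and more importantly, your final paragraph mislocates the difficulty. With this $\theta$, the tilted intensity $e^{\theta_{s}}\lambda_{s}=f'(s/T)$ is \emph{deterministic}: under $P^{f}$ the process $N$ is an honest inhomogeneous Poisson process, so $P^{f}(\|Y^{T}-f\|_{\infty}<\varepsilon)\to 1$ is a straight Poisson functional LLN (Doob on the compensated martingale with bounded intensity), and there is no ``feedback loop'' to close with Gr\"onwall. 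The Lipschitz hypothesis is used elsewhere---namely, on the good event, to replace the random $\lambda_{s}=\lambda(N_{s-}/(s+1))$ by $\lambda(f(\alpha)/\alpha)$ inside $\log(dP/dP^{f})$, which is a direct pointwise bound $|\lambda_{s}-\lambda(f(\alpha)/\alpha)|\le\gamma\,|N_{s-}/(s+1)-f(\alpha)/\alpha|$. The one place to be careful is near $\alpha=0$, where $|N_{s-}/(s+1)-f(\alpha)/\alpha|$ is not controlled by $\|Y^{T}-f\|_{\infty}$ alone; here the boundedness $\lambda\le C_{0}$ (not the Lipschitz constant) saves you, since the contribution of $[0,\delta]$ to both the RN derivative and to $I(f)$ is $O(\delta)$. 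The paper sidesteps part of this by tilting to piecewise-constant intensities on a fixed partition and invoking Jensen rather than a pathwise freezing argument, but the mechanism is the same.
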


By contraction principle, we get the following scalar large deviation principle.

\begin{corollary}
$\mathbb{P}(N_{t}/t\in\cdot)$ satisfies a large deviation principle with rate function
\begin{equation}
I(x):=\inf_{f\in\mathcal{AC}_{0}^{+}[0,1],f(1)=x}
\int_{0}^{1}\log\left(\frac{f'(\alpha)}{\lambda(f(\alpha)/\alpha)}\right)f'(\alpha)d\alpha
-\int_{0}^{1}\left[f'(\alpha)-\lambda(f(\alpha)/\alpha)\right]d\alpha.
\end{equation}
\end{corollary}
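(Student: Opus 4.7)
The plan is to deduce the corollary directly from Theorem \ref{LDPThm} via the contraction principle. Consider the evaluation map $\pi : D[0,1] \to \mathbb{R}$ given by $\pi(f) := f(1)$. The estimate $|\pi(f)-\pi(g)| = |f(1)-g(1)| \leq \|f-g\|_{\infty}$ shows that $\pi$ is (Lipschitz) continuous with respect to the uniform topology on $D[0,1]$. Since the scalar random variable $N_{t}/t$ is precisely the image under $\pi$ of the rescaled sample path $\{N_{\alpha t}/t\}_{\alpha \in [0,1]}$, the contraction principle (see e.g.\ Dembo and Zeitouni \cite{Dembo}) immediately yields a large deviation principle for $\mathbb{P}(N_{t}/t \in \cdot)$ with rate function
\begin{equation*}
\widetilde{I}(x) \;=\; \inf\bigl\{ I(f) : f \in D[0,1],\ f(1)=x \bigr\},
\end{equation*}
where $I$ is the sample-path rate function supplied by Theorem \ref{LDPThm}.

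The remaining task is only to identify the effective domain of the minimization. The integrand in the definition of $I(f)$ requires $f$ to be absolutely continuous with an a.e.\ derivative $f'$ making the logarithm meaningful; since the underlying paths $N_{\cdot t}/t$ are nondecreasing and vanish at the origin, one extends $I$ by $+\infty$ outside $\mathcal{AC}_{0}^{+}[0,1]$, the class of nondecreasing absolutely continuous paths starting at $0$. Consequently the infimum above may be restricted to $f \in \mathcal{AC}_{0}^{+}[0,1]$ with $f(1)=x$ without changing its value, which produces exactly the formula stated in the corollary.

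I do not expect a genuine obstacle here: the only substantive input beyond Theorem \ref{LDPThm} itself is the continuity of the evaluation map $\pi$, and the key qualitative properties of $\widetilde{I}$ (non-negativity and lower semicontinuity) are inherited for free from the corresponding properties of the good rate function $I$ via the contraction principle. The proof is therefore essentially a one-line invocation of that principle, with the minor bookkeeping described above to rewrite the infimum over $D[0,1]$ as an infimum over the physically relevant class $\mathcal{AC}_{0}^{+}[0,1]$.
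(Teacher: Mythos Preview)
Your proposal is correct and matches the paper's own argument: the corollary is stated immediately after Theorem \ref{LDPThm} with the single remark ``By contraction principle, we get the following scalar large deviation principle,'' and your explicit verification that the evaluation map $f\mapsto f(1)$ is continuous in the uniform topology, together with the restriction of the infimum to $\mathcal{AC}_{0}^{+}[0,1]$, simply spells out what that sentence means.
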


\begin{remark}
It would be interesting to see if one can relax the assumption $\lambda(\cdot)\leq C_{0}<\infty$.
This may not be easy or even possible. For instance, if $\lambda(z)\geq\alpha z$ for some $\alpha>0$, then
for any $\theta>0$, by \eqref{ExampleFormula}, we have
\begin{equation}
\mathbb{E}[e^{\theta N_{t}}]
\geq
\begin{cases}
\left(\frac{\frac{1}{(t+1)^{\alpha}}}{1-\left(1-\frac{1}{(t+1)^{\alpha}}\right)e^{\theta}}\right)^{\gamma}
&\text{if $t<(1-e^{-\theta})^{-\frac{1}{\alpha}}-1$}
\\
\infty &\text{if $t\geq (1-e^{-\theta})^{-\frac{1}{\alpha}}-1$}
\end{cases}.
\end{equation}
Thus for any $\theta>0$, $\mathbb{E}[e^{\theta N_{t}}]=\infty$ for sufficiently large $t$.
\end{remark}

\begin{remark}
Let us define 
\begin{equation}
L(\alpha,f(\alpha),f'(\alpha))
=\log\left(\frac{f'(\alpha)}{\lambda(f(\alpha)/\alpha)}\right)f'(\alpha)
-\left[f'(\alpha)-\lambda(f(\alpha)/\alpha)\right].
\end{equation}
Thus, we are interested to optimize $\int_{0}^{1}L(\alpha,f(\alpha),f'(\alpha))d\alpha$
subject to the constraints $f(0)=0$ and $f(1)=x$. We can write down the Euler-Lagrange equation
\begin{align}
0&=\frac{\partial L}{\partial f}-\frac{d}{d\alpha}\frac{\partial L}{\partial f'}
\\
&=-\frac{\lambda'(f(\alpha)/\alpha)}{\alpha\lambda(f(\alpha)/\alpha)}
+\frac{\lambda'(f(\alpha)/\alpha)}{\alpha}
-\frac{d}{d\alpha}\left[\log(f'(\alpha))-f'(\alpha)\log\lambda\left(\frac{f(\alpha)}{\alpha}\right)\right].
\nonumber
\end{align}
\end{remark}

\begin{remark}\label{ZeroRemark}
If $x=\lambda(x)$, by letting $f(\alpha)=\alpha x$, $0\leq\alpha\leq 1$, we can easily
check that $I(x)=0$.
\end{remark}

\begin{proposition}\label{ZeroProp}
The converse of Remark \ref{ZeroRemark} is also true. In other words, if $I(x)=0$, then $x$ must be a fixed point of $\lambda(x)$ 
and the minimizer is $f(\alpha) = \alpha x$.
\end{proposition}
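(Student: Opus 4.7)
The strategy is the standard ``zero of a non-negative Lagrangian'' argument. The key observation is that the integrand
\[
L(\alpha,f,f') = f'\log\!\left(\frac{f'}{\lambda(f/\alpha)}\right) - f' + \lambda(f/\alpha)
\]
is the Kullback--Leibler divergence between the two Poisson laws of rates $f'(\alpha)$ and $\lambda(f(\alpha)/\alpha)$; consequently $L\geq 0$ pointwise, with equality if and only if $f'(\alpha)=\lambda(f(\alpha)/\alpha)$. First I would invoke the lower semicontinuity of $I$ and compactness of its sub-level sets (both available from the $\lambda\leq C_{0}$ and Lipschitz hypotheses that underlie Theorem \ref{LDPThm}) to extract a minimizer $f^{*}\in\mathcal{AC}_{0}^{+}[0,1]$ with $f^{*}(0)=0$ and $f^{*}(1)=x$. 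The hypothesis $I(x)=0$ then forces $\int_{0}^{1}L\,d\alpha=0$, and since $L\geq 0$ this yields the first-order ODE $(f^{*})'(\alpha)=\lambda(f^{*}(\alpha)/\alpha)$ almost everywhere on $(0,1]$.

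The second step is to analyze this ODE under the boundary constraint $f^{*}(0)=0$. Setting $u(\alpha):=f^{*}(\alpha)/\alpha$ and reparametrizing by $s:=\log\alpha$, the identity $f^{*\prime}=u+\alpha u'$ turns the ODE into the autonomous scalar system
\[
\dot u(s) = \lambda(u(s)) - u(s), \qquad s\in(-\infty,0],
\]
whose equilibria are exactly the fixed points of $\lambda$. Because $(f^{*})'=\lambda(u)\leq C_{0}$, the orbit $u$ is bounded on $(-\infty,0]$, and the standard asymptotic theory of one-dimensional autonomous flows with finitely many hyperbolic equilibria forces $u(\alpha)\to x^{*}$ as $\alpha\to 0^{+}$ for some fixed point $x^{*}=\lambda(x^{*})$.

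The final step is to upgrade this to $u\equiv x^{*}$ on all of $(0,1]$, whence $x=u(1)=x^{*}$ and the minimizer is $f^{*}(\alpha)=\alpha x^{*}=\alpha x$. At a strictly stable equilibrium ($\lambda'(x^{*})<1$) this is immediate: the linearization $\dot u\approx(\lambda'(x^{*})-1)(u-x^{*})$ has negative eigenvalue, so $x^{*}$ is a backward-time repeller for the autonomous flow, and the only bounded orbit with $u(-\infty)=x^{*}$ is the equilibrium itself. The main obstacle is the strictly unstable case $\lambda'(x^{*})>1$, where the linearization formally permits a one-parameter family of non-constant backward-asymptotic orbits $u(\alpha)=x^{*}+A\alpha^{\lambda'(x^{*})-1}+\cdots$, each of which is automatically admissible in $\mathcal{AC}_{0}^{+}[0,1]$. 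Ruling these branches out is the heart of the argument; I expect to do so via a perturbation/comparison argument that exploits the strict monotonicity of $\lambda$ together with the admissibility constraints $f^{*}(1)=x$, $f^{*}\geq 0$, $(f^{*})'\geq 0$, to show that any non-constant choice of $u$ violates one of these conditions and therefore fails to be an admissible minimizer. Once uniqueness of the constant orbit is secured, the proposition follows.
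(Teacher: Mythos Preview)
Your first two steps---extracting a minimizer via goodness of the rate function, deducing the pointwise identity $f'(\alpha)=\lambda(f(\alpha)/\alpha)$ from $L\ge 0$, and passing to the autonomous picture $\dot u=\lambda(u)-u$ with $s=\log\alpha$---are correct and in fact cleaner than what the paper does. The paper reaches the same ODE and the same conclusion $f'(0)=\lambda(f'(0))$, but then argues uniqueness by an Euler discretization instead of your dynamical-systems reformulation.

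The gap is exactly where you place it, and it cannot be closed by the admissibility constraints you propose. Suppose $x_{2}$ is a strictly unstable fixed point with stable neighbours $x_{1}<x_{2}<x_{3}$. For every $y\in(x_{1},x_{3})\setminus\{x_{2}\}$ there is an orbit of $\dot u=\lambda(u)-u$ with $u(-\infty)=x_{2}$ and $u(0)=y$. The associated path $f(\alpha)=\alpha\,u(\log\alpha)$ satisfies $f(0)=0$, $f'(\alpha)=\lambda(u(\log\alpha))\in[0,C_{0}]$, so $f\in\mathcal{AC}_{0}^{+}[0,1]$ with $f(1)=y$, and it solves $f'=\lambda(f/\alpha)$ exactly, giving $\int_{0}^{1}L\,d\alpha=0$ and hence $I(y)=0$. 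None of the constraints $f\ge 0$, $f'\ge 0$, $f(1)=x$ is violated. The one-parameter family $u(\alpha)=x_{2}+A\alpha^{\lambda'(x_{2})-1}+\cdots$ you yourself wrote down consists of genuinely admissible competitors, so a ``perturbation/comparison'' argument based on those constraints cannot exclude them.

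Consequently the statement, read under the standing hypotheses of the LDP section (bounded, Lipschitz $\lambda$, possibly with unstable fixed points), appears to fail; it is only secure in the regime where every fixed point is stable (e.g.\ the $\alpha$-Lipschitz, $\alpha<1$, setting used elsewhere, which forces a unique stable fixed point). The paper's Euler-method argument conceals the same issue: the scheme with first step $f_{1}=\Delta\alpha\cdot\lambda(f'(0))$ reproduces the linear solution, but invoking convergence of Euler to \emph{the} solution presupposes uniqueness of the initial-value problem at the singular point $\alpha=0$, which is precisely what breaks down when an unstable equilibrium is present. Your diagnosis of the obstacle is sharper than the paper's treatment, but the obstacle is genuine and your proposed remedy does not remove it.
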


\begin{remark}
Let $x^{\ast}$ be any unstable fixed point of $x\mapsto\lambda(x)$ and $C$ be any sufficiently small neighborhood
containing $x^{\ast}$. From Theorem \ref{NoFixed} and Theorem \ref{UnstablePositive}, 
we have $\mathbb{P}(\frac{N_{t}}{t}\in C)\rightarrow 0$ as $t\rightarrow\infty$. On the other hand, from Proposition \ref{ZeroProp}, 
we have $I(x^{\ast})=0$, which implies that $\mathbb{P}(\frac{N_{t}}{t}\in C)$ has subexponential decay in time as $t\rightarrow\infty$.
This is consistent with our simulations results which illustrate that when there is a configuration in which $\frac{N_{t}}{t}$ is in a small neighborhood
of $x^{\ast}$, it takes a very long time for the process to exit the neighborhood.
\end{remark}

\subsection{Time Asymptotics in Different Regimes}\label{TimeAsympSection}

When $\lambda(\cdot)$ is $\alpha$-Lipschitz with $0<\alpha<1$, we know that there is a unique
fixed point to $x=\lambda(x)$ and $\frac{N_{t}}{t}$ converges to this unique fixed point as $t\rightarrow\infty$.
In general, $x=\lambda(x)$ may not have any fixed points. If that's the case, then what should be the correct
scaling for $N_{t}$, $\mathbb{E}[N_{t}]$, $\text{Var}[N_{t}]$ etc.? In this section, we study
the different time asymptotics for different regimes. 

\begin{proposition}\label{Expectation}
Assume $\lambda(z)=\beta+\alpha z$, $\alpha,\beta>0$ and $\alpha\neq 1$.

(i)
The expectation is given by
\begin{equation}\label{ExpFormula}
\mathbb{E}[N_{t}]=(t+1)\frac{\beta}{1-\alpha}\left[1-(t+1)^{\alpha-1}\right],
\end{equation}
and thus for $\alpha<1$
\begin{equation}
\lim_{t\rightarrow\infty}\frac{\mathbb{E}[N_{t}]}{t}=\frac{\beta}{1-\alpha},
\end{equation}
and for $\alpha>1$
\begin{equation}
\lim_{t\rightarrow\infty}\frac{\mathbb{E}[N_{t}]}{t^{\alpha}}=\frac{\beta}{\alpha-1}.
\end{equation}

(ii) The variance is given by
\begin{equation}\label{VarFormula}
\text{Var}[N_{t}]=(t+1)^{2}
\bigg[\frac{\frac{\beta}{(1-2\alpha)(1-\alpha)}}{t+1}+\frac{\frac{\beta}{1-\alpha}}{(t+1)^{2-\alpha}}
+\frac{-\frac{2\beta}{1-2\alpha}}{(t+1)^{2(1-\alpha)}}\bigg].
\end{equation}

For $0<\alpha<\frac{1}{2}$, we have
\begin{equation}
\lim_{t\rightarrow\infty}\frac{\text{Var}[N_{t}]}{t}=\frac{\beta}{(1-2\alpha)(1-\alpha)}.
\end{equation}

For $\alpha>\frac{1}{2}$, we have
\begin{equation}
\lim_{t\rightarrow\infty}\frac{\text{Var}[N_{t}]}{t^{2\alpha}}=\frac{2\beta}{2\alpha-1}.
\end{equation}
\end{proposition}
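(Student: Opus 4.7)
My plan is to exploit the linearity $\lambda_t = \beta + \alpha(N_{t-}+\gamma)/(t+1)$ (with $\gamma=0$, consistent with the stated formulas at $t=0$) and reduce everything to two first-order linear ODEs solved by integrating factors. For part (i), let $m(t)=\mathbb{E}[N_t]$. Since $M_t:=N_t-\int_0^t \lambda_s\,ds$ is a martingale and $\lambda_t$ is $\mathcal{F}_t$-predictable, $m(t)=\int_0^t \mathbb{E}[\lambda_s]\,ds$, which gives
\begin{equation*}
m'(t)=\beta+\frac{\alpha}{t+1}m(t),\qquad m(0)=0.
\end{equation*}
The integrating factor $(t+1)^{-\alpha}$ converts the left side into $\frac{d}{dt}[(t+1)^{-\alpha}m(t)]$; integrating and multiplying back through produces \eqref{ExpFormula} exactly. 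The asymptotics then follow by reading off the dominant power: for $\alpha<1$ the linear piece wins with coefficient $\beta/(1-\alpha)$, and for $\alpha>1$ the piece $(t+1)^\alpha\beta/(\alpha-1)$ dominates.

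For part (ii), I would apply It\^o's formula to $N_t^2$ via the jump identity $N_t^2-N_{t-}^2=2N_{t-}+1$, obtaining $\mathbb{E}[N_t^2]=\int_0^t \mathbb{E}[(2N_{s-}+1)\lambda_s]\,ds$. Writing $V(t)=\text{Var}[N_t]=\mathbb{E}[N_t^2]-m(t)^2$ and differentiating, subtracting $2m(t)m'(t)$ cancels the $m(t)^2$ cross terms and leaves the linear ODE
\begin{equation*}
V'(t)=\frac{2\alpha}{t+1}V(t)+\beta+\frac{\alpha}{t+1}m(t),\qquad V(0)=0.
\end{equation*}
Plugging the closed form of $m(t)$ from part (i) into the right-hand side makes the inhomogeneity a finite sum of pure powers of $t+1$. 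The integrating factor $(t+1)^{-2\alpha}$ then reduces the ODE to elementary power integrals, and collecting terms yields \eqref{VarFormula} with the three exponents $1$, $2-\alpha$, $2(1-\alpha)$. The two asymptotic regimes in part (ii) follow by identifying the dominant power in each case.

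Nothing here is conceptually deep; the whole argument is a controlled ODE computation. The main care needed is in the variance bookkeeping: one must verify that the $m(t)^2$ cross terms cancel cleanly and that the three coefficients in \eqref{VarFormula} emerge with exactly the stated signs. The factor $(1-2\alpha)$ in the denominators is precisely why $\alpha=1/2$ is excluded from the hypothesis; a logarithmic correction would enter at that boundary if one wanted to include it.
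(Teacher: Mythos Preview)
Your proposal is correct and follows essentially the same strategy as the paper: reduce the first and second moments to first-order linear ODEs driven by the compensator identity, then solve. The only packaging difference is that the paper works with the normalized process $Y_t=N_t/(t+1)$ and solves for $m_1(t)=\mathbb{E}[Y_t]$ and $m_2(t)=\mathbb{E}[Y_t^2]$ separately (using a five-term ansatz for $m_2$ and matching coefficients), while you work with $\mathbb{E}[N_t]$ and $\text{Var}[N_t]$ directly and solve both ODEs by integrating factors. Your observation that the $m(t)^2$ cross terms cancel to give a clean linear ODE for $V(t)$ itself is a genuine simplification over the paper's computation of $m_2$ followed by subtraction; it avoids carrying the five undetermined constants and makes the appearance of the three powers $(t+1)$, $(t+1)^\alpha$, $(t+1)^{2\alpha}$ more transparent.
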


\begin{theorem}\label{CLTThm}
Assume that $\lambda(z)=\beta+\alpha z$, $\alpha,\beta>0$ and $\alpha<\frac{1}{2}$.
Then, we have the central limit theorem
\begin{equation}\label{CLTEqn}
\frac{N_{t}-\frac{\beta}{1-\alpha}t}{\sqrt{t}}\rightarrow N\left(0,\frac{\beta}{(1-2\alpha)(1-\alpha)}\right),
\end{equation}
in distribution as $t\rightarrow\infty$.
\end{theorem}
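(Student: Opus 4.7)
The plan is to exploit the linear structure to write $N_{t}$ minus its mean as a scaled martingale, then invoke a martingale central limit theorem.

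\textbf{Step 1 (linear SDE).} Let $M_{t}:=N_{t}-\int_{0}^{t}\lambda_{s}\,ds$ and $V_{t}:=N_{t}-\mathbb{E}[N_{t}]$. Substituting $\lambda_{s}=\beta+\alpha(N_{s-}+\gamma)/(s+1)$ and subtracting the ODE satisfied by $\mathbb{E}[N_{t}]$ gives the linear SDE
\begin{equation*}
dV_{t}=\alpha\frac{V_{t-}}{t+1}\,dt+dM_{t},\qquad V_{0}=0.
\end{equation*}
Applying the product rule with the continuous integrating factor $(t+1)^{-\alpha}$ cancels the drift and yields
\begin{equation*}
V_{t}=(t+1)^{\alpha}X_{t},\qquad X_{t}:=\int_{0}^{t}(s+1)^{-\alpha}\,dM_{s},
\end{equation*}
which is a purely discontinuous, square-integrable martingale whose jumps have absolute size bounded by $1$.

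\textbf{Step 2 (reduction).} From \eqref{ExpFormula}, $\mathbb{E}[N_{t}]-\beta t/(1-\alpha)=O(t^{\alpha})=o(\sqrt{t})$ when $\alpha<1/2$, so by Slutsky's lemma it suffices to prove $V_{t}/\sqrt{t}\Rightarrow N(0,\sigma^{2})$ with $\sigma^{2}:=\beta/[(1-\alpha)(1-2\alpha)]$. Setting $c_{t}:=t^{(1-2\alpha)/2}$, I can write
\begin{equation*}
\frac{V_{t}}{\sqrt{t}}=\frac{(t+1)^{\alpha}}{t^{\alpha}}\cdot\frac{X_{t}}{c_{t}},
\end{equation*}
and since the prefactor tends to $1$, the claim reduces to $X_{t}/c_{t}\Rightarrow N(0,\sigma^{2})$.

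\textbf{Step 3 (martingale CLT).} Apply the continuous-time martingale CLT (Rebolledo). The predictable quadratic variation of $X$ is
\begin{equation*}
\langle X,X\rangle_{t}=\int_{0}^{t}(s+1)^{-2\alpha}\lambda_{s}\,ds=\beta\int_{0}^{t}(s+1)^{-2\alpha}\,ds+\alpha\int_{0}^{t}(s+1)^{-2\alpha-1}(N_{s-}+\gamma)\,ds,
\end{equation*}
whose expectation, computed from \eqref{ExpFormula}, equals $\sigma^{2}c_{t}^{2}(1+o(1))$. The Lindeberg condition is immediate: every jump of $X$ has size at most $1$, so for any $\epsilon>0$ the event $\{|\Delta X_{s}|>\epsilon c_{t}\}$ is empty once $t$ is large enough. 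Upgrading the expectation convergence to convergence in probability via a variance estimate (using $\text{Var}[N_{s}]=O(s)$ from \eqref{VarFormula} together with a Fubini argument on the resulting double integral) furnishes the hypothesis $\langle X,X\rangle_{t}/c_{t}^{2}\to\sigma^{2}$ in probability, and the martingale CLT then yields \eqref{CLTEqn}.

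The main obstacle is the in-probability convergence of $\langle X,X\rangle_{t}/c_{t}^{2}$. The mean is a direct consequence of \eqref{ExpFormula}, but showing $\text{Var}[\langle X,X\rangle_{t}]=o(c_{t}^{4})$ requires the covariance structure of $N$, most cleanly extracted from the same generating-function computation that underlies \eqref{VarFormula}; everything else in the argument is routine.
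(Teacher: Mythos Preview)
Your martingale approach is correct and differs substantially from the paper's. The paper proves Theorem~\ref{CLTThm} via the Cox--Grimmett central limit theorem for \emph{associated} random variables: it observes that the increments $(N(n,n+1])_{n\geq 0}$ are associated (citing Kwieci\'{n}ski--Szekli for monotone self-exciting processes), checks the moment bounds, and then verifies the covariance-summability condition by plugging in the explicit formula of Proposition~\ref{CovProp} to get $\text{Cov}(N[t,t+1],N[s,s+1])\leq C(t+1)^{\alpha-1}$. Your route sidesteps association entirely: the integrating-factor identity $V_{t}=(t+1)^{\alpha}X_{t}$ is special to linear $\lambda$ and reduces the problem directly to Rebolledo's theorem, with the Lindeberg condition free from the bounded jumps. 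Both arguments ultimately lean on the explicit moment computations of Propositions~\ref{Expectation} and~\ref{CovProp}, but yours is more self-contained (no external association machinery, no discrete-to-continuous transfer) and makes the role of the constraint $\alpha<\tfrac12$ transparent through the scaling $c_{t}=t^{(1-2\alpha)/2}$. One small simplification: for the variance of $\langle X,X\rangle_{t}$ you do not actually need the full covariance structure---Cauchy--Schwarz together with $\text{Var}[N_{s}]=O(s)$ already yields $\text{Var}[\langle X,X\rangle_{t}]=O(t^{1-4\alpha}\vee 1)=o(c_{t}^{4})$. Conversely, the paper's association-based argument would in principle extend to monotone nonlinear $\lambda$ given adequate covariance decay, which your integrating-factor trick cannot.
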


\begin{proposition}\label{OneHalf}
For $\lambda(z)=\beta+\frac{1}{2}z$, $\beta>0$,
\begin{equation}
\text{Var}[N_{t}]=(t+1)^{2}\left[\frac{2\beta[\log(t+1)-1]}{t+1}+\frac{2\beta}{(t+1)^{3/2}}\right],
\end{equation}
and
\begin{equation}
\lim_{t\rightarrow\infty}\frac{\text{Var}[N_{t}]}{t\log t}=2\beta.
\end{equation}
\end{proposition}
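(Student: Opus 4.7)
The plan is to derive a first-order linear ODE for the second moment $v(t) := \mathbb{E}[N_t^2]$, following the same strategy used for $\mathbb{E}[N_t]$ in Proposition \ref{Expectation}. Since $N_t$ is a counting process with $\mathcal{F}_t$-predictable intensity $\lambda_t$, the Doob-Meyer decomposition gives that
$$N_t^2 - \int_0^t (2N_{s-}+1)\lambda_s\,ds$$
is a martingale (because the jumps of $N_t$ are of size $1$, so $\Delta(N^2)_t = (2N_{t-}+1)\Delta N_t$). With $\lambda(z)=\beta+z/2$ and writing $m(t):=\mathbb{E}[N_t]$, taking expectations yields
$$v'(t) \;=\; \mathbb{E}\!\left[(2N_t+1)\!\left(\beta+\tfrac{N_t}{2(t+1)}\right)\right] \;=\; \frac{v(t)}{t+1} + 2\beta\, m(t) + \beta + \frac{m(t)}{2(t+1)}.$$

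Next I would solve this ODE. It is linear of the form $v'(t) - v(t)/(t+1) = g(t)$, so the integrating factor is $(t+1)^{-1}$ and
$$\frac{v(t)}{t+1} \;=\; \int_0^t \frac{g(s)}{s+1}\,ds.$$
From Proposition \ref{Expectation} at $\alpha=1/2$, we have $m(t) = 2\beta(t+1) - 2\beta(t+1)^{1/2}$. Plugging into $g$, the integrand $g(s)/(s+1)$ becomes a linear combination of $1$, $(s+1)^{-1/2}$, $(s+1)^{-1}$, and $(s+1)^{-3/2}$ with coefficients $4\beta^2$, $-4\beta^2$, $2\beta$, and $-\beta$ respectively. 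Each of these integrates to an elementary or logarithmic function; the $(s+1)^{-1}$ term is precisely what produces the $\log(t+1)$ factor that is absent in the $\alpha\neq 1/2$ case.

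Finally I would compute $\text{Var}[N_t] = v(t) - m(t)^2$. Using $m(t)^2 = 4\beta^2(t+1)^2 - 8\beta^2(t+1)^{3/2} + 4\beta^2(t+1)$, the quadratic and $(t+1)^{3/2}$ contributions in $v(t)$ cancel exactly against $m(t)^2$, and the remaining linear-in-$(t+1)$ terms combine to $-2\beta(t+1)$. What survives is the claimed formula
$$\text{Var}[N_t] \;=\; 2\beta(t+1)[\log(t+1)-1] + 2\beta(t+1)^{1/2},$$
from which $\text{Var}[N_t]/(t\log t)\to 2\beta$ follows immediately. The only real obstacle is bookkeeping the cancellations in this borderline case $\alpha=1/2$: one must verify that the polynomial pieces cancel while the logarithmic term has the correct nonzero coefficient $2\beta$. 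No new conceptual input beyond Proposition \ref{Expectation} is required.
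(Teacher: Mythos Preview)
Your proposal is correct and follows essentially the same route as the paper: derive a first-order linear ODE for the second moment, solve it explicitly (picking up the $\log(t+1)$ term that is special to $\alpha=\tfrac12$), and subtract $m(t)^2$. The only cosmetic difference is that the paper works with $m_2(t)=\mathbb{E}[Y_t^2]$ for $Y_t=N_t/(t+1)$ and solves via an ansatz with undetermined coefficients, whereas you work directly with $v(t)=\mathbb{E}[N_t^2]$ and use the integrating factor $(t+1)^{-1}$; the two computations are related by $v(t)=(t+1)^2 m_2(t)$ and lead to identical cancellations.
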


\begin{proposition}\label{One}
For $\lambda(z)=\beta+z$, $\beta>0$,
\begin{equation}
\mathbb{E}[N_{t}]=\beta(t+1)\log(t+1),
\end{equation}
and
\begin{equation}
\text{Var}[N_{t}]=-\beta(t+1)\log(t+1)+2\beta t(t+1)
\end{equation}
and
\begin{equation}
\lim_{t\rightarrow\infty}\frac{\text{Var}[N_{t}]}{t^{2}}=2\beta.
\end{equation}
\end{proposition}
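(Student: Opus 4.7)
The plan is to adapt the strategy that underlies Proposition \ref{Expectation} to the critical exponent $\alpha=1$, which is precisely the value at which the $\tfrac{1}{1-\alpha}$ coefficients there blow up and where logarithmic corrections are expected to appear.

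First I would derive and solve a linear ODE for $m(t):=\mathbb{E}[N_t]$. Because $M_t=N_t-\int_0^t\lambda_s\,ds$ is a martingale and $\lambda_s=\beta+\tfrac{N_{s-}}{s+1}$ (taking $\gamma=0$ as is implicit throughout Section \ref{TimeAsympSection}), taking expectations and differentiating gives
\begin{equation*}
m'(t)-\frac{m(t)}{t+1}=\beta.
\end{equation*}
With integrating factor $(t+1)^{-1}$ this becomes $\frac{d}{dt}\bigl[\tfrac{m(t)}{t+1}\bigr]=\tfrac{\beta}{t+1}$, which on integrating with $m(0)=0$ produces the claimed $m(t)=\beta(t+1)\log(t+1)$. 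In the subcritical cases $\alpha\neq 1$ of Proposition \ref{Expectation} the integrating factor is $(t+1)^{-\alpha}$ and integrates to a power of $t+1$; the logarithm here is the signature of the critical case $\alpha=1$.

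Next I would attack the second moment. Since $N$ is a simple counting process, at each jump $N_t^2-N_{t-}^2=(N_{t-}+1)^2-N_{t-}^2=2N_{t-}+1$, so $dN_t^2=(2N_{t-}+1)\,dN_t$. Using that $N-\int_0^{\cdot}\lambda_s\,ds$ is a martingale and substituting $\lambda_s=\beta+\tfrac{N_{s-}}{s+1}$, one obtains a linear first-order ODE for $v(t):=\mathbb{E}[N_t^2]$:
\begin{equation*}
v'(t)-\frac{2\,v(t)}{t+1}=2\beta\,m(t)+\beta+\frac{m(t)}{t+1}.
\end{equation*}
The integrating factor is now $(t+1)^{-2}$. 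Substituting the explicit expression for $m(t)$ and integrating from $0$ to $t$, the only non-elementary evaluation required is $\int_0^t\frac{\log(s+1)}{(s+1)^2}\,ds$, which the substitution $u=\log(s+1)$ turns into $\int_0^{\log(t+1)}u\,e^{-u}\,du$, handled by a single integration by parts. Collecting terms produces $v(t)$ in closed form. Finally, forming $\text{Var}[N_t]=v(t)-m(t)^2$, the leading $\beta^2(t+1)^2\log^2(t+1)$ contributions cancel exactly, leaving precisely $-\beta(t+1)\log(t+1)+2\beta t(t+1)$, after which $\text{Var}[N_t]/t^2\to 2\beta$ is immediate.

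The main obstacle is really bookkeeping rather than conceptual difficulty: one must carefully track boundary terms at $s=0$ in both integrating-factor steps and confirm that the $\log^2$ pieces cancel exactly between $v(t)$ and $m(t)^2$, since any slip in a constant would destroy the clean form of the stated variance. Justifying the interchange of expectation and time integral is routine once one has $\mathbb{E}[N_t]<\infty$ from the first step (or after a standard localization argument applied to the martingale $M_t$). Beyond this, every step is parallel to the subcritical derivation in Proposition \ref{Expectation}.
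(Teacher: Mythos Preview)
Your proposal is correct and follows essentially the same moment-ODE strategy as the paper. The only cosmetic difference is that the paper works with the normalized variable $Y_t=N_t/(t+1)$, so that at $\alpha=1$ the homogeneous terms in the ODEs for $\mathbb{E}[Y_t]$ and $\mathbb{E}[Y_t^2]$ vanish and one integrates directly without needing your integrating factors or the $\int\frac{\log(s+1)}{(s+1)^2}\,ds$ computation; the substance of the argument is identical.
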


\begin{corollary}\label{LLNAnother}
Let $\lambda(z)=\beta+\alpha z$, $\beta,\alpha>0$, then

(i) For $\alpha<1$,
$\frac{N_{t}}{t}\rightarrow\frac{\beta}{1-\alpha}$ in probability.

(ii) For $\alpha>1$,
$\frac{N_{t}}{t^{\alpha}}\rightarrow\frac{\beta}{\alpha-1}$ in probability.

(iii) For $\alpha=1$,
$\frac{N_{t}}{t\log t}\rightarrow\beta$ in probability.
\end{corollary}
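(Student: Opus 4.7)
My plan is to derive all three convergences from Chebyshev's inequality applied to the explicit first and second moment formulas already supplied by Propositions \ref{Expectation}, \ref{OneHalf}, and \ref{One}. The general template is standard: if $\mathbb{E}[N_t]/a_t\to c$ and $\text{Var}[N_t]/a_t^{2}\to 0$ for some normalizing sequence $a_t$, then $N_t/a_t\to c$ in probability. The work amounts to checking, in each regime, that the supplied variance decays relative to the square of the proposed normalization.

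The case $0<\alpha<1$ is already contained in Theorem \ref{LLNThm}, since $\lambda(z)=\beta+\alpha z$ is $\alpha$-Lipschitz with unique fixed point $\beta/(1-\alpha)$. A direct second-moment proof works as well: Proposition \ref{Expectation} gives $\mathbb{E}[N_t]/t\to\beta/(1-\alpha)$, while the variance is $O(t)$ for $\alpha<1/2$, $O(t\log t)$ for $\alpha=1/2$ (by Proposition \ref{OneHalf}), and $O(t^{2\alpha})$ for $1/2<\alpha<1$; in every sub-regime $\text{Var}[N_t]/t^{2}\to 0$, so Chebyshev finishes. The case $\alpha=1$ is equally routine: Proposition \ref{One} gives $\mathbb{E}[N_t]=\beta(t+1)\log(t+1)$ and $\text{Var}[N_t]=O(t^{2})$, hence $\mathbb{E}[N_t]/(t\log t)\to\beta$ and $\text{Var}[N_t]/(t\log t)^{2}=O(1/\log^{2}t)\to 0$.

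The case $\alpha>1$ is the main obstacle. Proposition \ref{Expectation} does supply the correct mean scaling $\mathbb{E}[N_t]/t^{\alpha}\to\beta/(\alpha-1)$, but the companion variance formula produces $\text{Var}[N_t]/t^{2\alpha}\to 2\beta/(2\alpha-1)>0$, so the naive Chebyshev approach does not yield concentration around a constant. To make progress I would apply the integrating factor $(t+1)^{-\alpha}$ directly to the jump SDE with intensity $\lambda_t=\beta+\alpha N_{t-}/(t+1)$ and obtain the decomposition
\begin{equation*}
\frac{N_t}{(t+1)^{\alpha}}=\beta\int_{0}^{t}(s+1)^{-\alpha}\,ds+\int_{0}^{t}(s+1)^{-\alpha}\,dM_s,
\end{equation*}
in which the deterministic term converges to $\beta/(\alpha-1)$ and the stochastic integral is a mean-zero martingale whose angle bracket has a convergent tail because $\mathbb{E}[\lambda_s]$ grows only like $s^{\alpha-1}$. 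The essential question, and the hard step of the proof, is whether this martingale integral vanishes in probability or converges to a non-degenerate limit consistent with the non-vanishing normalized variance; the latter scenario would indicate that in (ii) the statement ought to be interpreted as convergence in distribution (to a random variable of mean $\beta/(\alpha-1)$) rather than in probability, and sorting out which version of the claim actually holds is where the real effort lies.
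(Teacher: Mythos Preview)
Your approach for parts (i) and (iii) is exactly the paper's: its proof is the single line ``The proof follows from the results in Proposition \ref{Expectation}, Proposition \ref{OneHalf}, Proposition \ref{One} and Chebychev's inequality.'' Your expanded treatment of the three sub-regimes $\alpha<1/2$, $\alpha=1/2$, $1/2<\alpha<1$ is precisely what that sentence abbreviates.

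For part (ii) you have in fact uncovered a genuine defect, not in your proposal but in the paper itself. The paper's one-line proof invokes Chebyshev together with Proposition \ref{Expectation}, but as you note that proposition gives $\text{Var}[N_t]/t^{2\alpha}\to 2\beta/(2\alpha-1)>0$, so Chebyshev cannot yield convergence in probability to a constant. Your integrating-factor decomposition
\[
\frac{N_t}{(t+1)^{\alpha}}=\beta\int_{0}^{t}(s+1)^{-\alpha}\,ds+\int_{0}^{t}(s+1)^{-\alpha}\,dM_s
\]
is correct, and the answer to your ``essential question'' is the second alternative: the stochastic integral is an $L^{2}$-bounded martingale (its predictable bracket has expectation $\int_{0}^{\infty}(s+1)^{-2\alpha}\mathbb{E}[\lambda_s]\,ds=2\beta/(2\alpha-1)$), hence converges a.s.\ and in $L^{2}$ to a mean-zero random variable $W_\infty$ with $\text{Var}[W_\infty]=2\beta/(2\alpha-1)>0$. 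Consequently $N_t/t^{\alpha}\to \beta/(\alpha-1)+W_\infty$ almost surely, a \emph{non-degenerate} limit. Statement (ii) as written---convergence in probability to the constant $\beta/(\alpha-1)$---is therefore false, and the paper's proof does not establish it. Your instinct that the claim should be read as convergence in distribution (indeed almost sure convergence) to a random variable with that mean is the right diagnosis; compare the analogous phenomenon in Proposition \ref{GammaLimit}.
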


We can also compute the covariance structure explicitly when $\lambda(\cdot)$ is linear. 

\begin{proposition}\label{CovProp}
Assume $\lambda(z)=\beta+\alpha z$, $\alpha,\beta>0$ and $\alpha\notin\{\frac{1}{2},1\}$, for any $t>s$,
\begin{equation}
\text{Cov}[N_{t},N_{s}]
=(t+1)^{\alpha}\left[\frac{\beta}{(1-2\alpha)(1-\alpha)}(s+1)^{1-\alpha}+\frac{\beta}{1-\alpha}
-\frac{2\beta}{1-2\alpha}(s+1)^{\alpha}\right].
\end{equation}
For $\alpha=\frac{1}{2}$,
\begin{equation}
\text{Cov}[N_{t},N_{s}]
=2\beta[-(t+1)^{\frac{1}{2}}(s+1)^{\frac{1}{2}}+(t+1)^{\frac{1}{2}}+(t+1)^{\frac{1}{2}}(s+1)^{\frac{1}{2}}\log(s+1)].
\end{equation}
\end{proposition}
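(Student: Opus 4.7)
The plan is to exploit the linearity of $\lambda(z)=\beta+\alpha z$ to get a closed-form expression for the conditional expectation $\mathbb{E}[N_{t}\mid\mathcal{F}_{s}]$ when $t>s$, and then read off the covariance.

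First, for fixed $s$, set $m(t):=\mathbb{E}[N_{t}\mid\mathcal{F}_{s}]$ for $t\geq s$. Since $M_{t}=N_{t}-\int_{0}^{t}\lambda_{u}\,du$ is a martingale and $\lambda_{u}=\beta+\alpha(N_{u-}+\gamma)/(u+1)$, taking $\mathcal{F}_{s}$-conditional expectations gives the linear ODE
\begin{equation}
m'(t)=\beta+\alpha\,\frac{m(t)+\gamma}{t+1},\qquad m(s)=N_{s}.
\end{equation}
Using the integrating factor $(t+1)^{-\alpha}$ and integrating from $s$ to $t$, I obtain an explicit solution of the form
\begin{equation}
m(t)=\frac{(t+1)^{\alpha}}{(s+1)^{\alpha}}\,N_{s}+g(s,t),
\end{equation}
where $g(s,t)$ depends only on $s$ and $t$ (not on $N_{s}$). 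This is the content of the first computational step; the important structural fact is that the coefficient of $N_{s}$ is the deterministic ratio $(t+1)^{\alpha}/(s+1)^{\alpha}$.

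Second, I would compute $\mathbb{E}[N_{t}N_{s}]=\mathbb{E}[N_{s}\,m(t)]=\frac{(t+1)^{\alpha}}{(s+1)^{\alpha}}\mathbb{E}[N_{s}^{2}]+g(s,t)\,\mathbb{E}[N_{s}]$. Taking expectations of the formula for $m(t)$ also gives $\mathbb{E}[N_{t}]=\frac{(t+1)^{\alpha}}{(s+1)^{\alpha}}\mathbb{E}[N_{s}]+g(s,t)$ (consistent with \eqref{ExpFormula}). Subtracting $\mathbb{E}[N_{t}]\mathbb{E}[N_{s}]$, the term $g(s,t)\,\mathbb{E}[N_{s}]$ cancels, and one is left with
\begin{equation}
\mathrm{Cov}[N_{t},N_{s}]=\frac{(t+1)^{\alpha}}{(s+1)^{\alpha}}\,\mathrm{Var}[N_{s}].
\end{equation}

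Third, plugging in the explicit variance formula \eqref{VarFormula} from Proposition \ref{Expectation} yields the stated formula for $\alpha\notin\{\tfrac{1}{2},1\}$, and plugging in the $\alpha=\tfrac{1}{2}$ variance from Proposition \ref{OneHalf} gives the second displayed formula. The heart of the argument is the observation that in the linear case $\mathbb{E}[N_{t}\mid\mathcal{F}_{s}]$ is an affine function of $N_{s}$ whose slope depends only on $s$ and $t$; the remaining work is routine ODE and algebraic manipulation. The only mild subtlety is handling the exceptional exponents $\alpha=\tfrac{1}{2}$ and $\alpha=1$, where the integrating-factor antiderivatives produce logarithms; but the factorization $\mathrm{Cov}[N_{t},N_{s}]=(t+1)^{\alpha}(s+1)^{-\alpha}\mathrm{Var}[N_{s}]$ still holds, so the $\alpha=\tfrac{1}{2}$ formula drops out directly from Proposition \ref{OneHalf}.
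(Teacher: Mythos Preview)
Your proposal is correct and in fact cleaner than the paper's argument. Both approaches start from the same linear ODE coming from the martingale property and the linearity of $\lambda$: the paper writes it for $m(t,s)=\mathbb{E}[N_{t}N_{s}]$ and obtains $\partial_{t}m=\beta\,\mathbb{E}[N_{s}]+\frac{\alpha}{t+1}m$, while you write it for the conditional expectation $\mathbb{E}[N_{t}\mid\mathcal{F}_{s}]$; these are of course the same ODE up to multiplying through by $N_{s}$ and taking expectations. The real difference is what happens after solving. The paper plugs the explicit expressions for $\mathbb{E}[N_{s}]$ and $\mathbb{E}[N_{s}^{2}]$ from Proposition~\ref{Expectation} into the solved $m(t,s)$ and then performs a page of algebraic simplification to extract the covariance. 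You instead isolate the structural fact that $\mathbb{E}[N_{t}\mid\mathcal{F}_{s}]$ is affine in $N_{s}$ with slope $(t+1)^{\alpha}/(s+1)^{\alpha}$, which immediately gives the identity
\[
\mathrm{Cov}[N_{t},N_{s}]=\frac{(t+1)^{\alpha}}{(s+1)^{\alpha}}\,\mathrm{Var}[N_{s}],
\]
and then you read off both displayed formulas directly from the already-proved variance expressions in Propositions~\ref{Expectation} and~\ref{OneHalf}. This bypasses all of the paper's term-by-term cancellation and makes transparent why the $t$-dependence enters only through the single factor $(t+1)^{\alpha}$. One small remark: in the paper's setting for this section the initial shift is $\gamma=0$ (cf.\ $Y_{0}=0$ in the proof of Proposition~\ref{Expectation}); your inclusion of $\gamma$ is harmless since it is absorbed into the deterministic $g(s,t)$, which cancels in the covariance anyway.
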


\begin{proposition}\label{CovOneProp}
Assume $\lambda(z)=\beta+z$, $\beta>0$, for any $t>s$,
\begin{equation}
\text{Cov}[N_{t},N_{s}]
=-\beta(t+1)\log(s+1)+2\beta s(t+1).
\end{equation}
\end{proposition}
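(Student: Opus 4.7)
The plan is to proceed exactly as one would expect for Proposition \ref{CovProp}, by computing the conditional expectation $\mathbb{E}[N_{t}\mid\mathcal{F}_{s}]$ explicitly via an ODE in $t$, then taking expectations against $N_{s}$ and subtracting $\mathbb{E}[N_{t}]\mathbb{E}[N_{s}]$. The linearity $\lambda(z)=\beta+z$ is what makes this tractable, because the conditional mean function satisfies a closed first-order linear ODE.

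Concretely, fix $s<t$ and set $g(u):=\mathbb{E}[N_{u}\mid\mathcal{F}_{s}]$ for $u\geq s$. Since $M_{u}=N_{u}-\int_{0}^{u}\lambda_{r}\,dr$ is a martingale and $\lambda_{r}=\beta+\frac{N_{r-}+\gamma}{r+1}$, conditioning on $\mathcal{F}_{s}$ yields
\begin{equation}
g'(u)=\beta+\frac{g(u)+\gamma}{u+1},\qquad g(s)=N_{s}.
\end{equation}
Dividing by the integrating factor $u+1$ gives $\left(\frac{g(u)+\gamma}{u+1}\right)'=\frac{\beta}{u+1}$, and integrating from $s$ to $t$ produces the closed form
\begin{equation}
\mathbb{E}[N_{t}\mid\mathcal{F}_{s}]=\frac{t+1}{s+1}\bigl(N_{s}+\gamma\bigr)-\gamma+\beta(t+1)\log\!\left(\frac{t+1}{s+1}\right).
\end{equation}
(For the stated proposition, $\gamma=0$ is taken, matching the mean computation in Proposition \ref{One}.)

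Next I would multiply by $N_{s}$ and take expectations, obtaining
\begin{equation}
\mathbb{E}[N_{t}N_{s}]=\frac{t+1}{s+1}\,\mathbb{E}[N_{s}^{2}]+\beta(t+1)\log\!\left(\frac{t+1}{s+1}\right)\mathbb{E}[N_{s}].
\end{equation}
Substituting the formulas $\mathbb{E}[N_{s}]=\beta(s+1)\log(s+1)$ and $\mathbb{E}[N_{s}^{2}]=\mathrm{Var}[N_{s}]+\mathbb{E}[N_{s}]^{2}=-\beta(s+1)\log(s+1)+2\beta s(s+1)+\beta^{2}(s+1)^{2}\log^{2}(s+1)$ from Proposition \ref{One}, the leading term $\frac{t+1}{s+1}\cdot\beta^{2}(s+1)^{2}\log^{2}(s+1)$ combined with the cross term $\beta^{2}(t+1)(s+1)\log(s+1)[\log(t+1)-\log(s+1)]$ collapses to $\beta^{2}(t+1)(s+1)\log(t+1)\log(s+1)=\mathbb{E}[N_{t}]\mathbb{E}[N_{s}]$, so all the quadratic-in-$\log$ pieces cancel. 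What remains after subtracting $\mathbb{E}[N_{t}]\mathbb{E}[N_{s}]$ is exactly $-\beta(t+1)\log(s+1)+2\beta s(t+1)$.

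The only real step to be careful about is the first one: writing down and solving the conditional ODE for $g(u)$, including justifying differentiation under the conditional expectation. That is handled by Fubini together with $\sup_{u\leq T}\mathbb{E}[N_{u}]<\infty$ on compact intervals (which follows from $\mathbb{E}[N_{u}]=\beta(u+1)\log(u+1)$), so the interchange is routine and no delicate estimates are needed. Everything else is algebra that parallels Proposition \ref{CovProp} and uses only the explicit moment formulas of Proposition \ref{One}.
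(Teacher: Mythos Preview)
Your proposal is correct and essentially the same approach as the paper's: the paper writes the ODE directly for $m(t,s)=\mathbb{E}[N_tN_s]$ in the variable $t$ (namely $\partial_t m=\beta\mathbb{E}[N_s]+\frac{1}{t+1}m$), while you first solve the conditional-mean ODE $g'(u)=\beta+\frac{g(u)}{u+1}$ and then multiply by $N_s$ and take expectations, which yields the very same ODE and solution. The remaining substitution of the moment formulas from Proposition~\ref{One} and the algebraic cancellation you describe match the paper's computation line for line.
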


We have seen that if $\lambda(z)=\alpha z$, $0<\alpha<1$, we have $\lim_{t\rightarrow\infty}\frac{N_{t}}{t}=0$. 
A natural question to ask is under this regime, what should be the correct scaling for $N_{t}$ as time $t$ goes to $\infty$.
Since $\lambda(0)=0$, we need to start the process at some positive inditial condition $\gamma>0$.

\begin{proposition}\label{GammaLimit}
Let the intensity at time $t$ be
\begin{equation}
\lambda_{t}=\frac{\alpha(N_{t-}+\gamma)}{t+1},\qquad\alpha,\gamma>0.
\end{equation}
Then, we have $\mathbb{E}[N_{t}]=\gamma[(t+1)^{\alpha}-1]$ and
\begin{equation}\label{alphaVar}
\text{Var}[N_{t}]=\gamma[(t+1)^{2\alpha}-(t+1)^{\alpha}].
\end{equation}
In particular,
$\lim_{t\rightarrow\infty}\frac{\mathbb{E}[N_{t}]}{t^{\alpha}}=\gamma$,
and $\lim_{t\rightarrow\infty}\frac{\text{Var}[N_{t}]}{t^{2\alpha}}=\gamma$.
Also, for any $t>s$,
\begin{equation}
\text{Cov}[N_{t},N_{s}]=[(s+1)^{\alpha}-1]\left[\gamma(t+1)^{\alpha}+(\gamma-\gamma^{2})\left[\frac{(t+1)^{\alpha}}{(s+1)^{\alpha}}-1\right]\right].
\end{equation}
Moreover, as $t\rightarrow\infty$,
\begin{equation}
\frac{N_{t}}{t^{\alpha}}\rightarrow\chi(\gamma),
\end{equation}
a.s. and in $L^{2}(\mathbb{P})$ where $\chi(\gamma)$ is a random variable with gamma distribution
with parameters $\gamma$ (shape) and $1$ (scale).
\end{proposition}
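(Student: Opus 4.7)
The plan is to derive the first two moments and covariance from linear ODEs coming from the martingale representation $N_t = \int_0^t \lambda_s\,ds + M_t$ with $M_t$ a martingale, and then to prove the convergence $N_t/t^\alpha \to \chi(\gamma)$ via martingale convergence together with identification of the limit through the probability generating function.

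\emph{Moments.} Taking expectations of the martingale representation, $m(t) := \mathbb{E}[N_t]$ satisfies $m'(t) = \alpha(m(t)+\gamma)/(t+1)$ with $m(0) = 0$; the substitution $g(t) = m(t) + \gamma$ yields $(\log g)' = \alpha/(t+1)$, giving $\mathbb{E}[N_t] = \gamma[(t+1)^\alpha - 1]$. Similarly, $d(N_t^2) = (2N_{t-}+1)\,dN_t$, so after taking expectations $V(t) = \mathbb{E}[N_t^2]$ satisfies a linear ODE which can be solved using the integrating factor $(t+1)^{-2\alpha}$; subtracting $m(t)^2$ yields \eqref{alphaVar}. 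For the covariance, the Markov property allows the same mean ODE to be solved on $[s,t]$ from initial value $N_s$, giving $\mathbb{E}[N_t \mid \mathcal{F}_s] = (N_s + \gamma)\bigl(\tfrac{t+1}{s+1}\bigr)^\alpha - \gamma$; then since $N_s$ is $\mathcal{F}_s$-measurable, $\mathrm{Cov}[N_t, N_s] = \bigl(\tfrac{t+1}{s+1}\bigr)^\alpha \mathrm{Var}[N_s]$, from which the stated formula follows.

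\emph{Convergence.} The key observation is that $Z_t := (N_t + \gamma)/(t+1)^\alpha$ is a non-negative martingale; indeed, by the product rule, $dZ_t = (t+1)^{-\alpha}\,dN_t - \alpha(N_{t-}+\gamma)(t+1)^{-\alpha-1}\,dt = (t+1)^{-\alpha}\,dM_t$. The variance formula shows $\mathbb{E}[Z_t^2] = \mathbb{E}[(N_t+\gamma)^2]/(t+1)^{2\alpha} \to \gamma(\gamma+1)$, so $Z_t$ is bounded in $L^2$ and hence converges almost surely and in $L^2$ to some $Z_\infty$ by Doob's theorem. Since $(t+1)^\alpha/t^\alpha \to 1$ and $\gamma/t^\alpha \to 0$, the same convergence transfers to $N_t/t^\alpha$.

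\emph{Identifying the limit.} The remaining task --- and the only real obstacle --- is to show $Z_\infty$ has the $\Gamma(\gamma,1)$ distribution. I would derive the probability generating function $G(t,z) := \mathbb{E}[z^{N_t}]$ directly from the Kolmogorov forward equations, which give the first-order PDE $\partial_t G = \tfrac{\alpha(z-1)}{t+1}(zG_z + \gamma G)$. Solving by the method of characteristics yields $G(t,z) = [(t+1)^\alpha - z((t+1)^\alpha - 1)]^{-\gamma}$, identifying $N_t$ as negative binomial with parameters $\gamma$ and $(t+1)^{-\alpha}$. Substituting $z = e^{\theta/(t+1)^\alpha}$ and expanding shows $\mathbb{E}[e^{\theta N_t/(t+1)^\alpha}] \to (1-\theta)^{-\gamma}$ for $\theta < 1$, which is the moment generating function of $\Gamma(\gamma,1)$; combined with the almost sure convergence already obtained, this identifies the law of $Z_\infty$. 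An alternative route is the time change $\tau = \log(t+1)$, under which $N$ becomes a classical linear birth process with immigration rate $\alpha\gamma$, whose rescaled version $e^{-\alpha\tau}\tilde N_\tau$ is known to converge a.s.\ to a $\Gamma(\gamma,1)$ random variable.
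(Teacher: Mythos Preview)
Your proof is correct and follows the paper's strategy almost exactly: linear ODEs for the first two moments, the martingale $Z_t=(N_t+\gamma)/(t+1)^\alpha$ together with $L^2$-boundedness for almost sure and $L^2$ convergence, and identification of the limit by recognizing $N_t$ as negative binomial and passing to the limit in a transform. The one genuine difference is how the negative binomial law is obtained: you solve the Kolmogorov forward PDE for the generating function, while the paper quotes the distribution from Theorem~\ref{MarginalProp}, where it was computed by direct multiple integration over jump times; your route is self-contained and a bit more efficient, at the cost of having to solve a first-order PDE. For the covariance, both approaches reduce to $\mathbb{E}[N_t\mid N_s]=(N_s+\gamma)\bigl(\tfrac{t+1}{s+1}\bigr)^\alpha-\gamma$ and hence
\[
\mathrm{Cov}[N_t,N_s]=\Bigl(\tfrac{t+1}{s+1}\Bigr)^{\alpha}\mathrm{Var}[N_s]=\gamma(t+1)^{\alpha}\bigl[(s+1)^{\alpha}-1\bigr].
\]
Be aware that this does \emph{not} coincide with the displayed covariance in the proposition for $\gamma\neq 1$: the paper's computation substitutes $\gamma\,\mathbb{E}[N_s]$ as $\gamma[(s+1)^\alpha-1]$ rather than $\gamma^2[(s+1)^\alpha-1]$, so your sentence ``from which the stated formula follows'' should be amended to reflect that your (correct) expression is a simplification of what the stated formula ought to be.
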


We end this section with a criterion on whether the point process $N_{t}$ can be explosive or not. 
Essentially, when $\lambda(\cdot)$ is super linear, it gives the explosive regime.

\begin{proposition}\label{ExplosionProp}
Assume that $\int_{0}^{\infty}\frac{1}{\lambda(z)}dz<\infty$. Then, the point process is explosive.
More precisely, $0<\mathbb{P}(\tau<\infty)<1$, where $\tau:=\inf\{t>0:N_{t}=\infty\}$.
\end{proposition}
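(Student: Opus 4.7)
The plan is to establish $\mathbb{P}(\tau<\infty)>0$ and $\mathbb{P}(\tau<\infty)<1$ separately via two comparison arguments.

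For the lower bound, I would fix any $T>0$ and couple $N_{t}$ on $[0,T]$ with a classical pure-birth chain $\tilde{N}_{t}$ whose rate in state $n$ is $\mu_{n}:=\lambda((n+\gamma)/(T+1))$. Because $\lambda$ is increasing and $t+1\le T+1$ on this interval, the intensity $\lambda_{t}$ of $N$ dominates $\mu_{N_{t-}}$, so a thinning of the driving Poisson random measure yields a coupling with $\tilde{N}_{t}\le N_{t}$ throughout $[0,T]$. The integral test combined with the substitution $z=(x+\gamma)/(T+1)$ gives
\begin{equation*}
\sum_{n=0}^{\infty}\frac{1}{\mu_{n}}\le\frac{1}{\lambda(\gamma/(T+1))}+(T+1)\int_{0}^{\infty}\frac{dz}{\lambda(z)}<\infty,
\end{equation*}
so by Feller's classical criterion $\tilde{N}$ explodes almost surely at $\tilde{\tau}=\sum_{n\ge 0}\epsilon_{n}/\mu_{n}$ for i.i.d.\ $\mathrm{Exp}(1)$ variables $\epsilon_{n}$. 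To upgrade ``a.s.\ finite'' to ``$\le T$ with positive probability'', split the series at a large index $N$: the finite piece $\sum_{n\le N}\epsilon_{n}/\mu_{n}$ is a hypoexponential random variable whose density is strictly positive on $(0,\infty)$, so $\mathbb{P}(\sum_{n\le N}\epsilon_{n}/\mu_{n}\le T/2)>0$, and picking $N$ large enough that $\sum_{n>N}1/\mu_{n}\le T/4$ yields, by Markov, $\mathbb{P}(\sum_{n>N}\epsilon_{n}/\mu_{n}\le T/2)\ge 1/2$; independence then forces $\mathbb{P}(\tilde{\tau}\le T)>0$, and the coupling gives $\mathbb{P}(\tau\le T)\ge\mathbb{P}(\tilde{\tau}\le T)>0$.

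For the upper bound, I would appeal to Theorem \ref{StablePositive}. The assumption $\int_{0}^{\infty}dz/\lambda(z)<\infty$ forces $\lambda(z)/z\to\infty$ as $z\to\infty$, so the finitely many solutions of $x=\lambda(x)$ lie in a bounded interval; by the standing assumption the smallest fixed point $x^{*}$ is strictly stable, and for the relevant choice of $\gamma$ (below the first unstable fixed point) Theorem \ref{StablePositive} provides $\mathbb{P}(N_{t}/t\to x^{*})>0$. On this event $Y_{t}=(N_{t}+\gamma)/(t+1)\to x^{*}$, so $Y_{t}$ is uniformly bounded, $\lambda_{t}=\lambda(Y_{t-})$ is uniformly bounded, and consequently $N_{t}$ grows at most linearly in $t$; in particular $\tau=\infty$ on this event, whence $\mathbb{P}(\tau<\infty)\le 1-\mathbb{P}(N_{t}/t\to x^{*})<1$.

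The hardest step is the final one in the lower bound: a bare Markov inequality on $\tilde{\tau}$ would give $\mathbb{P}(\tilde{\tau}\le T)>0$ only when $(T+1)\int_{0}^{\infty}dz/\lambda(z)<T$, because $\mathbb{E}[\tilde{\tau}]\sim(T+1)\int_{0}^{\infty}dz/\lambda(z)$, and this quantitative restriction is not assumed. The truncation trick above bypasses this by exploiting the absolute continuity of a finite sum of independent exponentials on $(0,\infty)$, and is the technical heart of the argument; the upper bound is essentially a direct corollary of Theorem \ref{StablePositive} once one observes that convergence of $N_{t}/t$ to a finite $x^{*}$ keeps $\lambda_{t}$ bounded.
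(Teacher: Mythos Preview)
Your lower-bound argument is correct and is the same comparison-to-pure-birth idea the paper uses; the paper simply cites Feller for $\mathbb{P}(N(0,T]=\infty)>0$, while you spell out the truncation trick guaranteeing $\mathbb{P}(\tilde\tau\le T)>0$ for the \emph{given} $T$, which is a useful clarification since the rates $\mu_{n}=\lambda((n+\gamma)/(T+1))$ themselves depend on $T$.

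For the upper bound the paper takes a much more elementary route than you do: it simply bounds $\mathbb{P}(\tau=\infty)$ below by the void probability
\[
\mathbb{P}(N(0,\infty)=0)=\exp\Bigl(-\int_{0}^{\infty}\lambda\bigl(\tfrac{\gamma}{s+1}\bigr)\,ds\Bigr),
\]
with no appeal to the fixed-point structure or to Theorem~\ref{StablePositive} at all. Your route through Theorem~\ref{StablePositive} has a genuine gap: it presupposes that at least one (hence a smallest, necessarily stable) fixed point exists, but the hypothesis $\int_{0}^{\infty}dz/\lambda(z)<\infty$ together with the standing assumptions does not guarantee this. You correctly deduce $\lambda(z)/z\to\infty$, so $\lambda(z)>z$ for large $z$; integrability of $1/\lambda$ near $0$ under the $C^{1}$ assumption also forces $\lambda(0)>0$. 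But these two facts are perfectly consistent with $\lambda(z)>z$ for \emph{every} $z\ge 0$ --- take $\lambda(z)=1+z^{2}$, which satisfies all the hypotheses yet has no real fixed point (the discriminant of $z^{2}-z+1$ is negative). In that regime Theorem~\ref{StablePositive} is vacuous and your argument for $\mathbb{P}(\tau<\infty)<1$ collapses entirely. Your parenthetical restriction on $\gamma$ is also unnecessary and suggests some uncertainty: Theorem~\ref{StablePositive} carries no constraint on the initial condition, since $\mathbb{P}(N_{t_{0}}=k)>0$ for every finite $k$ and $t_{0}>0$ by the explicit formula in Theorem~\ref{MarginalProp}, so $N_{t_{0}}/t_{0}$ lands in any prescribed neighborhood of $x^{*}$ with positive probability regardless of $\gamma$.
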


\subsection{High Initial Value}\label{LargeInitialSection}

One can also study the asymptotics for high initial value $\gamma\rightarrow\infty$. 
In the classical birth-death process, that corresponds to high initial population size. 
The asymptotics results for high initial values can be interesting and useful. For example, 
they are useful in the models of cancer dyanmics, see e.g. Foo and Leder \cite{FooI}, Foo et al. \cite{FooII}.

\begin{proposition}\label{LargeInitial}
Assume that $\lim_{z\rightarrow\infty}\frac{\lambda(z)}{z}=\alpha$. Then, 
\begin{equation}
\sup_{0\leq s\leq t}\left|\frac{N_{s}}{\gamma}-[(s+1)^{\alpha}-1]\right|\rightarrow 0,
\end{equation}
in probability as $\gamma\rightarrow\infty$.
\end{proposition}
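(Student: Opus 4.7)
The plan is to compare $Y_s:=N_s/\gamma$ with the deterministic target $z(s):=(s+1)^\alpha-1$, which solves $z'(s)=\alpha(z(s)+1)/(s+1)$ with $z(0)=0$. Setting $v_u:=(N_{u-}+\gamma)/(u+1)$ and writing $\lambda(v)=\alpha v+r(v)$ with $r(v)/v\to 0$ as $v\to\infty$, the martingale decomposition $N_s=\int_0^s\lambda(v_u)\,du+M_s$ with $\langle M\rangle_s=\int_0^s\lambda(v_u)\,du$, combined with the identity $v_u/\gamma-(z(u)+1)/(u+1)=(N_{u-}/\gamma-z(u))/(u+1)$, yields
\[
e_s:=Y_s-z(s)=\int_0^s\frac{\alpha\,e_{u-}}{u+1}\,du+\frac{1}{\gamma}\int_0^s r(v_u)\,du+\frac{M_s}{\gamma}.
\]

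The key step is the remainder bound. Since $v_u\ge\gamma/(u+1)$ deterministically, for any $\varepsilon>0$ there exists $K_\varepsilon$ with $|r(v)|\le\varepsilon v$ for $v\ge K_\varepsilon$; hence for $\gamma\ge K_\varepsilon(t+1)$ the bound $|r(v_u)|\le\varepsilon v_u$ holds uniformly on $[0,t]$. Using $N_{u-}/\gamma+1=e_{u-}+(u+1)^\alpha$ and $\int_0^s(u+1)^{\alpha-1}\,du=((s+1)^\alpha-1)/\alpha$, and passing to the supremum over $[0,s]$ of both sides, one obtains
\[
E_s\le(\alpha+\varepsilon)\int_0^s\frac{E_u}{u+1}\,du+\frac{\varepsilon(s+1)^\alpha}{\alpha}+M^*_s,
\]
where $E_s:=\sup_{0\le u\le s}|e_u|$ and $M^*_s:=\sup_{0\le u\le s}|M_u|/\gamma$. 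Linear Gronwall with a non-decreasing forcing and kernel $(u+1)^{-1}$ then gives $E_t\le\bigl[\varepsilon(t+1)^\alpha/\alpha+M^*_t\bigr]\,(t+1)^{\alpha+\varepsilon}$.

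It remains to show $M^*_t\to 0$ in probability, for which it suffices to establish $\mathbb{E}[N_t]=O(\gamma)$, since Doob's $L^2$ inequality then gives $\mathbb{E}[(M^*_t)^2]\le 4\mathbb{E}[M_t^2]/\gamma^2=4\mathbb{E}[N_t]/\gamma^2=O(1/\gamma)$. The asymptotic hypothesis together with continuity of $\lambda$ on $[0,K_\varepsilon]$ produces a global bound $\lambda(v)\le\beta_0+(\alpha+\varepsilon)v$, whence $m(s):=\mathbb{E}[N_s]$ satisfies
\[
m'(s)\le\beta_0+(\alpha+\varepsilon)\frac{m(s)+\gamma}{s+1},\qquad m(0)=0,
\]
which integrates via the integrating factor $(s+1)^{-(\alpha+\varepsilon)}$ to $m(t)\le(t+1)^{\alpha+\varepsilon}(C(t)+\gamma)=O(\gamma)$. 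Sending $\gamma\to\infty$, then $\varepsilon\to 0$, in the Gronwall bound concludes $\sup_{0\le s\le t}|e_s|\to 0$ in probability.

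The main obstacle, as I see it, is the uniform handling of the nonlinear intensity along the random trajectory $(v_u)_{0\le u\le t}$: only the asymptotic linearity $\lambda(v)\sim\alpha v$ is given, with no global Lipschitz or linear bound assumed a priori. The redemption is that the additive constant $\gamma$ inside $v_u$ forces the argument of $\lambda$ uniformly above any prescribed threshold once $\gamma$ is sufficiently large, so that the sublinear remainder $r$ can be dominated by $\varepsilon v$ everywhere on the trajectory; this is what makes both the pathwise Gronwall step and the a priori mean bound simultaneously tractable.
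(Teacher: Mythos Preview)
Your proof is correct, and the pivotal observation---that $v_u\ge\gamma/(t+1)$ deterministically, so for $\gamma$ large the argument of $\lambda$ sits in the asymptotically linear regime uniformly on $[0,t]$---is exactly the same insight the paper exploits. The route you take after that, however, is genuinely different.

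The paper first treats the exactly linear case $\lambda(z)=\alpha z$ by noting that $(N_s+\gamma)/(s+1)^\alpha$ is itself a martingale; Doob's inequality together with the explicit mean $\mathbb{E}[N_t]=\gamma[(t+1)^\alpha-1]$ from Proposition~\ref{GammaLimit} yields the uniform limit at once. For general $\lambda$ it then sandwiches the process between the two linear models with rates $(\alpha\pm\delta)z$ (valid on the trajectory once $\gamma\ge K(t+1)$) and lets $\delta\to 0$. So the paper's argument is ``exact linear case $+$ monotone coupling,'' leaning on the closed-form structure of the linear model.

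Your argument is a direct linearization-plus-Gronwall: write $\lambda=\alpha\cdot+\,r$, derive an integral inequality for the pathwise error $E_s$, and close it with the integrating-factor form of Gronwall. This avoids both the explicit martingale of the linear case and the coupling step; in exchange you need the auxiliary a~priori bound $\mathbb{E}[N_t]=O(\gamma)$, which you obtain from a global linear majorant $\lambda(v)\le\beta_0+(\alpha+\varepsilon)v$ (legitimate since $\lambda$ is continuous on $[0,K_\varepsilon]$ under the paper's standing hypotheses). Your approach is more self-contained and would transfer more easily to variants where the linear reference model is not explicitly solvable; the paper's approach is shorter once the linear computations of Proposition~\ref{GammaLimit} are in hand.
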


We can also study the case when $\lambda(\cdot)$ is sublinear.

\begin{proposition}\label{SublinearInitial}
Assume that $\lim_{z\rightarrow\infty}\frac{\lambda(z)}{z^{\beta}}=\alpha$, where $\alpha>0$ and $0<\beta<1$. Then,
\begin{equation}
\sup_{0\leq s\leq t}\left|\frac{N_{s}}{\gamma^{\beta}}-\frac{\alpha}{1-\beta}[(s+1)^{1-\beta}-1]\right|\rightarrow 0,
\end{equation}
in probability as $\gamma\rightarrow\infty$.
\end{proposition}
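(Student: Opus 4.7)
The plan is to follow the same strategy as the linear case (Proposition \ref{LargeInitial}), rescaling by the correct sublinear factor $\gamma^{\beta}$. Writing the Doob--Meyer decomposition
\begin{equation*}
N_{s}=\int_{0}^{s}\lambda\!\left(\frac{N_{u-}+\gamma}{u+1}\right)du+M_{s},
\end{equation*}
dividing by $\gamma^{\beta}$, and letting $\phi(s):=\frac{\alpha}{1-\beta}[(s+1)^{1-\beta}-1]$ denote the candidate limit, I would show separately that the scaled martingale $M_{s}/\gamma^{\beta}$ vanishes uniformly on $[0,t]$ in probability and that the scaled drift converges uniformly to $\phi(s)$.

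The preparatory step is an a priori bound $\mathbb{E}[N_{s}]=O(\gamma^{\beta})$ uniformly for $s\in[0,t]$. Since $\lambda(z)/z^{\beta}\to\alpha$ and $\lambda$ is continuous, there is a universal $C$ with $\lambda(z)\le C(1+z^{\beta})$. Taking expectations in the integral equation, using Jensen's inequality for the concave map $x\mapsto x^{\beta}$ together with the subadditivity $(a+b)^{\beta}\le a^{\beta}+b^{\beta}$ for $\beta\in(0,1)$, the mean $m(s):=\mathbb{E}[N_{s}]$ satisfies
\begin{equation*}
m'(s)\le C+C\,\frac{m(s)^{\beta}+\gamma^{\beta}}{(s+1)^{\beta}}.
\end{equation*}
With $m(0)=0$ and $\beta<1$, a bootstrap argument (guess $m(s)\le K\gamma^{\beta}$ for $s\le t$, substitute on the right-hand side, integrate, and close the estimate by choosing $K$ large enough depending on $t$) yields $m(s)=O(\gamma^{\beta})$ on $[0,t]$.

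Given this bound, Doob's $L^{2}$ inequality gives
\begin{equation*}
\mathbb{E}\!\left[\sup_{0\le s\le t}(M_{s}/\gamma^{\beta})^{2}\right]\le\frac{4\,\mathbb{E}[N_{t}]}{\gamma^{2\beta}}=O(\gamma^{-\beta})\to 0,
\end{equation*}
so the martingale part vanishes uniformly in probability. For the drift, monotonicity of $N$ gives $\sup_{s\le t}N_{s-}/\gamma=N_{t}/\gamma$, which has mean $O(\gamma^{\beta-1})\to 0$ and hence tends to $0$ in probability. Fix $\epsilon>0$ and choose $K$ so that $|\lambda(z)/z^{\beta}-\alpha|\le\epsilon$ for $z\ge K$; then once $\gamma$ is large enough that $\gamma/(t+1)\ge K$, the factorisation
\begin{equation*}
\frac{\lambda((N_{s-}+\gamma)/(s+1))}{\gamma^{\beta}}=\frac{\lambda(\,\cdot\,)}{(\,\cdot\,)^{\beta}}\left(\frac{N_{s-}/\gamma+1}{s+1}\right)^{\!\beta}
\end{equation*}
sandwiches the integrand between $(\alpha\pm\epsilon)(N_{s-}/\gamma+1)^{\beta}/(s+1)^{\beta}$, while $(N_{s-}/\gamma+1)^{\beta}\to 1$ uniformly on $[0,t]$ in probability. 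Integrating over $s$ then gives the desired uniform convergence of the drift to $\phi(s)$.

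The main obstacle is the a priori moment bound, because the inequality for $m(s)$ is genuinely nonlinear through the $m(s)^{\beta}$ term and not directly amenable to Gronwall. The sublinearity $\beta<1$ is precisely what makes the bootstrap close on bounded time intervals: once $m=O(\gamma^{\beta})$ is guessed, $m^{\beta}=O(\gamma^{\beta^{2}})$ is of strictly smaller order than the driving $\gamma^{\beta}$, and the self-consistent estimate holds for $\gamma$ large. After this, the remaining steps are standard martingale and dominated-convergence arguments modelled on the proof of Proposition \ref{LargeInitial}, with only the exponents and the limiting profile adjusted.
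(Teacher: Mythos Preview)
Your proof is correct but takes a genuinely different route from the paper's. The paper first reduces to $\lambda(z)=\alpha z^{\beta}$ by a sandwich argument (as you do for the drift), then builds $N$ via \emph{Poisson embedding}: it sets $N^{(0)}$ to be inhomogeneous Poisson with intensity $\alpha(\gamma/(s+1))^{\beta}$, inductively defines layers $N^{(k+1)}$ with intensity $\lambda\bigl(\frac{\sum_{j\le k}N^{(j)}_{s-}+\gamma}{s+1}\bigr)-\lambda\bigl(\frac{\sum_{j\le k-1}N^{(j)}_{s-}+\gamma}{s+1}\bigr)$, and shows that $N^{(0)}/\gamma^{\beta}$ already converges to the target profile while the mean value theorem gives $\mathbb{E}[N^{(k)}_{t}]\le\bigl(\beta t(\gamma/(t+1))^{\beta-1}\bigr)^{k}\mathbb{E}[N^{(0)}_{t}]$, so the higher layers are geometrically small once $\gamma$ is large. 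Your approach instead works directly with the Doob--Meyer decomposition: an ODE bootstrap gives the a priori bound $\mathbb{E}[N_{t}]=O(\gamma^{\beta})$, Doob's $L^{2}$ inequality kills the martingale, and the factorisation $\lambda(z)/\gamma^{\beta}=(\lambda(z)/z^{\beta})\cdot((N_{s-}/\gamma+1)/(s+1))^{\beta}$ combined with $N_{t}/\gamma\to 0$ handles the drift. Both arguments hinge on $\beta<1$ in an essential way---the paper through the factor $(\gamma/(t+1))^{\beta-1}\to 0$ forcing geometric decay of the layers, you through $m^{\beta}=O(\gamma^{\beta^{2}})\ll\gamma^{\beta}$ closing the bootstrap. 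Your route is closer in spirit to the proof of Proposition~\ref{LargeInitial} and avoids the inductive construction; the paper's layering is more constructive and makes explicit that the zeroth-order Poisson approximation already carries the limit.
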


\subsection{Marginal and Tail Probabilities}\label{TailSection}

In this section, we are interested to study the marginal probabilities $\mathbb{P}(N_{t}=k)$
for a given $k\in\mathbb{N}\cup\{0\}$ and the asymptotics for the tail probabilities $\mathbb{P}(N_{t}\geq\ell)$
for large $\ell$.
We assume that the initial condition is given by $\lambda_{0}=\lambda(\gamma)$, 
where $\gamma\in\mathbb{R}^{+}$.

\begin{theorem}\label{MarginalProp}
For any $k\in\mathbb{N}\cup\{0\}$,

(i)
\begin{align}\label{MasterFormula}
\mathbb{P}(N_{t}=k)
&=\idotsint_{0<t_{1}<t_{2}<\cdots<t_{k}<t}
\prod_{j=1}^{k}\lambda\left(\frac{\gamma+j-1}{t_{j}+1}\right)
\\
&\qquad\qquad
\cdot
e^{-\int_{0}^{t_{1}}\lambda(\frac{\gamma}{s+1})ds
-\int_{t_{1}}^{t_{2}}\lambda(\frac{\gamma+1}{s+1})ds
-\cdots-\int_{t_{k}}^{t}\lambda(\frac{\gamma+k}{s+1})ds}
dt_{1}dt_{2}\cdots dt_{k}.
\nonumber
\end{align}

(ii) In particular, the void probability is given by
\begin{equation}\label{VoidFormula}
\mathbb{P}(N_{t}=0)=e^{-\int_{0}^{t}\lambda(\frac{\gamma}{s+1})ds}.
\end{equation}

(iii) When $\lambda(z)=\alpha z$, $N_{t}$ follows a negative binomial distribution,
\begin{equation}\label{ExampleFormula}
\mathbb{P}(N_{t}=k)=\binom{k+\gamma-1}{k}\left(1-\frac{1}{(t+1)^{\alpha}}\right)^{k}
\left(\frac{1}{(t+1)^{\alpha}}\right)^{\gamma}.
\end{equation}
For any $t>s>0$ and $k,m\in\mathbb{N}\cup\{0\}$, we have the conditional probability
\begin{equation}\label{ConditionalFormula}
\mathbb{P}(N_{t}=k+m|N_{s}=m)=\binom{k+m+\gamma-1}{k}\left(1-\left(\frac{s+1}{t+1}\right)^{\alpha}\right)^{k}
\left(\left(\frac{s+1}{t+1}\right)^{\alpha}\right)^{\gamma+m}.
\end{equation}
\end{theorem}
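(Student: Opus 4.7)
The plan is to exploit the fact that between consecutive jumps $N_{t-}$ is constant, so the intensity $\lambda_{t}=\lambda((N_{t-}+\gamma)/(t+1))$ reduces to an explicit deterministic function of time. Between the $(j-1)$-th and $j$-th jump we have $N_{t-}=j-1$, so the instantaneous hazard rate is $\lambda((\gamma+j-1)/(s+1))$. By standard point-process theory, conditionally on $T_{j-1}=t_{j-1}$ the $j$-th jump time $T_{j}$ has conditional density
\begin{equation*}
\lambda\!\left(\frac{\gamma+j-1}{s+1}\right)\exp\!\left(-\int_{t_{j-1}}^{s}\lambda\!\left(\frac{\gamma+j-1}{u+1}\right)du\right),\qquad s>t_{j-1},
\end{equation*}
and conditionally on $T_{k}=t_{k}$ the no-further-jump factor is $\exp(-\int_{t_{k}}^{t}\lambda((\gamma+k)/(u+1))du)$.

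For part (i), I would multiply these conditional densities together with the final survival factor and integrate the resulting joint density over the simplex $\{0<t_{1}<\cdots<t_{k}<t\}$, which yields exactly \eqref{MasterFormula}. A rigorous justification can be arranged as an induction on $k$ using the strong Markov property at each jump time, or alternatively by invoking Watanabe's time-change theorem to reduce to a unit-rate Poisson process. Part (ii) is then the $k=0$ case: the product over $j$ and the first $k$ integrations in the exponent are empty, leaving only $\exp(-\int_{0}^{t}\lambda(\gamma/(s+1))ds)$.

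For part (iii) I would substitute $\lambda(z)=\alpha z$ into \eqref{MasterFormula}. Each exponent integral $\int_{t_{j-1}}^{t_{j}}\alpha(\gamma+j-1)/(u+1)du=\alpha(\gamma+j-1)\log((t_{j}+1)/(t_{j-1}+1))$ telescopes in $j$, collapsing the product of exponentials to $(t+1)^{-\alpha(\gamma+k)}\prod_{j=1}^{k}(t_{j}+1)^{\alpha}$. Combined with the prefactor $\alpha^{k}\prod_{j=1}^{k}(\gamma+j-1)/(t_{j}+1)$, the integrand becomes $\alpha^{k}\gamma(\gamma+1)\cdots(\gamma+k-1)(t+1)^{-\alpha(\gamma+k)}\prod_{j=1}^{k}(t_{j}+1)^{\alpha-1}$. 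The substitution $u_{j}=(t_{j}+1)^{\alpha}$ then turns the integral over $\{0<t_{1}<\cdots<t_{k}<t\}$ into the volume of $\{1<u_{1}<\cdots<u_{k}<(t+1)^{\alpha}\}$, namely $((t+1)^{\alpha}-1)^{k}/k!$, and recognizing $\gamma(\gamma+1)\cdots(\gamma+k-1)/k!=\binom{\gamma+k-1}{k}$ gives \eqref{ExampleFormula}. For \eqref{ConditionalFormula} I would appeal to the Markov property: given $N_{s}=m$, the process on $[s,\infty)$ still has intensity $\alpha(N_{t-}+\gamma)/(t+1)$ but starts from count $m$, so the same computation with integration over $\{s<t_{1}<\cdots<t_{k}<t\}$ and rising factorial $(\gamma+m)_{k}$ produces the stated negative-binomial form with ratio $(s+1)/(t+1)$.

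The main obstacle is the careful derivation of \eqref{MasterFormula} in part (i): because the intensity depends on the running count $N_{t-}$, one must justify that the conditional law of each $T_{j}$ given $T_{1},\ldots,T_{j-1}$ is governed by the hazard rate $\lambda((\gamma+j-1)/(\cdot+1))$ alone, with no further dependence on earlier jump times beyond determining the starting count. Once this stepwise independence of the conditional survival dynamics is in place, parts (ii) and (iii) reduce to the routine integrations sketched above.
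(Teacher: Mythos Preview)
Your proposal is correct and follows essentially the same route as the paper: part (i) is obtained from the observation that between jumps the intensity is a deterministic function of time (the paper simply asserts this ``from the definition of our model'' without the inductive/time-change justification you sketch), part (ii) is the $k=0$ specialization, and part (iii) is the same direct computation---your telescoping of the exponent and the substitution $u_{j}=(t_{j}+1)^{\alpha}$ make explicit exactly the simplification the paper performs to reach $\frac{1}{k!}[(t+1)^{\alpha}-1]^{k}$. If anything, your treatment of (i) is more careful than the paper's one-line appeal to the model's definition.
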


For a standard Poisson process $N_{t}$ with constant intensity $\lambda$, the tail probability $\mathbb{P}(N_{t}\geq\ell)$
has the asymptotics $\lim_{\ell\rightarrow\infty}\frac{1}{\ell\log\ell}\log\mathbb{P}(N_{t}\geq\ell)=-1$.
What is the asymptotics for the tail probabilities in our model? In the next result, we will show 
that if $\lambda(\cdot)$ is asymptotically linear, then unlike the standard Poisson process, we have exponential tails
for $\mathbb{P}(N_{t}\geq\ell)$ as $\ell$ goes to infinity.

\begin{theorem}\label{LinearTail}
Assume that $\lim_{z\rightarrow\infty}\frac{\lambda(z)}{z}=\alpha\in(0,\infty)$. Then, for any fixed $t>0$,
\begin{equation}\label{TailLimit}
\lim_{\ell\rightarrow\infty}\frac{1}{\ell}\log\mathbb{P}(N_{t}\geq\ell)=\log\left(1-\frac{1}{(t+1)^{\alpha}}\right).
\end{equation}
\end{theorem}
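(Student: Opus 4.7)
The strategy is to sandwich $\mathbb{P}(N_t\geq\ell)$ between the tails of two linear comparison processes for which Theorem~\ref{MarginalProp}(iii) supplies an exact negative binomial distribution. Since $\mathrm{NB}(r,p)$ has pmf $\binom{k+r-1}{k}(1-p)^k p^r$, a Stirling estimate gives $\lim_{\ell\to\infty}\ell^{-1}\log\mathbb{P}(\mathrm{NB}(r,p)\geq\ell)=\log(1-p)$ for any fixed $r>0$; this universal tail rate is what drives both halves of the theorem.

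\textbf{Upper bound.} Since $\lambda(z)/z\to\alpha$ and $\lambda$ is continuous, for each $\epsilon>0$ there is a constant $C_\epsilon$ with $\lambda(z)\leq(\alpha+\epsilon)z+C_\epsilon$ for all $z\geq 0$. Set $\gamma^+:=\gamma+C_\epsilon(t+1)/(\alpha+\epsilon)$ and, using the Poisson random measure thinning representation, define $\bar N$ on $[0,t]$ from the same driving measure as $N$ with intensity $\bar\lambda_s=(\alpha+\epsilon)(\bar N_{s-}+\gamma^+)/(s+1)$. A one-line check shows that $\bar\lambda_s\geq\lambda_s$ whenever $\bar N_{s-}\geq N_{s-}$, since $(\alpha+\epsilon)(\gamma^+-\gamma)/(s+1)\geq C_\epsilon$ for $s\leq t$; the thinning coupling then gives $N_s\leq\bar N_s$ on $[0,t]$. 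By Theorem~\ref{MarginalProp}(iii), $\bar N_t\sim\mathrm{NB}(\gamma^+,(t+1)^{-(\alpha+\epsilon)})$, so $\limsup_\ell\ell^{-1}\log\mathbb{P}(N_t\geq\ell)\leq\log(1-(t+1)^{-(\alpha+\epsilon)})$; sending $\epsilon\downarrow 0$ finishes this direction.

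\textbf{Lower bound.} For the reverse, pick $z_\epsilon$ with $\lambda(z)\geq(\alpha-\epsilon)z$ for $z\geq z_\epsilon$, fix $T_0\in(0,t)$, and set $k_0=\lceil z_\epsilon(t+1)\rceil$. Construct $\underline N$ on $[T_0,t]$ from the restriction of the Poisson random measure to $[T_0,t]\times[0,\infty)$, with $\underline N_{T_0}=k_0$ and intensity $(\alpha-\epsilon)(\underline N_{s-}+\gamma)/(s+1)$. On $\{N_{T_0}\geq k_0\}$, the inequality $N_{s-}\geq k_0$ forces $(N_{s-}+\gamma)/(s+1)\geq z_\epsilon$ for every $s\in[T_0,t]$, whence $\underline\lambda_s\leq\lambda_s$ whenever $\underline N_{s-}\leq N_{s-}$, and the coupling gives $\underline N_s\leq N_s$ on $[T_0,t]$. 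The logarithmic time change $\tau:=(\alpha-\epsilon)\log((s+1)/(T_0+1))$ converts $\underline N-k_0$ into a standard linear birth-with-immigration process of rate $m+k_0+\gamma$ in state $m$, so its law at $s=t$ is $\mathrm{NB}(k_0+\gamma,((T_0+1)/(t+1))^{\alpha-\epsilon})$. Because $\underline N|_{[T_0,t]}$ and $N_{T_0}$ depend on disjoint regions of the Poisson measure, they are independent, and
\[
\mathbb{P}(N_t\geq\ell)\geq\mathbb{P}(N_{T_0}\geq k_0)\,\mathbb{P}(\underline N_t\geq\ell).
\]
Here $\mathbb{P}(N_{T_0}\geq k_0)$ is a positive constant not depending on $\ell$, so $\liminf_\ell\ell^{-1}\log\mathbb{P}(N_t\geq\ell)\geq\log(1-((T_0+1)/(t+1))^{\alpha-\epsilon})$; sending $T_0\downarrow 0$ and then $\epsilon\downarrow 0$ yields the matching bound.

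\textbf{Main obstacle.} The delicate step is the lower bound. The pointwise inequality $\lambda(z)\geq(\alpha-\epsilon)z$ only holds for $z\geq z_\epsilon$, so one cannot directly couple $N$ from time $0$ against a linear process of slope $\alpha-\epsilon$. The burn-in device (conditioning on $N_{T_0}\geq k_0$) resolves this, but the resulting comparison process starts at time $T_0>0$ rather than $0$, a situation not immediately covered by Theorem~\ref{MarginalProp}(iii). Identifying its distribution via the time change (or equivalently by solving the MGF PDE along characteristics, cf.\ the computation underlying \eqref{ExampleFormula}) is the main technical input, and the reason that the factor $(T_0+1)/(t+1)$ appears and must be pushed to $1/(t+1)$ at the end.
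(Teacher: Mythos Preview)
Your argument is correct and follows a genuinely different route from the paper's. The paper does not couple at all: it works directly with the integral representation \eqref{MasterFormula} for $\mathbb{P}(N_t=k)$. Since $(\alpha-\epsilon)z\leq\lambda(z)\leq(\alpha+\epsilon)z$ for $z\geq M(\epsilon)$, and since $(\gamma+j-1)/(t_j+1)\geq(\gamma+j-1)/(t+1)\geq M(\epsilon)$ holds for all but a bounded number of indices $j$ (the bound depending only on $\epsilon,\gamma,t$, not on $k$), one can sandwich the integrand in \eqref{MasterFormula} between the corresponding integrands for $\lambda(z)=(\alpha\pm\epsilon)z$, picking up multiplicative constants $C_i(\epsilon,\gamma,t)$ that are uniform in $k$. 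This yields, for every $k$,
\[
C_1e^{-C_3}\Bigl(\frac{\alpha-\epsilon}{\alpha+\epsilon}\Bigr)^{k}\binom{k+\gamma-1}{k}\bigl(1-(t+1)^{-(\alpha+\epsilon)}\bigr)^{k}(t+1)^{-(\alpha+\epsilon)\gamma}
\;\leq\;\mathbb{P}(N_t=k)
\]
and a matching upper bound with $\alpha\pm\epsilon$ swapped; summing over $k\geq\ell$ and sending $\epsilon\downarrow 0$ gives \eqref{TailLimit} in one stroke, with no burn-in time and no auxiliary process.

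The trade-offs are as follows. The paper's argument is shorter and actually delivers pointwise estimates on $\mathbb{P}(N_t=k)$, not just on the tail; on the other hand it leans entirely on the explicit formula \eqref{MasterFormula} and requires checking that the finitely many ``bad'' indices (where $(\gamma+j-1)/(t_j+1)$ may fall below $M(\epsilon)$) contribute only a $k$-independent factor. Your coupling approach is more structural and would transfer to variants of the model without a closed-form pmf; the price is the burn-in device in the lower bound, which introduces $T_0$ and an extra limit. One minor simplification: the law of your $\underline N_t-k_0$ started at time $T_0$ is already recorded in the paper as \eqref{ConditionalFormula} (take $s=T_0$, $m=k_0$, slope $\alpha-\epsilon$), so the time-change identification can be replaced by a direct citation.
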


We have already studied the tail probabilities for $\mathbb{P}(N_{t}\geq\ell)$ when $\lambda(\cdot)$
is asymptotically linear in Theorem \ref{LinearTail}. One can also study the case
when $\lim_{z\rightarrow\infty}\frac{\lambda(z)}{z^{\beta}}=\alpha$, for some $\alpha,\beta>0$. 
When $\beta>1$, $\lambda(z)$ grows super-linearly and there is a positive probability of explosion. 
Therefore in this case, $\mathbb{P}(N_{t}\geq\ell)$ does not vanish to zero as $\ell\rightarrow\infty$.
When $\beta<1$, $\lambda(z)$ grows sub-linearly and $\mathbb{P}(N_{t}\geq\ell)$ does vanish
to zero as $\ell\rightarrow\infty$. The asymptotics of the tail probabilities are studied as follows.

\begin{theorem}\label{SublinearTail}
Assume that $\lim_{z\rightarrow\infty}\frac{\lambda(z)}{z^{\beta}}=\alpha$, for some $\alpha,\beta>0$ and $\beta<1$. Then,
\begin{equation}
\lim_{\ell\rightarrow\infty}\frac{1}{\ell\log\ell}\log\mathbb{P}(N_{t}\geq\ell)=-(1-\beta).
\end{equation}
\end{theorem}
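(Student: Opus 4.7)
The plan is to derive matching asymptotic upper and lower bounds on $\mathbb{P}(N_{t}=k)$ as $k\to\infty$ directly from the exact marginal formula in Theorem \ref{MarginalProp}(i), and then note that on the $\ell\log\ell$ scale the tail $\mathbb{P}(N_{t}\geq\ell)$ has the same logarithmic behavior as $\mathbb{P}(N_{t}=\ell)$. Three ingredients drive the asymptotics: the simplex $\{0<t_{1}<\cdots<t_{k}<t\}$ has volume $t^{k}/k!\sim(te/k)^{k}$; under $\lambda(z)/z^{\beta}\to\alpha$ the product $\prod_{j=1}^{k}\lambda((\gamma+j-1)/(t_{j}+1))$ is of order $(k!)^{\beta}\sim(k/e)^{k\beta}$; and the exponential damping $\exp(-\int\lambda\,ds)$ contributes at most $\exp(-O(k^{\beta}))$, which is negligible compared to $k\log k$ since $\beta<1$. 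The combination $(te/k)^{k}\cdot(k/e)^{k\beta}$ has logarithm $-(1-\beta)k\log k+O(k)$, which is the desired rate.

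For the upper bound, use monotonicity of $\lambda$ and $\lambda(z)\leq C(1+z^{\beta})$ to estimate $\lambda((\gamma+j-1)/(t_{j}+1))\leq C(1+(\gamma+j-1)^{\beta})$, discard the exponential factor, and integrate over the full simplex to obtain
\[
\mathbb{P}(N_{t}=k)\leq\frac{(Ct)^{k}}{k!}\left(\frac{\Gamma(k+\gamma+1)}{\Gamma(\gamma+1)}\right)^{\beta}.
\]
Stirling then yields $\log\mathbb{P}(N_{t}=k)\leq-(1-\beta)k\log k+O(k)$. The tail $\mathbb{P}(N_{t}\geq\ell)$ is bounded by the sum over $k\geq\ell$; since $k\mapsto-(1-\beta)k\log k$ decreases super-geometrically in $k$, the tail sum is dominated by its first term up to an $O(1)$ factor, giving $\limsup_{\ell\to\infty}\frac{1}{\ell\log\ell}\log\mathbb{P}(N_{t}\geq\ell)\leq-(1-\beta)$.

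For the lower bound, restrict the integration to the sub-simplex $\{t/2<t_{1}<\cdots<t_{k}<t\}$ of volume $(t/2)^{k}/k!$. There, $(\gamma+j-1)/(t_{j}+1)\geq(\gamma+j-1)/(t+1)$, so for $j$ beyond some fixed $j_{0}$ (chosen so that this ratio exceeds the threshold beyond which $\lambda(z)\geq(\alpha/2)z^{\beta}$), each factor of the product is at least $(\alpha/2)((\gamma+j-1)/(t+1))^{\beta}$, while the finitely many $j<j_{0}$ factors contribute only a multiplicative constant. The exponential damping is bounded below by $\exp(-t\,\lambda((\gamma+k)/(t/2+1)))\geq\exp(-Ck^{\beta})$, which is again subdominant. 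Applying Stirling gives $\log\mathbb{P}(N_{t}=\ell)\geq-(1-\beta)\ell\log\ell+O(\ell)$, and $\mathbb{P}(N_{t}\geq\ell)\geq\mathbb{P}(N_{t}=\ell)$ supplies the matching $\liminf$. The main technical obstacle is verifying that the dominant $-(1-\beta)k\log k$ term appears with the identical constant in both bounds, which is a careful bookkeeping with Stirling applied to both $k!$ and $\Gamma(k+\gamma+1)^{\beta}$; the $O(k)$ slack then absorbs all power and constant factors as well as the $O(k^{\beta})$ loss from the exponential damping.
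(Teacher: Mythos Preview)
Your proposal is correct and follows essentially the same route as the paper: both proofs start from the marginal formula \eqref{MasterFormula}, obtain the upper bound by discarding the exponential and bounding $\prod_{j}\lambda((\gamma+j-1)/(t_{j}+1))$ by $\prod_{j}(\gamma+j-1)^{\beta}$ times constants, obtain the lower bound by bounding the exponential below by $e^{-O(k^{\beta})}$ and the product below by $\prod_{j}((\gamma+j-1)/(t+1))^{\beta}$, and then read off $-(1-\beta)$ from Stirling applied to $k!$ against $(\prod_{j}(\gamma+j-1))^{\beta}$. The only cosmetic differences are that the paper first reduces to the exact model $\lambda(z)=\alpha z^{\beta}$ (as in the proof of Theorem~\ref{LinearTail}) and then integrates over the full simplex in both directions, whereas you work directly with the asymptotic hypothesis and restrict to the sub-simplex $\{t/2<t_{1}<\cdots<t_{k}<t\}$ for the lower bound; neither change affects the argument.
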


\begin{remark}
For any $\lambda(z)$ that grows slower than any polynomial growth, the tail is the same as the Poisson tail
from Theorem \ref{SublinearTail}. 
For example, for $\lambda(z)$ uniformly bounded, $\lambda(z)=[\log(1+z)]^{\beta}$, $\beta>0$, they
all give the Poisson tail $\lim_{\ell\rightarrow\infty}\frac{1}{\ell\log\ell}\log\mathbb{P}(N_{t}\geq\ell)=-1$.
\end{remark}

\section{Proofs}\label{ProofSection}

\subsection{Proofs of Results in Section \ref{LLNSection}}

\begin{proof}[Proof of Theorem \ref{LLNThm}]
Let us use Poisson embedding. Let $N^{(0)}$ be the Poisson process with intensity $\lambda(0)$.
Conditional on $N^{(0)}$, let $N^{(1)}$ be the inhomogeneous Poisson process with intensity
\begin{equation}
\lambda\left(\frac{N^{(0)}_{t-}}{t+1}\right)-\lambda(0),
\end{equation}
at time $t$. Inductively, conditional on $N^{(0)},N^{(1)},\ldots,N^{(k)}$, $N^{(k+1)}$ is an inhomogeneous Poisson process
with intensity
\begin{equation}
\lambda\left(\frac{N^{(0)}_{t-}+N^{(1)}_{t-}+\cdots N^{(k)}_{t-}}{t+1}\right)-\lambda\left(\frac{N^{(0)}_{t-}+N^{(1)}_{t-}+\cdots N^{(k-1)}_{t-}}{t+1}\right),
\end{equation}
at time $t$. Therefore, we can compute that $\mathbb{E}[N_{t}^{(0)}]=\lambda(0)t$,
\begin{align}
\frac{\mathbb{E}[N_{t}^{(1)}]}{t}&=\frac{1}{t}\mathbb{E}\left[\int_{0}^{t}\lambda\left(\frac{N^{(0)}_{s-}}{s+1}\right)-\lambda(0)ds\right]
\\
&\leq\alpha\frac{1}{t}\int_{0}^{t}\frac{\mathbb{E}[N^{(0)}_{s-}]}{s+1}ds
\nonumber
\\
&\leq\alpha\lambda(0).
\nonumber
\end{align}
and inductively, 
\begin{equation}
\frac{1}{t}\mathbb{E}[N_{t}^{(k)}]\leq\alpha^{k}\lambda(0),\qquad k\in\mathbb{N}.
\end{equation}
Hence, $\mathbb{E}[\sum_{k=0}^{\infty}N^{(k)}_{t}]\leq\sum_{k=0}^{\infty}\alpha^{k}\lambda(0)t<\infty$ 
since $0<\alpha<1$
and $N_{t}=\sum_{k=0}^{\infty}N^{(k)}_{t}$ is well defined a.s. 
Moreover, the compensator of $N_{t}$ is 
\begin{equation}
\sum_{k=0}^{\infty}\int_{0}^{t}\left[\lambda\left(\frac{\sum_{j=0}^{k}N^{(j)}_{s-}}{s+1}\right)
-\lambda\left(\frac{\sum_{j=0}^{k-1}N^{(j)}_{s-}}{s+1}\right)\right]ds=\int_{0}^{t}\lambda\left(\frac{N_{t-}}{t+1}\right)ds
\end{equation}
and hence $N_{t}$ is the self-exciting process we are interested to study. 
By the law of large numbers for Poisson processes,
\begin{equation}
\frac{N^{(0)}_{t}}{t}\rightarrow\lambda(0),
\end{equation}
a.s. as $t\rightarrow\infty$. 
We use induction and assume that $\frac{N^{(j)}_{t}}{t}\rightarrow\lambda^{(j+1)}(0)-\lambda^{(j)}(0)$
a.s. as $t\rightarrow\infty$ for any $j=0,1,\ldots,k-1$. 
Then, the compensator $\Lambda^{(k)}_{t}$ of $N^{(k)}_{t}$ satisfies
\begin{align}
&\frac{1}{t}\int_{0}^{t}\lambda\left(\frac{N^{(0)}_{s-}+N^{(1)}_{s-}+\cdots N^{(k-1)}_{s-}}{s+1}\right)
-\lambda\left(\frac{N^{(0)}_{s-}+N^{(1)}_{s-}+\cdots N^{(k-2)}_{s-}}{s+1}\right)ds
\\
&\qquad\qquad\qquad
\rightarrow\lambda^{(k+1)}(0)-\lambda^{(k)}(0),
\nonumber
\end{align}
a.s. as $t\rightarrow\infty$.
On the other hand, $M^{(k)}_{t}:=N^{(k)}_{t}-\Lambda^{(k)}_{t}$ is a martingale and
\begin{equation}
\frac{M^{(k)}_{t}}{t}\rightarrow 0,
\end{equation}
a.s. as $t\rightarrow\infty$. Hence, we proved that
\begin{equation}
\frac{1}{t}\sum_{j=0}^{k}N^{(j)}_{t}\rightarrow\lambda^{(k+1)}(0),
\end{equation}
a.s. as $t\rightarrow\infty$. As $k\rightarrow\infty$, $\lambda^{(k+1)}(0)\rightarrow x^{\ast}$. 
For any $\epsilon>0$, there exists $K\in\mathbb{N}$ so that for any $k\geq K$,
$|\lambda^{(k+1)}(0)-x^{\ast}|<\frac{\epsilon}{4}$. Thus,
\begin{align}
&\limsup_{t\rightarrow\infty}\mathbb{P}\left(\left|\sum_{k=0}^{\infty}\frac{N^{(k)}_{t}}{t}-x^{\ast}\right|\geq\epsilon\right)
\\
&\leq\limsup_{t\rightarrow\infty}\mathbb{P}
\left(\left|\sum_{k=0}^{K}\frac{N^{(k)}_{t}}{t}-\lambda^{(K+1)}(0)\right|\geq\frac{\epsilon}{2}\right)
\nonumber
\\
&\qquad\qquad\qquad\qquad
+\limsup_{t\rightarrow\infty}\mathbb{P}\left(\left|\sum_{k=K+1}^{\infty}\frac{N^{(k)}_{t}}{t}
-(x^{\ast}-\lambda^{(k+1)}(0))\right|\geq\frac{\epsilon}{2}\right)
\nonumber
\\
&\leq\limsup_{t\rightarrow\infty}\mathbb{P}\left(\sum_{k=K+1}^{\infty}\frac{N^{(k)}_{t}}{t}
\geq\frac{\epsilon}{4}\right)
\nonumber
\\
&\leq\frac{\lambda(0)\sum_{k=K+1}^{\infty}\alpha^{k}}{(\epsilon/4)}.
\nonumber
\end{align}
Since it holds for any $K\in\mathbb{N}$, we get the desired result by letting $K\rightarrow\infty$.

Finally, if we further assume that $\lambda(\cdot)\leq C_{0}<\infty$ for some
universal constant $c_{0}$ and $C_{0}$, then by the large devations results 
in Theorem \ref{LDPThm} and Proposition \ref{ZeroProp},
we get the almost sure convergence.
\end{proof}

\begin{proof}[Proof of Theorem \ref{StochDeter}]
Let us define $Y_{t}=\frac{N_{t}}{t+1}$. It is easy to check that
\begin{equation}
dY_{t}=\frac{\lambda(Y_{t-})-Y_{t}}{t+1}dt+\frac{dM_{t}}{t+1},
\end{equation}
where $M_{t}=N_{t}-\int_{0}^{t}\lambda_{s}ds$ is a martingale. Let us define $\bar{Y}_{t}$ as the deterministic solution of
\begin{equation}
d\bar{Y}_{t}=\frac{\lambda(\bar{Y}_{t})-\bar{Y}_{t}}{t+1}dt.
\end{equation}
We assume that $\bar{Y}_{0}=Y_{0}$. We have $Y_{t}^{2}=\frac{N_{t}^{2}}{(t+1)^{2}}$. Applying It\^{o}'s formula for jump processes,
\begin{align}
\label{eq:Y_t^2 SDE}
&dY_{t}=-\frac{Y_{t}}{t+1}dt+\frac{dN_{t}}{t+1},
\\
&d(Y_{t}^{2})=-\frac{2N_{t}^{2}}{(t+1)^{3}}dt
+\frac{(2N_{t-}+1)dN_{t}}{(t+1)^{2}}
=\frac{-2Y_{t}^{2}}{t+1}dt+\left(\frac{2Y_{t-}}{t+1}+\frac{1}{(t+1)^{2}}\right)dN_{t},
\nonumber
\\
&d\bar{Y}_{t}=\frac{\lambda(\bar{Y}_{t})-\bar{Y}_{t}}{t+1}dt,
\nonumber
\\
&d(\bar{Y}_{t})^{2}=2\bar{Y}_{t}\frac{\lambda(\bar{Y}_{t})-\bar{Y}_{t}}{t+1}dt.
\nonumber
\end{align}
Therefore, we can compute that
\begin{align}
&d(Y_{t}-\bar{Y}_{t})^{2}
=-\frac{2(Y_{t}-\bar{Y}_{t})^{2}}{t+1}dt
+2\frac{\bar{Y}_{t}}{t+1}[\lambda(\bar{Y}_{t})-\lambda(Y_{t})]dt
-2\frac{Y_{t}}{t+1}[\lambda(\bar{Y}_{t})-\lambda(Y_{t})]dt
\\
&\qquad\qquad\qquad\qquad
+\frac{\lambda(Y_{t})}{(t+1)^{2}}dt
+\left[\left(\frac{2Y_{t-}}{t+1}+\frac{1}{(t+1)^{2}}\right)-\frac{2\bar{Y}_{t}}{t+1}\right]dM_{t}.
\nonumber
\end{align}
Define $m(t):=\mathbb{E}[(Y_{t}-\bar{Y}_{t})^{2}]$. Hence,
\begin{align}
\frac{dm(t)}{dt}&=-\frac{2m(t)}{t+1}
+\frac{2}{t+1}\mathbb{E}[(\bar{Y}_{t}-Y_{t})(\lambda(\bar{Y}_{t})-\lambda(Y_{t}))]
+\frac{\mathbb{E}[\lambda(Y_{t})]}{(t+1)^{2}}
\\
&\leq 2(\alpha-1)\frac{m(t)}{t+1}+\frac{\mathbb{E}[\lambda(Y_{t})]}{(t+1)^{2}}.
\nonumber
\end{align}
Since $\lambda(\cdot)$ is $\alpha$-Lipschitz for some $0<\alpha<1$, there exist some $c_{1},c_{2}>0$
and $c_{2}<1$ so that $\lambda(z)\leq c_{1}+c_{2}z$. Under this condition, 
by Proposition \ref{Expectation}, $\frac{\mathbb{E}[N_{t}]}{t+1}\leq K$ uniformly in $t$ for some $K>0$
and $\mathbb{E}[\lambda_{t}]=\mathbb{E}[\lambda(\frac{N_{t-}}{t+1})]\leq c_{1}+c_{2}K$ uniformly in $t$.
Therefore,
\begin{equation}
\frac{dm(t)}{dt}\leq\frac{2(\alpha-1)}{t+1}m(t)+\frac{c_{1}+c_{2}K}{(t+1)^{2}},\qquad m(0)=0.
\end{equation}
It is easy to verify that the solution to the ODE
\begin{equation}
\frac{dm(t)}{dt}=\frac{2(\alpha-1)}{t+1}m(t)+\frac{c_{1}+c_{2}K}{(t+1)^{2}},\qquad m(0)=0,
\end{equation}
when $\alpha\neq\frac{1}{2}$ is given by
\begin{equation}
m(t)=\frac{c_{1}+c_{2}K}{1-2\alpha}\left[\frac{1}{t+1}-\frac{1}{(t+1)^{2(1-\alpha)}}\right].
\end{equation}
When $\alpha=\frac{1}{2}$, the solution is given by
\begin{equation}
m(t)=\frac{(c_{1}+c_{2}K)\log(t+1)}{t+1}.
\end{equation}
Therefore, we proved \eqref{AsympBound}
and it is clear that $m(t)\rightarrow\infty$ as $t\rightarrow\infty$.
Since $\bar{Y}_{t}$ converges to the unique fixed point deterministically, we conclude
that $\frac{N_{t}}{t+1}$ converges to the same value in the $L^{2}(\mathbb{P})$ norm.
\end{proof}

\begin{proof}[Proof of Theorem \ref{NoFixed}]
For any $x$ that is not a fixed point of the equation $x=\lambda(x)$, then either
$x>\lambda(x)$ or $x<\lambda(x)$. Let us assume without loss of generality that $\lambda(x)>x$. By continuity of $\lambda(x)$,
there exists a sufficiently small $\epsilon>0$ such that $\lambda(x-\epsilon)>x+\epsilon$. 
We claim that
\begin{equation}\label{ZeroIdentity}
\mathbb{P}\left(\omega:\exists N(\omega), \forall t\geq N(\omega),x-\epsilon<\frac{N_{t-}}{t+1}<x+\epsilon\right)=0.
\end{equation}
Notice that if $x-\epsilon<\frac{N_{t-}}{t+1}<x+\epsilon$ for any $t\geq N(\omega)$, then, from
the monotonicity of the function $\lambda(x)$, we have
\begin{equation}
\lambda_{t}=\lambda\left(\frac{N_{t-}}{t+1}\right)\in[\lambda(x-\epsilon),\lambda(x+\epsilon)],
\end{equation}
which is bounded below by $\lambda(x-\epsilon)$. But for a standard Poisson process $N_{t}$ with constant intensity
$\lambda(x-\epsilon)$, $\frac{N_{t-}}{t+1}\rightarrow\lambda(x-\epsilon)>x+\epsilon$ almost surely,
which implies \eqref{ZeroIdentity}. And \eqref{ZeroIdentity} implies that
\begin{equation}
\lim_{t\rightarrow\infty}\mathbb{P}\left(\frac{N_{t}}{t}\in(x-\epsilon,x+\epsilon)\right)
=\lim_{t\rightarrow\infty}
\mathbb{P}\left(\frac{N_{t-}}{t+1}\in(x-\epsilon,x+\epsilon)\right)=0.
\end{equation}
Note that for the $\epsilon$ above, it depends on $x$. Now, consider any $I=[a,b]$ not containing
any fixed point of $x=\lambda(x)$ and assume $\lambda(x)>x$ for any $x\in[a,b]$. 
Since $x\mapsto\lambda(x)$ is continuous, there exists $\epsilon>0$
sufficiently small so that uniformly in $x\in[a,b]$, $\lambda(x-\epsilon)>x+\epsilon$.
Hence the proof is complete.
\end{proof}

\begin{proof}[Proof of Theorem \ref{StablePositive}]
If $x^*$ is the unique and stable fixed point of $\lambda(x)$, then $\frac{N_t}{t}\to x^*$ as $t\to\infty$ by using the previous result. Therefore, 
it is sufficient to show the following lemma.
\begin{lemma}
\label{lma: Poisson boundedness}
Given that $x^*$ a stable fixed point of $\lambda(x)$, there exists $\epsilon>0$ and $t_0>0$ such that conditional on 
$\frac{N_{t_0}}{t_0} \in (x^*-\epsilon, x^*+\epsilon)$,
\begin{equation}
	\mathbb{P}\left(\frac{N_{t}}{t} \in (x^*-2\epsilon, x^*+2\epsilon), \forall t\geq t_0 \right)>0
\end{equation}
\end{lemma}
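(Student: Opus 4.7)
The plan is to introduce a quadratic Lyapunov function centered at $x^{\ast}$ and combine the contractive drift produced by the stability condition $\lambda'(x^{\ast})<1$ with an optional stopping estimate. Set $Y_t:=N_t/(t+1)$, which satisfies $dY_t=-Y_t/(t+1)\,dt+dN_t/(t+1)$, and $V(y):=(y-x^{\ast})^2$. Since $\lambda\in C^1$ and $\lambda'(x^{\ast})\in(0,1)$, I can pick $\epsilon>0$ so small that on $J:=(x^{\ast}-2\epsilon,x^{\ast}+2\epsilon)$ the interval contains no other fixed point,
\begin{equation*}
(y-x^{\ast})(\lambda(y)-y)\le -c(y-x^{\ast})^2,\qquad c:=(1-\lambda'(x^{\ast}))/2\in(0,1/2),
\end{equation*}
and $C:=\sup_{y\in J}\lambda(y)<\infty$. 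Let $\tau:=\inf\{t\ge t_0:Y_t\notin J\}$.

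The It\^o formula for jump processes, exactly as in the proof of Theorem \ref{StochDeter}, yields
\begin{equation*}
dV(Y_t)=\left[\frac{2(Y_t-x^{\ast})(\lambda(Y_t)-Y_t)}{t+1}+\frac{\lambda(Y_t)}{(t+1)^2}\right]dt+dM^V_t,
\end{equation*}
with $M^V$ a local martingale, so on $[t_0,\tau)$ the drift is at most $-2cV(Y_t)/(t+1)+C/(t+1)^2$. Multiplying by the integrating factor $(t+1)^{2c}$, a direct computation shows that the drift of $(t+1)^{2c}V(Y_t)$ is dominated by $C/(t+1)^{2-2c}$, and since $c<1/2$ the time integral $\int_{t_0}^{\infty}C/(s+1)^{2-2c}\,ds$ converges. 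Consequently
\begin{equation*}
Z_t:=(t+1)^{2c}V(Y_t)-\int_{t_0}^{t}\frac{C}{(s+1)^{2-2c}}\,ds
\end{equation*}
stopped at $\tau$ is a local supermartingale starting from $Z_{t_0}=(t_0+1)^{2c}V(Y_{t_0})$. Localising (the pre-$\tau$ jumps of $V(Y_t)$ are of size at most $3\epsilon/(t_0+1)$, giving easy brackets on the martingale part) and applying optional stopping yields
\begin{equation*}
\mathbb{E}\bigl[(t\wedge\tau+1)^{2c}V(Y_{t\wedge\tau})\bigr]\le (t_0+1)^{2c}V(Y_{t_0})+\frac{C}{(1-2c)(t_0+1)^{1-2c}}.
\end{equation*}

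On $\{\tau\le t\}$ the exit forces $|Y_\tau-x^{\ast}|\ge 2\epsilon$, hence $V(Y_\tau)\ge 4\epsilon^2$, so the left-hand side dominates $(t_0+1)^{2c}\cdot 4\epsilon^2\cdot\mathbb{P}(\tau\le t)$. Combined with $V(Y_{t_0})<\epsilon^2$ (valid, up to an inessential shrinking of $\epsilon$, under the hypothesis $N_{t_0}/t_0\in(x^{\ast}-\epsilon,x^{\ast}+\epsilon)$, since $Y_{t_0}-N_{t_0}/t_0=O(1/t_0)$), this gives
\begin{equation*}
\mathbb{P}(\tau\le t)\le \frac{1}{4}+\frac{C}{4(1-2c)\epsilon^2(t_0+1)}.
\end{equation*}
Taking $t_0$ large enough that the second term is below $1/2$ and letting $t\to\infty$, I conclude $\mathbb{P}(\tau=\infty)>0$. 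A final adjustment (shrinking the stopping window from $2\epsilon$ to some $\epsilon'<2\epsilon$) translates the result from $Y_t$ to $N_t/t$, yielding the lemma. The main technical obstacle I anticipate is the clean localisation step justifying optional stopping for $Z_{t\wedge\tau}$, i.e., exhibiting a sequence of bounded stopping times along which the martingale contribution has vanishing expectation; everything else is quadrature that exploits the contractive drift inherited from $\lambda'(x^{\ast})<1$.
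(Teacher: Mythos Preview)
Your argument is correct, and the localisation concern you flag is in fact harmless: on $[t_0,\tau]$ the process $Y_{t\wedge\tau}$ stays in $\bar J$ up to a single overshoot of size at most $1/(t_0+1)$, so $(t\wedge\tau+1)^{2c}V(Y_{t\wedge\tau})$ is bounded for each fixed $t$, the martingale part has bounded jumps, and $Z_{t\wedge\tau}$ is an honest supermartingale rather than merely a local one.

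Your route, however, is genuinely different from the paper's. The paper does not use a Lyapunov function at all; instead it exploits stability to find $\epsilon,\delta>0$ with $x^{\ast}-\epsilon<\lambda(x)<x^{\ast}+\epsilon$ whenever $x\in(x^{\ast}-\epsilon-\delta,x^{\ast}+\epsilon+\delta)$, and then constructs by thinning two homogeneous Poisson processes $N^1,N^2$ with constant rates $x^{\ast}-\epsilon$ and $x^{\ast}+\epsilon$ that sandwich $N$ up to the exit time $\tau$. The conclusion $\mathbb{P}(\tau=\infty)>0$ then follows from the strong law of large numbers for $N^1/t$ and $N^2/t$, which forces $\mathbb{P}(t_0\le\tau_i<\infty)\to0$ as $t_0\to\infty$ for the corresponding Poisson exit times. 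Your supermartingale approach is the more systematic one---it is the standard stability-of-Markov-processes machinery and it yields an explicit, non-asymptotic bound $\mathbb{P}(\tau<\infty)\le 1/4+O(1/t_0)$---whereas the paper's coupling is shorter and more elementary, leaning only on the SLLN for Poisson processes and avoiding any It\^o calculus or optional stopping.
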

If Lemma \ref{lma: Poisson boundedness} holds, as long as $\frac{N_{t}}{t} \in (x^*-2\epsilon, x^*+2\epsilon)$, $\forall t\geq t_0$, we can modify 
$\lambda(x)$ outside $(x^*-2\epsilon, x^*+2\epsilon)$ so that $x^*$ is the unique fixed point. In addition, taking into account that 
$\frac{N_t}{t}$ is Markov and the event that $\frac{N_{t_0}}{t_0} \in (x^*-\epsilon, x^*+\epsilon)$ for some $t_0>0$ has positive probability,
the proof is completed.
\end{proof}

\begin{proof}[Proof of Lemma \ref{lma: Poisson boundedness}]
Because $x^{\ast}$ is a stable fixed point, $|\lambda'(x^{\ast})|<1$, 
and we can find $\epsilon>0$ and $\delta>0$, such that $x^* - \epsilon < \lambda(x) < x^* + \epsilon$ for 
$x\in (x^*-\epsilon-\delta, x^*+\epsilon+\delta)$. 

Define a stopping time $\tau = \inf\{t\geq t_0: \frac{N_{t}}{t} \notin (x^*-\epsilon-\delta, x^*+\epsilon+\delta)\}$.
By using the coupling argument, we can construct two Poisson processes $N^1_t$ and $N^2_t$ with the intensity $\lambda_1=x^* - \epsilon$ 
and $\lambda_2=x^* + \epsilon$, respectively, such that 
\begin{equation}
	N^1_t \leq N_t \leq N^2_t,\quad  t\leq \tau
\end{equation}
almost surely. Therefore, if $\tau_1 = \inf\{t\geq t_0: \frac{N^1_{t}}{t} \geq x^*+\epsilon+\delta\}$ and 
$\tau_2 = \inf\{t\geq t_0: \frac{N^2_{t}}{t} \leq x^*-\epsilon-\delta\}$, we have $\tau \geq \tau_1 \wedge \tau_2$ almost surely and 
\begin{equation*}
	\mathbb{P}(t_0 \leq \tau < \infty) \leq \mathbb{P}(t_0 \leq  \tau_1 \wedge \tau_2 < \infty) 
	\leq \mathbb{P}(t_0 \leq \tau_1 < \infty) + \mathbb{P}(t_0 \leq \tau_2 < \infty)
\end{equation*}
By the strong law of the large numbers $\mathbb{P}(t_0 \leq \tau_1 < \infty)$, $\mathbb{P}(t_0 \leq \tau_2 < \infty)\to 0$ as $t_0\to\infty$.
Finally, by letting $\delta<\epsilon$,
\begin{align*}
	&\mathbb{P}\left(\frac{N_{t}}{t} \in (x^*-2\epsilon, x^*+2\epsilon), \forall t\geq t_0 \right)
	\geq 1 - \mathbb{P}(t_0\leq \tau < \infty)\\
	&\quad \geq 1 - \mathbb{P}(t_0 \leq \tau_1 < \infty) - \mathbb{P}(t_0 \leq \tau_2 < \infty) > 0
\end{align*}
for sufficiently large $t_0$ and we complete the proof. 
\end{proof}

\begin{proof}[Proof of Theorem \ref{UnstablePositive}]
Let $x^{\ast}$ be a strictly unstable fixed point. There exists a sufficiently small neighborhood $C$ containing $x^{\ast}$
so that for any $x\in C$ and $x\geq x^{\ast}$, we have $\lambda(x)\geq x$ and for any $x\in C$ and $x\leq x^{\ast}$, we have $\lambda(x)\leq x$.
Let $Y_{t}=\frac{N_{t}}{t+1}$, where $N_{t}$ is the simple point process with intensity $\lambda(\frac{N_{t-}+\gamma}{t+1})$ at time $t$
and let $\tilde{Y}_{t}=\frac{\tilde{N}_{t}}{t+1}$, where $\tilde{N}_{t}$ is the simple point process
with intensity $\frac{\tilde{N}_{t-}+\gamma}{t+1}$ at time $t$.
Finally, we introduce the process $\hat{Y}_{t}=\frac{\hat{N}_{t}+\gamma}{t+1}$ so that the intensity of $\hat{N}_{t}$
is $\lambda(\frac{\hat{N}_{t-}+\gamma}{t+1})$ when $\hat{Y}_{t}\notin C$ and the intensity of $\hat{N}_{t}$
is $\frac{\hat{N}_{t-}+\gamma}{t+1}$ when $\hat{Y}_{t}\in C$. Since $\lambda(x)\geq x$ and for any $x\in C$ and $x\geq x^{\ast}$, 
and $\lambda(x)\leq x$ for any $x\in C$ and $x\leq x^{\ast}$, 
it is clear that $\mathbb{P}(\lim_{t\rightarrow\infty}Y_{t}=x^{\ast})\leq\mathbb{P}(\lim_{t\rightarrow\infty}\hat{Y}_{t}=x^{\ast})$.
On the other hand, for the process $\tilde{Y}_{t}$, we proved in Proposition \ref{GammaLimit} that
$\mathbb{P}(\lim_{t\rightarrow\infty}\tilde{Y}_{t}=\chi(\gamma))=1$, where $\chi(\gamma)$ is a gamma random variable with shape $\gamma$ and scale $1$.
Therefore, for any $x\in\mathbb{R}^{+}$ and hence $x^{\ast}$, $\mathbb{P}(\lim_{t\rightarrow\infty}\tilde{Y}_{t}=x)=0$.
Since $\hat{Y}_{t}$ shares the same dynamics as $\tilde{Y}_{t}$ in $C$, 
we have $\mathbb{P}(\lim_{t\rightarrow\infty}\hat{Y}_{t}=x^{\ast})=0$, which implies that $\mathbb{P}(\lim_{t\rightarrow\infty}Y_{t}=x^{\ast})=0$.
\end{proof}

\subsection{Proofs of Results in Section \ref{LDPSection}}

\begin{proof}[Proof of Theorem \ref{LDPThm}]
To prove the lower bound, it suffices to prove that (since we have the superexponential estimates \eqref{super1} and \eqref{super2})
\begin{equation}
\liminf_{T\rightarrow\infty}\frac{1}{T}\log\mathbb{P}\left(\frac{N_{\alpha_{1}T}}{T}\in B_{\epsilon}(x_{1}),
\ldots,\frac{N_{\alpha_{n}T}}{T}\in B_{\epsilon}(x_{n})\right)\geq -I(f),
\end{equation}
where $B_{\epsilon}(x_{i})$ are open balls centered at $x_{i}$ with radius $\epsilon>0$ and $f(\alpha)$, $0\leq\alpha\leq 1$ 
is piecewise linear such that $f(\alpha_{j})=x_{j}$ for any $j$, where $0=\alpha_{0}<\alpha_{1}<\cdots<\alpha_{n}=1$.

We tilt $\lambda_{s}$ to $\frac{x_{j}-x_{j-1}}{\alpha_{j}-\alpha_{j-1}}$ for $\alpha_{j-1}T<s\leq\alpha_{j}T$. 
Under the new measure, let us use induction. Assume that $\frac{N_{\alpha_{j-1}T}}{T}\rightarrow x_{j-1}$.
Then, if we do not tilt on $[\alpha_{j-1}t,\alpha_{j}t]$, then, we get
\begin{equation}
\frac{N_{\alpha_{j}T}}{T}=\frac{N_{\alpha_{j-1}T}}{T}+\frac{N[\alpha_{j-1}T,\alpha_{j}T]}{T}\rightarrow x_{j-1}
+\frac{x_{j}-x_{j-1}}{\alpha_{j}-\alpha_{j-1}}(\alpha_{j}-\alpha_{j-1})=x_{j}.
\end{equation}

Let $\hat{\mathbb{P}}$ denote the tilted probability measure and
\begin{equation}
A_{T}:=\left\{\frac{N_{\alpha_{1}T}}{T}\in B_{\epsilon}(x_{1}),
\ldots,\frac{N_{\alpha_{n}T}}{T}\in B_{\epsilon}(x_{n})\right\}.
\end{equation}
The tilted probability measure $\hat{\mathbb{P}}$ is absolutely continuous w.r.t. $\mathbb{P}$ and we
have the following Girsanov formula, (For the theory of absolute continuity
for point processes and its Girsanov formula, we refer to Lipster and Shiryaev \cite{Lipster}.)
\begin{equation}
\frac{d\hat{\mathbb{P}}}{d\mathbb{P}}\bigg|_{\mathcal{F}_{T}}
=\exp\left\{\sum_{j=1}^{n}\int_{\alpha_{j-1}T}^{\alpha_{j}T}
\log\left(\frac{\frac{x_{j}-x_{j-1}}{\alpha_{j}-\alpha_{j-1}}}{\lambda_{s}}\right)dN_{s}+\int_{\alpha_{j-1}T}^{\alpha_{j}T}
\lambda_{s}-\left(\frac{x_{j}-x_{j-1}}{\alpha_{j}-\alpha_{j-1}}\right)ds\right\}.
\end{equation}
By Jensen's inequality, we have
\begin{align}
\frac{1}{T}\log\mathbb{P}\left(A_{T}\right)
&=\frac{1}{T}\log\int_{A_{T}}\frac{d\mathbb{P}}{d\hat{\mathbb{P}}}d\hat{\mathbb{P}}
\\
&=\frac{1}{T}\log\hat{\mathbb{P}}\left(A_{T}\right)
+\frac{1}{T}\log\left[\frac{1}{\hat{\mathbb{P}}\left(A_{T}\right)}
\int_{A_{T}}\frac{d\mathbb{P}}{d\hat{\mathbb{P}}}d\hat{\mathbb{P}}\right]\nonumber
\\
&\geq\frac{1}{T}\log\hat{\mathbb{P}}\left(A_{T}\right)-\frac{1}{\hat{\mathbb{P}}\left(A_{T}\right)}
\cdot\frac{1}{T}\hat{\mathbb{E}}\left[1_{A_{T}}\log\frac{d\hat{\mathbb{P}}}{d\mathbb{P}}\right].\nonumber
\end{align}
Hence, we have
\begin{align}
&\liminf_{T\rightarrow\infty}\frac{1}{T}\log\mathbb{P}\left(\frac{N_{\alpha_{1}T}}{T}\in B_{\epsilon}(x_{1}),
\ldots,\frac{N_{\alpha_{n}T}}{T}\in B_{\epsilon}(x_{n})\right)
\\
&\geq-\lim_{T\rightarrow\infty}\frac{1}{T}\hat{\mathbb{E}}\left[\sum_{j=1}^{n}\int_{\alpha_{j-1}T}^{\alpha_{j}T}
\log\left(\frac{\frac{x_{j}-x_{j-1}}{\alpha_{j}-\alpha_{j-1}}}{\lambda_{s}}\right)dN_{s}+\int_{\alpha_{j-1}T}^{\alpha_{j}T}
\lambda_{s}-\left(\frac{x_{j}-x_{j-1}}{\alpha_{j}-\alpha_{j-1}}\right)ds\right]\nonumber
\\
&=-\sum_{j=1}^{n}
\log\left(\frac{\frac{x_{j}-x_{j-1}}{\alpha_{j}-\alpha_{j-1}}}{\lambda(x_{j}/\alpha_{j})}\right)(x_{j}-x_{j-1})+
(\alpha_{j}-\alpha_{j-1})\lambda(x_{j}/\alpha_{j})-(x_{j}-x_{j-1})
\nonumber
\\
&=-I(f).\nonumber
\end{align}
To prove the upper bound for compact sets, it is sufficient to prove that for any piecewise linear $f\in\mathcal{AC}_{0}[0,1]$,
\begin{equation}
\limsup_{\epsilon\rightarrow 0}\limsup_{t\rightarrow\infty}
\frac{1}{t}\log\mathbb{P}\left(\frac{N_{\cdot t}}{t}\in B_{\epsilon}(f)\right)\leq-I(f).
\end{equation}
To prove the upper bound for closed sets instead of compact sets, one needs to prove some superexponential estimates which will
be discussed later.

Notice that
\begin{equation}
1=\mathbb{E}\left[e^{\int_{0}^{1}\Phi(\alpha)dN_{\alpha T}
-\int_{0}^{1}(e^{\Phi(\alpha)}-1)\lambda(\frac{N_{\alpha T}}{1+\alpha T})d(\alpha T)}\right],
\end{equation}
for any bounded function $\Phi$. That is because for any $f(s,\omega)$ which is bounded, progressively measurable
and $\mathcal{F}_{t}$-predictable, 
\begin{equation}
\exp\left\{\int_{0}^{t}f(s,\omega)dN_{s}-\int_{0}^{t}(e^{f(s,\omega)}-1)\lambda_{s}ds\right\},
\end{equation}
is a martingale.

Let us choose the test functions $\Phi$ as a step function and assume that there exists a sequence
$0=\alpha_{0}<\alpha_{1}<\cdots<\alpha_{M}=1$ such that 
$\Phi(\alpha)=\beta_{j}$ for any $\alpha_{j-1}<\alpha<\alpha_{j}$, $1\leq j\leq M$.

For $\frac{N_{\cdot T}}{T}\in B_{\epsilon}(f)$, we have
\begin{align}
&\left|\int_{\alpha_{j-1}}^{\alpha_{j}}\Phi(\alpha)d\left(\frac{dN_{\alpha T}}{T}\right)
-\int_{\alpha_{j-1}}^{\alpha_{j}}\Phi(\alpha)f'(\alpha)d\alpha\right|
\\
&=\left|\beta_{j}\frac{N_{\alpha_{j}T}-N_{\alpha_{j-1}T}}{T}
-\beta_{j}(f(\alpha_{j})-f(\alpha_{j-1}))\right|
\leq 2|\beta_{j}|\epsilon.\nonumber
\end{align}
Moreover,
\begin{equation}
\left|\int_{0}^{1}(e^{\Phi}-1)\lambda\left(\frac{N_{\alpha T}}{1+\alpha T}\right)d\alpha
-\int_{0}^{1}(e^{\Phi}-1)\lambda\left(\frac{f(\alpha)}{\alpha}\right)d\alpha\right|
\leq\sup_{0\leq\alpha\leq 1}|e^{\Phi}-1|\gamma\cdot\epsilon.
\end{equation}
Hence, by Chebychev's inequality, we have
\begin{align}
&\limsup_{T\rightarrow\infty}
\frac{1}{T}\log\mathbb{P}\left(\frac{N_{\alpha T}}{T}\in B_{\epsilon}(f), 0\leq\alpha\leq 1\right)
\\
&\leq-\left\{\int_{0}^{1}\Phi(\alpha)f'(\alpha)d\alpha
-\int_{0}^{1}(e^{\Phi}-1)\lambda\left(\frac{f(\alpha)}{\alpha}\right)d\alpha\right\}
\nonumber
\\
&\qquad\qquad\qquad\qquad
+2\epsilon\sum_{j=1}^{M}|\beta_{j}|+\sup_{0\leq\alpha\leq 1}|e^{\Phi}-1|\gamma\cdot\epsilon.
\nonumber
\end{align}
Hence, we have
\begin{align}
&\limsup_{\epsilon\rightarrow 0}\limsup_{T\rightarrow\infty}
\frac{1}{T}\log\mathbb{P}\left(\frac{N_{\cdot T}}{T}\in B_{\epsilon}(f)\right)
\\
&\leq
-\left\{\int_{0}^{1}\Phi(\alpha)f'(\alpha)d\alpha
-\int_{0}^{1}(e^{\Phi}-1)\lambda\left(\frac{f(\alpha)}{\alpha}\right)d\alpha\right\}.
\nonumber
\end{align}
We can optimize over $\Phi$ by choosing $\Phi=\log(\frac{f'(\alpha)}{\lambda(f(\alpha)/\alpha)})$.
Hence, we proved that
\begin{equation}
\limsup_{\epsilon\rightarrow 0}\limsup_{T\rightarrow\infty}
\frac{1}{T}\log\mathbb{P}\left(\frac{N_{\cdot T}}{T}\in B_{\epsilon}(f)\right)\leq-I(f).
\end{equation}

Finally, we need to obtain the superexponential estimates in order to prove the upper bound for closed sets
instead of compact sets in the topology of uniform convergence. This is not difficult because the jump rate $\lambda(\cdot)\leq C_{0}$.
We have the following superexponential estimates,
\begin{align}
&\limsup_{K\rightarrow\infty}\limsup_{T\rightarrow\infty}\frac{1}{T}\log\mathbb{P}\left(\sup_{\alpha\in[0,1]}\frac{N_{\alpha T}}{T}\geq K\right)
\label{super1}
\\
&=\limsup_{K\rightarrow\infty}\limsup_{T\rightarrow\infty}\frac{1}{T}\log\mathbb{P}(N_{T}\geq KT)=-\infty,\nonumber
\end{align}
and for any $\delta>0$
\begin{align}
&\limsup_{\epsilon\rightarrow 0}\limsup_{T\rightarrow\infty}\frac{1}{T}\log\mathbb{P}
\left(\sup_{|\alpha-\beta|\leq\epsilon,0\leq\alpha,\beta\leq 1}\left|\frac{N_{\alpha T}}{T}-\frac{N_{\beta T}}{T}\right|\geq\delta\right)
\label{super2}
\\
&\leq\limsup_{\epsilon\rightarrow 0}\limsup_{T\rightarrow\infty}\frac{1}{T}\log\mathbb{P}
\left(\exists 1\leq j\leq\left[1/\epsilon\right]:N[j\epsilon T,(j+1)\epsilon T]\geq\frac{T\delta}{2}\right)\nonumber
\\
&\leq\limsup_{\epsilon\rightarrow 0}\limsup_{T\rightarrow\infty}\frac{1}{T}\log\left[1/\epsilon\right]\mathbb{P}
\left(N^{+}[0,\epsilon T]>\frac{T\delta}{2}\right),\nonumber
\end{align}
where $N^{+}$ is the Poisson process with constant rate $C_{0}$. Applying Chebychev's inequality and 
setting $\theta=\log\left(\frac{1+\epsilon}{\epsilon}\right)$, we have
\begin{align}
&\limsup_{\epsilon\rightarrow 0}\limsup_{T\rightarrow\infty}\frac{1}{T}\log\left[1/\epsilon\right]\mathbb{P}
\left(N^{+}[0,\epsilon T]>\frac{T\delta}{2}\right)
\\
&\leq\limsup_{\epsilon\rightarrow 0}\limsup_{T\rightarrow\infty}\frac{1}{T}\log
\left[1/\epsilon\right]e^{C_{0}(e^{\theta}-1)\epsilon T-\theta\delta T/2}\nonumber
\\
&\leq\limsup_{\epsilon\rightarrow 0}\left\{C_{0}-\log\left(\frac{1+\epsilon}{\epsilon}\right)\frac{\delta}{2}\right\}=-\infty.\nonumber
\end{align}
Hence, we have, for any closet set $C$,
\begin{equation}
\limsup_{T\rightarrow\infty}\frac{1}{T}\log\mathbb{P}\left(\frac{N_{\cdot T}}{T}\in C\right)\leq-\inf_{f\in C}I(f).
\end{equation}
\end{proof}

\begin{proof}[Proof of Proposition \ref{ZeroProp}]
We first note that $I(x)$ is a good rate function and therefore there must be a function $f\in\mathcal{AC}_0^+[0,1]$ with $f(0)=0$ and $f(1)=x$ 
such that
\begin{equation}
	I(x)= \int_0^1 L(\alpha, f(\alpha), f'(\alpha))d\alpha=0.
\end{equation}
We know that $L(\alpha, f(\alpha), f'(\alpha))\geq0$ and $L(\alpha, f(\alpha), f'(\alpha))=0$ if and only if 
\begin{equation}
	\label{eq:ODE of f}
	f'(\alpha)=\lambda\left(\frac{f(\alpha)}{\alpha}\right).
\end{equation}
As the limit $\alpha\rightarrow0^+$, we have
\begin{equation*}
	f'(0) = \lim_{\alpha\rightarrow 0^+}\lambda\left(\frac{f(\alpha)}{\alpha}\right) 
	= \lim_{\alpha\rightarrow 0^+}\lambda\left(\frac{f(\alpha)-f(0)}{\alpha}\right)
	= \lambda(f'(0)).
\end{equation*}
Thus, $f'(0)$ must be a fixed point $x^*$ of $\lambda(x)$, i.e., $x^*=\lambda(x^*)$. 
Then we discretize (\ref{eq:ODE of f}) by using the Euler method:
\begin{equation}
	\label{eq:Euler method}
	\begin{cases}
		f_1 = f_0 + \Delta\alpha \lambda(f'(0)), &f_0=0\\
		f_{n+1} = f_n + \Delta\alpha \lambda\left(\frac{f_n}{n\Delta \alpha}\right), &1\leq n < N=\frac{1}{\Delta \alpha}.
	\end{cases}
\end{equation}
By using the fact that $f'(0) = x^* = \lambda(x^*)$, it is easy to see that $f_n = x^* n\Delta \alpha$ for all $n$ and 
$f_N=x^*N\Delta \alpha =x^*$. When $\Delta\alpha\rightarrow 0$, $\{f_n\}_{n=0}^{N}$ obtained by (\ref{eq:Euler method}) converges to the solution 
of (\ref{eq:ODE of f}). Therefore, as $\Delta\alpha\to 0$, $x^* = f_N \rightarrow f(1) = x$ so $x=x^*$ which is a fixed point of 
$\lambda(x)$. In addition, $f(\alpha)$ must be linear: $f(\alpha) = \alpha x$, for $\alpha\in [0,1]$. 
\end{proof}

\subsection{Proofs of Results in Section \ref{TimeAsympSection}}

\begin{proof}[Proof of Proposition \ref{Expectation}]
(i) 
Let us recall that
\begin{align}
&dY_{t}=-\frac{Y_{t}}{t+1}dt+\frac{dN_{t}}{t+1},
\\
&d(Y_{t}^{2})
=\frac{-2Y_{t}^{2}}{t+1}dt+\left(\frac{2Y_{t-}}{t+1}+\frac{1}{(t+1)^{2}}\right)dN_{t}.
\nonumber
\end{align}

Let us assume that $\lambda(z)=\alpha z+\beta$, where $\alpha,\beta>0$ and $\alpha<1$
so that there is a unique fixed point to the equation $z=\lambda(z)$ at $z^{\ast}=\frac{\beta}{1-\alpha}$.

Let $m_{1}(t)=\mathbb{E}[Y_{t}]$ and assume $Y_{0}=0$, then,
\begin{equation}
\frac{dm_{1}(t)}{dt}=\frac{(\alpha-1)m_{1}(t)+\beta}{t+1},\qquad m_{1}(0)=0,
\end{equation}
which implies that
\begin{equation}
m_{1}(t)=\frac{\beta}{1-\alpha}\left[1-(t+1)^{\alpha-1}\right],
\end{equation}
which yields \eqref{ExpFormula}.

(ii)
Let $m_{2}(t)=\mathbb{E}[Y_{t}^{2}]$. Then,
\begin{equation}
\frac{dm_{2}(t)}{dt}=\frac{2(\alpha-1)m_{2}(t)}{t+1}+\frac{2\beta m_{1}(t)}{t+1}+\frac{\beta+\alpha m_{1}(t)}{(t+1)^{2}},
\qquad m_{2}(0)=0.
\end{equation}
Therefore,
\begin{align}
&\frac{dm_{2}(t)}{dt}=\frac{2(\alpha-1)m_{2}(t)}{t+1}+\frac{2\beta^{2}}{1-\alpha}\left[\frac{1}{t+1}-\frac{1}{(t+1)^{2-\alpha}}\right]
\\
&\qquad\qquad
+\frac{\frac{\beta}{1-\alpha}}{(t+1)^{2}}
-\frac{\beta\alpha}{1-\alpha}\frac{1}{(t+1)^{3-\alpha}},
\qquad m_{2}(0)=0.
\nonumber
\end{align}
Consider $m_{2}(t)=\frac{C_{1}}{t+1}+\frac{C_{2}}{(t+1)^{2-\alpha}}+C_{3}+\frac{C_{4}}{(t+1)^{1-\alpha}}+\frac{C_{5}}{(t+1)^{2(1-\alpha)}}$. Then,
\begin{align}
&-\frac{C_{1}}{(t+1)^{2}}-(2-\alpha)\frac{C_{2}}{(t+1)^{3-\alpha}}-(1-\alpha)\frac{C_{4}}{(t+1)^{2-\alpha}}+2(\alpha-1)\frac{C_{5}}{(t+1)^{3-2\alpha}}
\\
&=\frac{2(\alpha-1)C_{1}}{(t+1)^{2}}+\frac{2(\alpha-1)C_{2}}{(t+1)^{3-\alpha}}
+\frac{2(\alpha-1)C_{3}}{t+1}
\nonumber
\\
&\qquad\qquad
+\frac{2\beta^{2}}{1-\alpha}\frac{1}{t+1}
+\frac{\frac{\beta}{1-\alpha}}{(t+1)^{2}}
-\frac{\beta\alpha}{1-\alpha}\frac{1}{(t+1)^{3-\alpha}}
\nonumber
\\
&\qquad\qquad\qquad
+2(\alpha-1)\frac{C_{4}}{(t+1)^{2-\alpha}}-\frac{2\beta^{2}}{1-\alpha}\frac{1}{(t+1)^{2-\alpha}}
+2(\alpha-1)\frac{C_{5}}{(t+1)^{3-2\alpha}}.
\nonumber
\end{align}
Therefore, we have
\begin{align}
&C_{1}=\frac{\beta}{(1-2\alpha)(1-\alpha)},
\\
&C_{2}=\frac{\beta}{1-\alpha},
\nonumber
\\
&C_{3}=\frac{\beta^{2}}{(1-\alpha)^{2}},
\nonumber
\\
&C_{4}=\frac{-2\beta^{2}}{(1-\alpha)^{2}}.
\nonumber
\end{align}
Finally, since $m_{2}(0)=C_{1}+C_{2}+C_{3}+C_{4}+C_{5}=0$, we have
\begin{equation}
C_{5}=-C_{1}-C_{2}-C_{3}-C_{4}=-\frac{\beta}{1-\alpha}\left[1-\frac{\beta}{1-\alpha}+\frac{1}{1-2\alpha}\right].
\end{equation}
Therefore,
\begin{align}
\text{Var}[N_{t}]&=(t+1)^{2}\left[m_{2}(t)-m_{1}(t)^{2}\right]
\\
&=(t+1)^{2}
\bigg[\frac{\frac{\beta}{(1-2\alpha)(1-\alpha)}}{t+1}+\frac{\frac{\beta}{1-\alpha}}{(t+1)^{2-\alpha}}+\frac{\beta^{2}}{(1-\alpha)^{2}}
\nonumber
\\
&
\qquad\qquad
+\frac{\frac{-2\beta^{2}}{(1-\alpha)^{2}}}{(t+1)^{1-\alpha}}
+\frac{-\frac{\beta}{1-\alpha}\left[1-\frac{\beta}{1-\alpha}+\frac{1}{1-2\alpha}\right]}{(t+1)^{2(1-\alpha)}}
\nonumber
\\
&\qquad\qquad
-\frac{\beta^{2}}{(1-\alpha)^{2}}-\frac{\beta^{2}}{(1-\alpha)^{2}}\frac{1}{(t+1)^{2(1-\alpha)}}
+2\frac{\beta^{2}}{(1-\alpha)^{2}}\frac{1}{(t+1)^{1-\alpha}}\bigg]
\nonumber
\\
&=(t+1)^{2}
\bigg[\frac{\frac{\beta}{(1-2\alpha)(1-\alpha)}}{t+1}+\frac{\frac{\beta}{1-\alpha}}{(t+1)^{2-\alpha}}
+\frac{-\frac{2\beta}{1-2\alpha}}{(t+1)^{2(1-\alpha)}}\bigg].
\nonumber
\end{align}
Hence, we proved \eqref{VarFormula}.

For $0<\alpha<\frac{1}{2}$, from \eqref{VarFormula}, it is easy to check that
\begin{equation}
\lim_{t\rightarrow\infty}\frac{\text{Var}[N_{t}]}{t}=\frac{\beta}{(1-2\alpha)(1-\alpha)}.
\end{equation}

For $\alpha>\frac{1}{2}$, from \eqref{VarFormula}, it is easy to check that
\begin{equation}
\lim_{t\rightarrow\infty}\frac{\text{Var}[N_{t}]}{t^{2\alpha}}=\frac{2\beta}{2\alpha-1}.
\end{equation}
\end{proof}

\begin{proof}[Proof of Theorem \ref{CLTThm}]
Cox and Grimmett \cite{CoxGrimmett} has a central limit theorem for associated random variables.
For a sequence of associated random variables $(X_{n})_{n=1}^{\infty}$, if it satisfies

(i) $\text{Var}[X_{n}]\geq c_{1}$ and $\mathbb{E}[|X_{n}|^{3}]\leq c_{2}$.

(ii) $\sum_{j:|n-j|\geq r}\text{Cov}(X_{j},X_{n})\leq u(r)\rightarrow 0$ as $r\rightarrow\infty$. 

Then, $\frac{S_{n}-\mathbb{E}[S_{n}]}{\sqrt{\text{Var}[S_{n}]}}\rightarrow N(0,1)$ in distribution
as $n\rightarrow\infty$. 

For self-exciting point processes, a new jump will increase the intensity which will help generate more jumps. 
Under the assumption $\lambda(\cdot)$ is increasing, our model is in the class of self-exciting point processes
studied by Kwieci\'{n}ski and Szekli \cite{Kwiecinski} and $(N(n,n+1])_{n=0}^{\infty}$ are associated random variables.

One can use the formulas in Proposition \ref{Expectation} to show (i) directly. Alternatively, we can observe that
for any $T>0$, the compensator of $N[t,t+T]$ is $\int_{t}^{t+T}\lambda_{s}ds$ which converges to $\frac{\beta}{1-\alpha}T$ as $t\rightarrow\infty$. Hence $N[t,t+T]$, $T>0$ converges to a standard 
Poisson process $\bar{N}$ with parameter $\frac{\beta}{1-\alpha}$
as $t\rightarrow\infty$. Thus, $\text{Var}[N[t,t+1]]\rightarrow\text{Var}[\bar{N}[0,1]]$,
$\mathbb{E}[N[t,t+1]^{3}]\rightarrow\mathbb{E}[\bar{N}[0,1]^{3}]$ as $t\rightarrow\infty$.

Note that by Proposition \ref{CovProp}, for any $t>s+1$,
\begin{align}
&\text{Cov}(N[t,t+1],N[s,s+1])
\\
&=\text{Cov}(N_{t+1},N_{s+1})
-\text{Cov}(N_{t+1},N_{s})-\text{Cov}(N_{t},N_{s+1})+\text{Cov}(N_{t},N_{s})
\nonumber
\\
&=(t+2)^{\alpha}\left[\frac{\beta}{(1-2\alpha)(1-\alpha)}(s+2)^{1-\alpha}+\frac{\beta}{1-\alpha}
-\frac{2\beta}{1-2\alpha}(s+2)^{\alpha}\right]
\nonumber
\\
&\qquad
-(t+2)^{\alpha}\left[\frac{\beta}{(1-2\alpha)(1-\alpha)}(s+1)^{1-\alpha}+\frac{\beta}{1-\alpha}
-\frac{2\beta}{1-2\alpha}(s+1)^{\alpha}\right]
\nonumber
\\
&\qquad
-(t+1)^{\alpha}\left[\frac{\beta}{(1-2\alpha)(1-\alpha)}(s+2)^{1-\alpha}+\frac{\beta}{1-\alpha}
-\frac{2\beta}{1-2\alpha}(s+2)^{\alpha}\right]
\nonumber
\\
&\qquad
+(t+1)^{\alpha}\left[\frac{\beta}{(1-2\alpha)(1-\alpha)}(s+1)^{1-\alpha}+\frac{\beta}{1-\alpha}
-\frac{2\beta}{1-2\alpha}(s+1)^{\alpha}\right]
\nonumber
\\
&=\frac{\beta}{(1-2\alpha)(1-\alpha)}[(t+2)^{\alpha}-(t+1)^{\alpha}][(s+2)^{1-\alpha}-(s+1)^{1-\alpha}]
\nonumber
\\
&\qquad
-\frac{2\beta}{1-2\alpha}[(t+2)^{\alpha}-(t+1)^{\alpha}][(s+2)^{\alpha}-(s+1)^{\alpha}]
\nonumber
\\
&\leq\frac{\beta}{(1-2\alpha)(1-\alpha)}[(t+2)^{\alpha}-(t+1)^{\alpha}][(s+2)^{1-\alpha}-(s+1)^{1-\alpha}]
\nonumber
\\
&\leq\frac{\beta}{(1-2\alpha)(1-\alpha)}\frac{\alpha}{(t+1)^{1-\alpha}}\frac{1-\alpha}{(s+1)^{\alpha}}
\nonumber
\\
&\leq\frac{\alpha\beta}{1-2\alpha}\frac{1}{(t+1)^{1-\alpha}}.
\nonumber
\end{align}
This proved (ii). Since $\mathbb{E}[N[t,t+1]]$ is uniformly bounded in $t$, discrete time CLT can 
be replaced by continuous time CLT and we have $\frac{N_{t}-\mathbb{E}[N_{t}]}{\sqrt{\text{Var}[N_{t}]}}\rightarrow N(0,1)$
in distribution as $t\rightarrow\infty$. Finally, by the expressions of $\mathbb{E}[N_{t}]$ and $\text{Var}[N_{t}]$ in
Proposition \ref{Expectation}, we proved \eqref{CLTEqn}.
\end{proof}

\begin{proof}[Proof of Proposition \ref{OneHalf}]
Following the proof of Proposition \ref{Expectation}, for $\alpha=\frac{1}{2}$, $m_{2}(t)=\mathbb{E}[Y_{t}^{2}]$,
\begin{align}
&\frac{dm_{2}(t)}{dt}=\frac{2(\alpha-1)m_{2}(t)}{t+1}+\frac{2\beta^{2}}{1-\alpha}\left[\frac{1}{t+1}-\frac{1}{(t+1)^{2-\alpha}}\right]
\\
&\qquad\qquad
+\frac{\frac{\beta}{1-\alpha}}{(t+1)^{2}}
-\frac{\beta\alpha}{1-\alpha}\frac{1}{(t+1)^{3-\alpha}},
\qquad m_{2}(0)=0.
\nonumber
\end{align}
Consider $m_{2}(t)=\frac{C_{1}}{t+1}+\frac{C_{2}}{(t+1)^{2-\alpha}}+C_{3}+\frac{C_{4}}{(t+1)^{1-\alpha}}+\frac{C_{5}\log(t+1)}{t+1}$
and use the initial condition $m_{2}(0)=0$, we get
\begin{align}
&C_{1}=4\beta^{2}-2\beta,
\\
&C_{2}=\frac{\beta}{1-\alpha}=2\beta,
\nonumber
\\
&C_{3}=\frac{\beta^{2}}{(1-\alpha)^{2}}=4\beta^{2},
\nonumber
\\
&C_{4}=\frac{-2\beta^{2}}{(1-\alpha)^{2}}=-8\beta^{2},
\nonumber
\\
&C_{5}=2\beta.
\nonumber
\end{align}
Therefore,
\begin{align}
\text{Var}[N_{t}]&=(t+1)^{2}[m_{2}(t)-(m_{1}(t))^{2}]
\\
&=(t+1)^{2}\bigg[\frac{4\beta^{2}-2\beta+2\beta\log(t+1)}{t+1}+\frac{2\beta}{(t+1)^{3/2}}+4\beta^{2}
+\frac{-8\beta^{2}}{(t+1)^{1/2}}
\nonumber
\\
&\qquad\qquad
-4\beta^{2}-\frac{4\beta^{2}}{t+1}+\frac{8\beta^{2}}{(t+1)^{1/2}}
\bigg]
\nonumber
\\
&=(t+1)^{2}\left[\frac{2\beta[\log(t+1)-1]}{t+1}+\frac{2\beta}{(t+1)^{3/2}}\right].
\nonumber
\end{align}
\end{proof}

\begin{proof}[Proof of Proposition \ref{One}]
Let $m_{1}(t)=\mathbb{E}[Y_{t}]$ and assume $Y_{0}=0$, then,
\begin{equation}
\frac{dm_{1}(t)}{dt}=\frac{\beta}{t+1},\qquad m_{1}(0)=0,
\end{equation}
which yields that $m_{1}(t)=\beta\log(t+1)$.

Let $m_{2}(t)=\mathbb{E}[Y_{t}^{2}]$. Then, $m_{2}(0)=0$,
\begin{align}
\frac{dm_{2}(t)}{dt}&=\frac{2\beta m_{1}(t)}{t+1}+\frac{\beta+m_{1}(t)}{(t+1)^{2}}
\\
&=\frac{2\beta^{2}\log(t+1)}{t+1}+\frac{\beta+\beta\log(t+1)}{(t+1)^{2}},
\nonumber
\end{align}
which implies that
\begin{equation}
m_{2}(t)=\beta^{2}[\log(t+1)]^{2}-\frac{\beta\log(t+1)}{t+1}+2\beta\left[1-\frac{1}{t+1}\right].
\end{equation}
Therefore,
\begin{align}
\text{Var}[N_{t}]&=(t+1)^{2}[m_{2}(t)-(m_{1}(t))^{2}]
\\
&=(t+1)^{2}\left[-\frac{\beta\log(t+1)}{t+1}+2\beta\left[1-\frac{1}{t+1}\right]\right]
\nonumber
\\
&=-\beta(t+1)\log(t+1)+2\beta t(t+1).
\nonumber
\end{align}
\end{proof}

\begin{proof}[Proof of Corollary \ref{LLNAnother}]
The proof follows from the results in Proposition \ref{Expectation}, Proposition \ref{OneHalf}, Proposition \ref{One}
and Chebychev's inequality.
\end{proof}

\begin{proof}[Proof of Proposition \ref{CovProp}]
For any $t>s$,
\begin{equation}
\mathbb{E}[N_{t}N_{s}]=\mathbb{E}[N_{s}^{2}]+\mathbb{E}\left[N_{s}\int_{s}^{t}\lambda_{u}du\right]
=\mathbb{E}[N_{s}^{2}]+\int_{s}^{t}\left(\beta+\alpha\frac{\mathbb{E}[N_{u}N_{s}]}{u+1}\right)du.
\end{equation}
Let $m(t,s):=\mathbb{E}[N_{t}N_{s}]$. Then
\begin{equation}
\frac{\partial m}{\partial t}=\beta\mathbb{E}[N_{s}]+\frac{\alpha}{t+1}m(t,s),
\qquad m(s,s)=\mathbb{E}[N_{s}^{2}],
\end{equation}
which yields the solution when $\alpha\neq 1$,
\begin{equation}\label{TS}
m(t,s)=(t+1)\frac{\beta\mathbb{E}[N_{s}]}{1-\alpha}
+(t+1)^{\alpha}\frac{\mathbb{E}[N_{s}^{2}]-\frac{\beta\mathbb{E}[N_{s}]}{1-\alpha}(s+1)}{(s+1)^{\alpha}}.
\end{equation}
>From the proofs of Proposition \ref{Expectation}, for $\alpha\notin\{\frac{1}{2},1\}$,
\begin{align}
&\mathbb{E}[N_{t}]=\frac{\beta}{1-\alpha}[(t+1)-(t+1)^{\alpha}],
\\
&\mathbb{E}[N_{t}^{2}]=\frac{\beta}{(1-2\alpha)(1-\alpha)}(t+1)+\frac{\beta}{1-\alpha}(t+1)^{\alpha}
+\frac{\beta^{2}}{(1-\alpha)^{2}}(t+1)^{2}
\nonumber
\\
&\qquad\qquad\qquad
-\frac{2\beta^{2}}{(1-\alpha)^{2}}(t+1)^{\alpha+1}
-\frac{\beta}{1-\alpha}\left[1-\frac{\beta}{1-\alpha}+\frac{1}{1-2\alpha}\right](t+1)^{2\alpha}.
\nonumber
\end{align}
Substituting them into \eqref{TS} and using the idensity 
$\text{Cov}[N_{t},N_{s}]=\mathbb{E}[N_{t}N_{s}]-\mathbb{E}[N_{t}]\mathbb{E}[N_{s}]$, we get
\begin{align}
&\text{Cov}[N_{t},N_{s}]
\\
&=\frac{\beta^{2}}{(1-\alpha)^{2}}(t+1)[(s+1)-(s+1)^{\alpha}]
-\frac{\beta^{2}}{(1-\alpha)^{2}}[(s+1)^{2-\alpha}-(s+1)](t+1)^{\alpha}
\nonumber
\\
&\qquad
+(t+1)^{\alpha}\bigg[\frac{\beta}{(1-2\alpha)(1-\alpha)}(s+1)^{1-\alpha}+\frac{\beta}{1-\alpha}
+\frac{\beta^{2}}{(1-\alpha)^{2}}(s+1)^{2-\alpha}
\nonumber
\\
&\qquad\qquad\qquad
-\frac{2\beta^{2}}{(1-\alpha)^{2}}(s+1)
-\frac{\beta}{1-\alpha}\left[1-\frac{\beta}{1-\alpha}+\frac{1}{1-2\alpha}\right](s+1)^{\alpha}\bigg]
\nonumber
\\
&\qquad\qquad
-\frac{\beta^{2}}{(1-\alpha)^{2}}[(t+1)-(t+1)^{\alpha}][(s+1)-(s+1)^{\alpha}]
\nonumber
\\
&=
(t+1)^{\alpha}\left[\frac{\beta}{(1-2\alpha)(1-\alpha)}(s+1)^{1-\alpha}+\frac{\beta}{1-\alpha}
-\frac{2\beta}{1-2\alpha}(s+1)^{\alpha}\right].
\nonumber
\end{align}
For $\alpha=\frac{1}{2}$, from Proposition \ref{OneHalf},
\begin{align*}
&\mathbb{E}[N_{t}]=2\beta[(t+1)-(t+1)^{\frac{1}{2}}],
\\
&\mathbb{E}[N_{t}^{2}]=(4\beta^{2}-2\beta)(t+1)+2\beta(t+1)^{\frac{1}{2}}+4\beta^{2}(t+1)^{2}
\\
&\qquad\qquad\qquad
-8\beta^{2}(t+1)^{\frac{3}{2}}+2\beta(t+1)\log(t+1).
\end{align*}
Hence, substituting these into \eqref{TS}, we get
\begin{align}
\text{Cov}[N_{t},N_{s}]
&=m(t,s)-\mathbb{E}[N_{t}]\mathbb{E}[N_{s}]
\\
&=(t+1)2\beta\mathbb{E}[N_{s}]
+(t+1)^{\frac{1}{2}}\frac{\mathbb{E}[N_{s}^{2}]-2\beta\mathbb{E}[N_{s}](s+1)}{(s+1)^{\frac{1}{2}}}
\nonumber
\\
&\qquad\qquad
-4\beta^{2}[(t+1)-(t+1)^{\frac{1}{2}}][(s+1)-(s+1)^{\frac{1}{2}}]
\nonumber
\\
&=2\beta[-(t+1)^{\frac{1}{2}}(s+1)^{\frac{1}{2}}+(t+1)^{\frac{1}{2}}+(t+1)^{\frac{1}{2}}(s+1)^{\frac{1}{2}}\log(s+1)].
\nonumber
\end{align}
\end{proof}

\begin{proof}[Proof of Proposition \ref{CovOneProp}]
Let $m(t,s)=\mathbb{E}[N_{t}N_{s}]$. Following the proofs in Proposition \ref{CovProp},
\begin{equation}
\frac{\partial m}{\partial t}=\beta\mathbb{E}[N_{s}]+\frac{1}{t+1}m(t,s),
\qquad m(s,s)=\mathbb{E}[N_{s}^{2}],
\end{equation}
which yields the solution
\begin{equation}\label{TSII}
m(t,s)=\beta\mathbb{E}[N_{s}](t+1)\log(t+1)
+(t+1)\frac{\mathbb{E}[N_{s}^{2}]-\beta\mathbb{E}[N_{s}](s+1)\log(s+1)}{s+1}.
\end{equation}
>From Proposition \ref{One},
\begin{align}
&\mathbb{E}[N_{t}]=\beta(t+1)\log(t+1),
\\
&\mathbb{E}[N_{t}^{2}]=-\beta(t+1)\log(t+1)+2\beta t(t+1)+\beta^{2}[(t+1)\log(t+1)]^{2}.
\nonumber
\end{align}
Substituting this into \eqref{TSII} and using the idensity 
$\text{Cov}[N_{t},N_{s}]=\mathbb{E}[N_{t}N_{s}]-\mathbb{E}[N_{t}]\mathbb{E}[N_{s}]$, we get
\begin{align}
&\text{Cov}[N_{t},N_{s}]
\\
&=\beta\mathbb{E}[N_{s}](t+1)\log(t+1)
+(t+1)\frac{\mathbb{E}[N_{s}^{2}]-\beta\mathbb{E}[N_{s}](s+1)\log(s+1)}{s+1}
\nonumber
\\
&\qquad\qquad
-\beta^{2}[(t+1)\log(t+1)][(s+1)\log(s+1)]
\nonumber
\\
&=\frac{t+1}{s+1}\left[-\beta(s+1)\log(s+1)+2\beta s(s+1)+\beta^{2}[(s+1)\log(s+1)]^{2}\right]
\nonumber
\\
&\qquad\qquad
-\beta(t+1)\log(s+1)\beta(s+1)\log(s+1)
\nonumber
\\
&=-\beta(t+1)\log(s+1)+2\beta s(t+1).
\nonumber
\end{align}
\end{proof}

\begin{proof}[Proof of Proposition \ref{GammaLimit}]
Since
\begin{equation}
\mathbb{E}[N_{t}]=\mathbb{E}\left[\int_{0}^{t}\lambda_{s}ds\right]
=\int_{0}^{t}\frac{\alpha(\mathbb{E}[N_{s}]+\gamma)}{s+1}ds.
\end{equation}
By letting $g(t):=\mathbb{E}[N_{t}]$, it satisfies the ODE
\begin{equation}
g'(t)=\frac{\alpha(g(t)+\gamma)}{t+1},\qquad g(0)=0,
\end{equation}
which yields the solution $g(t)=\gamma[(t+1)^{\alpha}-1]$.
This is consistent with \eqref{ExampleFormula} and the variance of a negative binomial distribution.
Next, let $h(t)=\mathbb{E}[N_{t}^{2}]$. Then
\begin{equation}
d(N_{t}^{2})=(2N_{t-}+1)dN_{t},
\end{equation}
and hence after taking expectations, $h(0)=0$ and
\begin{align}
h'(t)&=2\alpha\frac{h(t)}{t+1}+\frac{2\gamma\alpha+\alpha}{t+1}g(t)+\frac{\gamma\alpha}{t+1}
\\
&=\frac{2\alpha h(t)}{t+1}+\frac{\frac{2\gamma^{2}\alpha^{2}}{\alpha}+\gamma\alpha}{(t+1)^{1-\alpha}}-\frac{\frac{2\gamma^{2}\alpha^{2}}{\alpha}}{t+1},
\nonumber
\end{align}
which yields the solution
\begin{equation}
h(t)=\frac{\gamma^{2}\alpha^{2}}{\alpha^{2}}
-\left[\frac{2\gamma^{2}\alpha^{2}}{\alpha^{2}}+\frac{\gamma\alpha}{\alpha}\right](t+1)^{\alpha}
+\left[\frac{\gamma^{2}\alpha^{2}}{\alpha^{2}}+\frac{\gamma\alpha}{\alpha}\right](t+1)^{2\alpha}.
\end{equation}
Hence
\begin{equation}
\text{Var}[N_{t}]=\gamma[(t+1)^{2\alpha}-(t+1)^{\alpha}].
\end{equation}
This is consistent with \eqref{ExampleFormula} and the variance of a negative binomial distribution.
Furthermore, by \eqref{ConditionalFormula} and the properties of negative binomial distributions
\begin{align}
&\text{Cov}[N_{t},N_{s}]
\\
&=\mathbb{E}[\mathbb{E}[N_{t}|N_{s}]N_{s}]-\mathbb{E}[N_{t}]\mathbb{E}[N_{s}]
\nonumber
\\
&=\mathbb{E}\left[(N_{s}+\gamma)\left[\left(\frac{t+1}{s+1}\right)^{\alpha}-1\right]N_{s}+N_{s}^{2}\right]
-\mathbb{E}[N_{t}]\mathbb{E}[N_{s}]
\nonumber
\\
&=\left[\left(\frac{t+1}{s+1}\right)^{\alpha}-1\right]\left[\gamma[(s+1)^{2\alpha}-(s+1)^{\alpha}]
+\gamma^{2}[(s+1)^{\alpha}-1]^{2}+\gamma[(s+1)^{\alpha}-1]\right]
\nonumber
\\
&\qquad
+\gamma[(s+1)^{2\alpha}-(s+1)^{\alpha}]+\gamma^{2}[(s+1)^{\alpha}-1]^{2}
-\gamma^{2}[(t+1)^{\alpha}-1][(s+1)^{\alpha}-1]
\nonumber
\\
&=[(s+1)^{\alpha}-1]\left[\gamma(t+1)^{\alpha}+(\gamma-\gamma^{2})\left[\frac{(t+1)^{\alpha}}{(s+1)^{\alpha}}-1\right]\right].
\nonumber
\end{align}
Moreover,
\begin{equation}
d\left(\frac{N_{t}+\gamma}{(t+1)^{\alpha}}\right)
=-\alpha\frac{N_{t}+\gamma}{(t+1)^{\alpha+1}}dt+\frac{dN_{t}}{(t+1)^{\alpha}}
=\frac{dM_{t}}{(t+1)^{\alpha}}.
\end{equation}
Hence, $\frac{N_{t}+\gamma}{(t+1)^{\alpha}}$ is a martingale and from \eqref{alphaVar} 
we have that $\sup_{t>0}\mathbb{E}\left[\frac{N_{t}+\gamma}{(t+1)^{\alpha}}\right]<\infty$. Therefore,
by martingale convergence theorem, 
$\frac{N_{t}}{t^{\alpha}}\rightarrow\chi(\alpha,\gamma)$,
a.s. and in $L^{2}(\mathbb{P})$, for some random variable 
$\chi(\alpha,\gamma)$ which is finite a.s. and in $L^{2}(\mathbb{P})$ and possibly depends on 
parameters $\alpha$ and $\gamma$.
Finally, by \eqref{ExampleFormula} and the formula for the Laplace transform 
of negative binomial distribution, for any $\theta>0$, 
\begin{equation}
\mathbb{E}\left[e^{-\theta\frac{N_{t}}{t^{\alpha}}}\right]
=\left(\frac{\frac{1}{(t+1)^{\alpha}}}{1-\left(1-\frac{1}{(t+1)^{\alpha}}\right)
e^{-\frac{\theta}{t^{\alpha}}}}\right)^{\gamma}
\rightarrow\left(\frac{1}{1+\theta}\right)^{\gamma},
\end{equation}
as $t\rightarrow\infty$. Hence $\chi(\alpha,\gamma)$ is independent of $\alpha$
and follows a gamma distribution with shape $\gamma$ and scale $1$. 
\end{proof}

\begin{proof}[Proof of Proposition \ref{ExplosionProp}]
For any $T>0$, $\lambda_{t}\geq\lambda(\frac{N_{t-}+\gamma}{T+1})$ on $[0,T]$. 
Comparing with the pure-birth process, see e.g. Feller \cite{Feller}, it becomes clear
that $\mathbb{P}(N(0,T]=\infty)>0$. Moreover, to see $\mathbb{P}(\tau<\infty)<1$, it suffices
to notice that
\begin{equation}
\mathbb{P}(\tau=\infty)\geq\mathbb{P}(N(0,\infty)=\infty)
=e^{-\int_{0}^{\infty}\lambda(\frac{\gamma}{s+1})ds}\in(0,\infty).
\end{equation}
\end{proof}

\subsection{Proof of Results in Section \ref{LargeInitialSection}}

\begin{proof}[Proof of Proposition \ref{LargeInitial}]
Let us assume first that $\lambda(z)=\alpha z$. Then, we know that
\begin{equation}
\frac{N_{t}+\gamma}{(t+1)^{\alpha}}-\gamma=\int_{0}^{t}\frac{dM_{s}}{(s+1)^{\alpha}}
\end{equation}
is a martingale. Therefore, for any $\epsilon>0$, using $\mathbb{E}[N_{t}]=\gamma[(t+1)^{\alpha}-1]$ from Proposition \ref{GammaLimit}
\begin{align}
\mathbb{P}\left(\sup_{0\leq s\leq t}\left|\frac{N_{s}+\gamma}{(s+1)^{\alpha}}-\gamma\right|\geq\epsilon\gamma\right)
&\leq\frac{\mathbb{E}\left[\left(\int_{0}^{t}\frac{dM_{s}}{(s+1)^{\alpha}}\right)^{2}\right]}{\epsilon^{2}\gamma^{2}}
\\
&=\frac{\int_{0}^{t}\frac{\mathbb{E}[\lambda_{s}]}{(s+1)^{\alpha}}ds}{\epsilon^{2}\gamma^{2}}
\nonumber
\\
&=\frac{\int_{0}^{t}\frac{\alpha\gamma}{s+1}ds}{\epsilon^{2}\gamma^{2}}
\nonumber
\\
&\rightarrow 0,\nonumber
\end{align}
as $\gamma\rightarrow\infty$. Hence,
\begin{align}
\mathbb{P}\left(\sup_{0\leq s\leq t}\left|\frac{N_{s}}{\gamma}-[(s+1)^{\alpha}-1]\right|\geq\epsilon\right)
&=\mathbb{P}\left(\sup_{0\leq s\leq t}\left|\frac{N_{s}+\gamma}{(s+1)^{\alpha}}-\gamma\right|(s+1)^{\alpha}\geq\epsilon\gamma\right)
\\
&
\leq\mathbb{P}\left(\sup_{0\leq s\leq t}\left|\frac{N_{s}+\gamma}{(s+1)^{\alpha}}-\gamma\right|\geq\frac{\epsilon\gamma}{(t+1)^{\alpha}}\right)
\nonumber
\\
&\rightarrow\infty,
\nonumber
\end{align}
as $\gamma\rightarrow\infty$. If $\lim_{z\rightarrow\infty}\frac{\lambda(z)}{z}=\alpha$, then for any $\delta>0$,
there exists $K$ so that for any $z\geq K$, $(\alpha-\delta)z\leq\lambda(z)\leq(\alpha+\delta)z$. 
Uniformly for $0\leq s\leq t$, $\frac{N_{s-}+\gamma}{s+1}\geq\frac{\gamma}{t+1}\geq K$ for any $\gamma\geq K(t+1)$.
Now using the results for $\lambda(z)=(\alpha+\delta)z$ and $\lambda(z)=(\alpha-\delta)z$ and let $\delta\rightarrow 0$, we proved Proposition \ref{LargeInitial}.
\end{proof}

\begin{proof}[Proof of Proposition \ref{SublinearInitial}]
For any $\epsilon>0$ and fixed $t>0$, for suffciently large $\gamma$,
$(\alpha-\epsilon)z^{\beta}\leq\lambda(z)\leq(\alpha+\epsilon)z^{\beta}$ for any $z\geq\frac{\gamma}{t+1}$. 
Since it holds for any $\epsilon>0$, to prove Proposition \ref{SublinearInitial}, it suffices
to consider the case $\lambda(z)=\alpha z^{\beta}$. Without loss of generality, let us take $\alpha=1$.
Let us use the Poisson embedding. 
Let $N^{(0)}$ be the Poisson process with intensity $\lambda(\frac{\gamma}{t+1})=\frac{\gamma^{\beta}}{(t+1)^{\beta}}$
and the compensator
\begin{equation}
\int_{0}^{t}\lambda\left(\frac{\gamma}{s+1}\right)ds
=\int_{0}^{t}\frac{\gamma^{\beta}}{(s+1)^{\beta}}ds=\frac{\gamma^{\beta}}{1-\beta}[(t+1)^{1-\beta}-1].
\end{equation}
It is easy to show that
\begin{equation}
\sup_{0\leq s\leq t}\left|\frac{N_{s}^{(0)}}{\gamma^{\beta}}-\frac{1}{1-\beta}[(s+1)^{1-\beta}-1]\right|\rightarrow 0,
\end{equation}
in probability as $\gamma\rightarrow\infty$.

Conditional on $N^{(0)}$, let $N^{(1)}$ be the inhomogeneous Poisson process with intensity
\begin{equation}
\lambda\left(\frac{N^{(0)}_{t-}+\gamma}{t+1}\right)-\lambda\left(\frac{\gamma}{t+1}\right),
\end{equation}
at time $t$. Inductively, conditional on $N^{(0)},N^{(1)},\ldots,N^{(k)}$, $N^{(k+1)}$ is an inhomogeneous Poisson process
with intensity
\begin{equation}
\lambda\left(\frac{N^{(0)}_{t-}+N^{(1)}_{t-}+\cdots N^{(k)}_{t-}+\gamma}{t+1}\right)
-\lambda\left(\frac{N^{(0)}_{t-}+N^{(1)}_{t-}+\cdots N^{(k-1)}_{t-}+\gamma}{t+1}\right),
\end{equation}
at time $t$. By the mean value theorem, and the assumption $0<\beta<1$,
\begin{align}
&\lambda\left(\frac{N^{(0)}_{s-}+N^{(1)}_{s-}+\cdots N^{(k)}_{s-}+\gamma}{s+1}\right)
-\lambda\left(\frac{N^{(0)}_{s-}+N^{(1)}_{s-}+\cdots N^{(k-1)}_{s-}+\gamma}{s+1}\right)
\\
&\leq\beta\left(\frac{\gamma}{s+1}\right)^{\beta-1}N^{(0)}_{s}
\nonumber
\\
&\leq\beta\left(\frac{\gamma}{t+1}\right)^{\beta-1}N^{(0)}_{t}.
\nonumber
\end{align}
Therefore, by induction,
\begin{align}
\mathbb{E}[N_{t}^{(k+1)}]
&\leq\beta t\left(\frac{\gamma}{t+1}\right)^{\beta-1}\mathbb{E}[N^{(k)}_{t}]
\\
&\leq\left(\beta t\left(\frac{\gamma}{t+1}\right)^{\beta-1}\right)^{k+1}\mathbb{E}[N^{(0)}_{t}].
\nonumber
\end{align}
For fixed $t$, for sufficiently large $\gamma$, we have $t\left(\frac{\gamma}{t+1}\right)^{\beta-1}\leq 1$.
Hence, $\mathbb{E}[N_{t}^{(k+1)}]\leq\beta^{k+1}\mathbb{E}[N^{(0)}_{t}]$ 
and $\sum_{k=0}^{\infty}\mathbb{E}[N^{(k)}_{t}]\leq\frac{1}{1-\beta}\mathbb{E}[N^{(0)}_{t}]<\infty$
$N_{t}=\sum_{k=0}^{\infty}N_{t}^{(k)}$ is well defined and coincides with the self-exciting point process in our model from Poisson embedding.
Moreover,
\begin{align}
\mathbb{E}\left[\sum_{k=1}^{\infty}N_{t}^{(k)}\right]
&\leq\sum_{k=1}^{\infty}\left(\beta\left(\frac{\gamma}{t+1}\right)^{\beta-1}\right)^{k}\mathbb{E}[N^{(0)}_{t}]
\\
&\leq\beta\left(\frac{\gamma}{t+1}\right)^{\beta-1}\frac{\gamma^{\beta}}{1-\beta}[(t+1)^{1-\beta}-1].
\nonumber
\end{align}
Therefore, we conclude that $\frac{\sum_{k=1}^{\infty}N_{t}^{(k)}}{\gamma^{\beta}}\rightarrow 0$ in probability as $\gamma\rightarrow\infty$.
Hence, we proved the desired result.
\end{proof}

\subsection{Proofs of Results in Section \ref{TailSection}}

\begin{proof}[Proof of Theorem \ref{MarginalProp}]
(i) The identity \eqref{MasterFormula} holds from the definition of our model.
The integrand is the infinitesimal probability that there are precisely $k$ jumps on the time
interval $[0,t]$ that occurs at $0<t_{1}<t_{2}<\cdots<t_{k}$.

(ii) This is a direct consequence of (i).

(iii) When $\lambda(z)=\alpha z$, 
\begin{align}
\mathbb{P}(N_{t}=k)
&=\idotsint_{0<t_{1}<t_{2}<\cdots<t_{k}<t}
\alpha^{k}\prod_{j=1}^{k}\frac{\gamma+j-1}{t_{j}+1}
\\
&\qquad\qquad
\cdot
e^{-\int_{0}^{t_{1}}\frac{\alpha\gamma}{s+1}ds
-\int_{t_{1}}^{t_{2}}\frac{\alpha(\gamma+1)}{s+1}ds
-\cdots-\int_{t_{k}}^{t}\frac{\alpha(\gamma+k)}{s+1}ds}
dt_{1}dt_{2}\cdots dt_{k}
\nonumber
\\
&=\alpha^{k}\gamma(\gamma+1)\cdots(\gamma+k-1)
\idotsint_{0<t_{1}<t_{2}<\cdots<t_{k}<t}
\prod_{j=1}^{k}\frac{1}{t_{j}+1}
\nonumber
\\
&\quad
\exp\bigg\{-\alpha\gamma\log(t_{1}+1)+\alpha(\gamma+1)\log(t_{1}+1)-\alpha(\gamma+1)\log(t_{2}+1)
\nonumber
\\
&\quad\quad
\cdots
+\alpha(\gamma+k)\log(t_{k}+1)-\alpha(\gamma+k)\log(t+1)\bigg\}dt_{1}\cdots dt_{k}
\nonumber
\\
&=\alpha^{k}\gamma(\gamma+1)\cdots(\gamma+k-1)\frac{1}{(t+1)^{\alpha(\gamma+k)}}
\nonumber
\\
&\qquad\qquad
\idotsint_{0<t_{1}<t_{2}<\cdots<t_{k}<t}
\prod_{j=1}^{k}\frac{1}{(t_{j}+1)^{1-\alpha}}dt_{1}\cdots dt_{k}
\nonumber
\\
&=\frac{1}{k!}\frac{\gamma(\gamma+1)\cdots(\gamma+k-1)}{(t+1)^{\alpha(\gamma+k)}}
[(t+1)^{\alpha}-1]^{k}
\nonumber
\\
&=\binom{k+\gamma-1}{k}\left(1-\frac{1}{(t+1)^{\alpha}}\right)^{k}
\left(\frac{1}{(t+1)^{\alpha}}\right)^{\gamma}.
\nonumber
\end{align}
In other words, $N_{t}$ follows a negative binomial distribution. Similarly, 
\begin{align}
&\mathbb{P}(N_{t}=k+m|N_{s}=m)
\\
&=\idotsint_{s<t_{1}<t_{2}<\cdots<t_{k}<t}
\alpha^{k}\prod_{j=1}^{k}\frac{\gamma+m+j-1}{t_{j}+1}
\nonumber
\\
&\qquad\qquad
\cdot
e^{-\int_{s}^{t_{1}}\frac{\alpha(\gamma+m)}{s+1}ds
-\int_{t_{1}}^{t_{2}}\frac{\alpha(\gamma+m+1)}{s+1}ds
-\cdots-\int_{t_{k}}^{t}\frac{\alpha(\gamma+m+k)}{s+1}ds}
dt_{1}dt_{2}\cdots dt_{k}
\nonumber
\\
&=\binom{k+m+\gamma-1}{k}\left(1-\left(\frac{s+1}{t+1}\right)^{\alpha}\right)^{k}
\left(\left(\frac{s+1}{t+1}\right)^{\alpha}\right)^{\gamma+m}.
\nonumber
\end{align}
\end{proof}

\begin{proof}[Proof of Theorem \ref{LinearTail}]
For any $\epsilon>0$, there exists a constant $M(\epsilon)$ so that 
for any $z\geq M(\epsilon)$, $(\alpha-\epsilon)z\leq\lambda(z)\leq(\alpha+\epsilon)z$.
Therefore, there exists some constant $C_{1}$ and $C_{2}$ that depend on $\epsilon$, $\gamma$ and $t$ so that for any $k$
\begin{equation}
(\alpha-\epsilon)^{k}C_{1}\leq\prod_{j=1}^{k}\frac{\lambda\left(\frac{\gamma+j-1}{t_{j}+1}\right)}{\frac{\gamma+j-1}{t_{j}+1}}
\leq(\alpha+\epsilon)^{k}C_{2}.
\end{equation}
And there also exist some $C_{3}$ and $C_{4}$ that may depend on $\epsilon$, $\gamma$ and $t$ so that for any $0<t_{1}<\cdots<t_{k}<t$,
\begin{align}
&-C_{3}-\int_{0}^{t_{1}}\frac{(\alpha+\epsilon)\gamma}{s+1}ds
-\int_{t_{1}}^{t_{2}}\frac{(\alpha+\epsilon)(\gamma+1)}{s+1}ds
-\cdots-\int_{t_{k}}^{t}\frac{(\alpha+\epsilon)(\gamma+k)}{s+1}ds
\\
&\leq-\int_{0}^{t_{1}}\lambda\left(\frac{\gamma}{s+1}\right)ds
-\int_{t_{1}}^{t_{2}}\lambda\left(\frac{\gamma+1}{s+1}\right)ds
-\cdots-\int_{t_{k}}^{t}\lambda\left(\frac{\gamma+k}{s+1}\right)ds
\nonumber
\\
&\leq
C_{4}-\int_{0}^{t_{1}}\frac{(\alpha-\epsilon)\gamma}{s+1}ds
-\int_{t_{1}}^{t_{2}}\frac{(\alpha-\epsilon)(\gamma+1)}{s+1}ds
-\cdots-\int_{t_{k}}^{t}\frac{(\alpha-\epsilon)(\gamma+k)}{s+1}ds.
\nonumber
\end{align}
Hence, from the proof of Theorem \ref{MarginalProp}, we have
\begin{align}
&C_{1}e^{-C_{3}}\left(\frac{\alpha-\epsilon}{\alpha+\epsilon}\right)^{k}
\binom{k+\gamma-1}{k}\left(1-\frac{1}{(t+1)^{\alpha+\epsilon}}\right)^{k}
\left(\frac{1}{(t+1)^{\alpha+\epsilon}}\right)^{\gamma}
\\
&\leq\mathbb{P}(N_{t}=k)\leq C_{2}e^{C_{4}}\left(\frac{\alpha+\epsilon}{\alpha-\epsilon}\right)^{k}
\binom{k+\gamma-1}{k}\left(1-\frac{1}{(t+1)^{\alpha-\epsilon}}\right)^{k}
\left(\frac{1}{(t+1)^{\alpha-\epsilon}}\right)^{\gamma}.
\nonumber
\end{align}
Since it holds for any $\epsilon>0$, we proved \eqref{TailLimit}.
\end{proof}

\begin{proof}[Proof of Theorem \ref{SublinearTail}]
The results for the case $\lim_{z\rightarrow\infty}\frac{\lambda(z)}{z^{\beta}}=\alpha$
can be reduced to the case $\lambda(z)=\alpha z^{\beta}$ by following the similar arguments as in the proof of Theorem \ref{LinearTail}.
So let us assume that $\lambda(z)=\alpha z^{\beta}$.
\begin{align}
\mathbb{P}(N_{t}=k)
&=\idotsint_{0<t_{1}<t_{2}<\cdots<t_{k}<t}
\alpha^{k}\prod_{j=1}^{k}\left(\frac{\gamma+j-1}{t_{j}+1}\right)^{\beta}
\\
&\qquad\qquad
\cdot
e^{-\int_{0}^{t_{1}}\frac{\alpha\gamma^{\beta}}{(s+1)^{\beta}}ds
-\int_{t_{1}}^{t_{2}}\frac{\alpha(\gamma+1)^{\beta}}{(s+1)^{\beta}}ds
-\cdots-\int_{t_{k}}^{t}\frac{\alpha(\gamma+k)^{\beta}}{(s+1)^{\beta}}ds}
dt_{1}dt_{2}\cdots dt_{k}
\nonumber
\\
&
\leq\idotsint_{0<t_{1}<t_{2}<\cdots<t_{k}<t}
\alpha^{k}\prod_{j=1}^{k}\left(\gamma+j-1\right)^{\beta}
dt_{1}dt_{2}\cdots dt_{k}
\nonumber
\\
&=\alpha^{k}t^{k}\frac{1}{k!}\prod_{j=1}^{k}\left(\gamma+j-1\right)^{\beta}.
\nonumber
\end{align}
Therefore,
\begin{equation}
\limsup_{\ell\rightarrow\infty}\frac{1}{\ell\log\ell}\log\mathbb{P}(N_{t}\geq\ell)\leq-(1-\beta).
\end{equation}
On the other hand,
\begin{align}
\mathbb{P}(N_{t}=k)
&=\idotsint_{0<t_{1}<t_{2}<\cdots<t_{k}<t}
\alpha^{k}\prod_{j=1}^{k}\left(\frac{\gamma+j-1}{t_{j}+1}\right)^{\beta}
\\
&\qquad\qquad
\cdot
e^{-\int_{0}^{t_{1}}\frac{\alpha\gamma^{\beta}}{(s+1)^{\beta}}ds
-\int_{t_{1}}^{t_{2}}\frac{\alpha(\gamma+1)^{\beta}}{(s+1)^{\beta}}ds
-\cdots-\int_{t_{k}}^{t}\frac{\alpha(\gamma+k)^{\beta}}{(s+1)^{\beta}}ds}
dt_{1}dt_{2}\cdots dt_{k}
\nonumber
\\
&\geq\alpha^{k}t^{k}\frac{1}{k!}\prod_{j=1}^{k}\left(\frac{\gamma+j-1}{t+1}\right)^{\beta}e^{-\alpha (\gamma+k)^{\beta}t}.
\nonumber
\end{align}
Therefore,
\begin{equation}
\liminf_{\ell\rightarrow\infty}\frac{1}{\ell\log\ell}\log\mathbb{P}(N_{t}\geq\ell)\geq-(1-\beta),
\end{equation}
and we proved the desired result.
\end{proof}

\section{Conclusion and Open Problems}\label{OpenSection}

In this paper, we studied a class of self-exciting point processes. We proved that the limit in the law of large numbers
is a fixed point of the rate function. When the rate function is linear, explicit formulas were obtained for the mean, variance
and covariance. Central limit theorem and large deviations were also studied. Finally, for a fixed time interval, we obtain
the asymptotics for the tail probabilities. Here is a list of open problems that are interesting to investigate in the future.
\begin{itemize}
\item
When there are more than one fixed point of $x=\lambda(x)$, say there are exactly two stable fixed points $x_{1}<x_{2}$, 
we made a plot of the probability $p_{1}$ and $p_{2}$ that the process $\frac{N_{t}}{t}$ converges to $x_{1}$ and $x_{2}$ respectively
as a function of the initial condition $\gamma$. From Figure \ref{p1p2}, the simulations suggest that $p_{1}$, $p_{2}$ are monotonic
in $\gamma$. Is that always true? Can we compute $p_{1}(\gamma),p_{2}(\gamma)$ analytically or at least obtain
asymptotics for $\gamma\rightarrow 0^{+}$ and $\gamma\rightarrow\infty$?
\item
So far, we have concentrated on the case when $x=\lambda(x)$ has finitely many fixed points. It is natural to ask
what if there are infinitely many fixed points, or more precisely, what if the Lebesgue measure of the set
of fixed points is positive, then, what will be the limiting distribution of $\frac{N_{t}}{t}$ like
as $t\rightarrow\infty$?. In Figure \ref{lambdaPiecewise}, we consider a piecewise $\lambda(x)$ that
coincides with $x$ on the interval $[2,3]$ and $[4.5,5]$ and Figure \ref{samplePathPiecewise} illustrates the limiting set 
of $\frac{N_{t}}{t}$ as time $t\rightarrow\infty$. Figure \ref{samplePathPiecewise} suggests
that the limiting set is supported on $[2,3]$ and $[4.5,5]$. 

\begin{figure}[H]
\begin{center}
\includegraphics[scale=0.55]{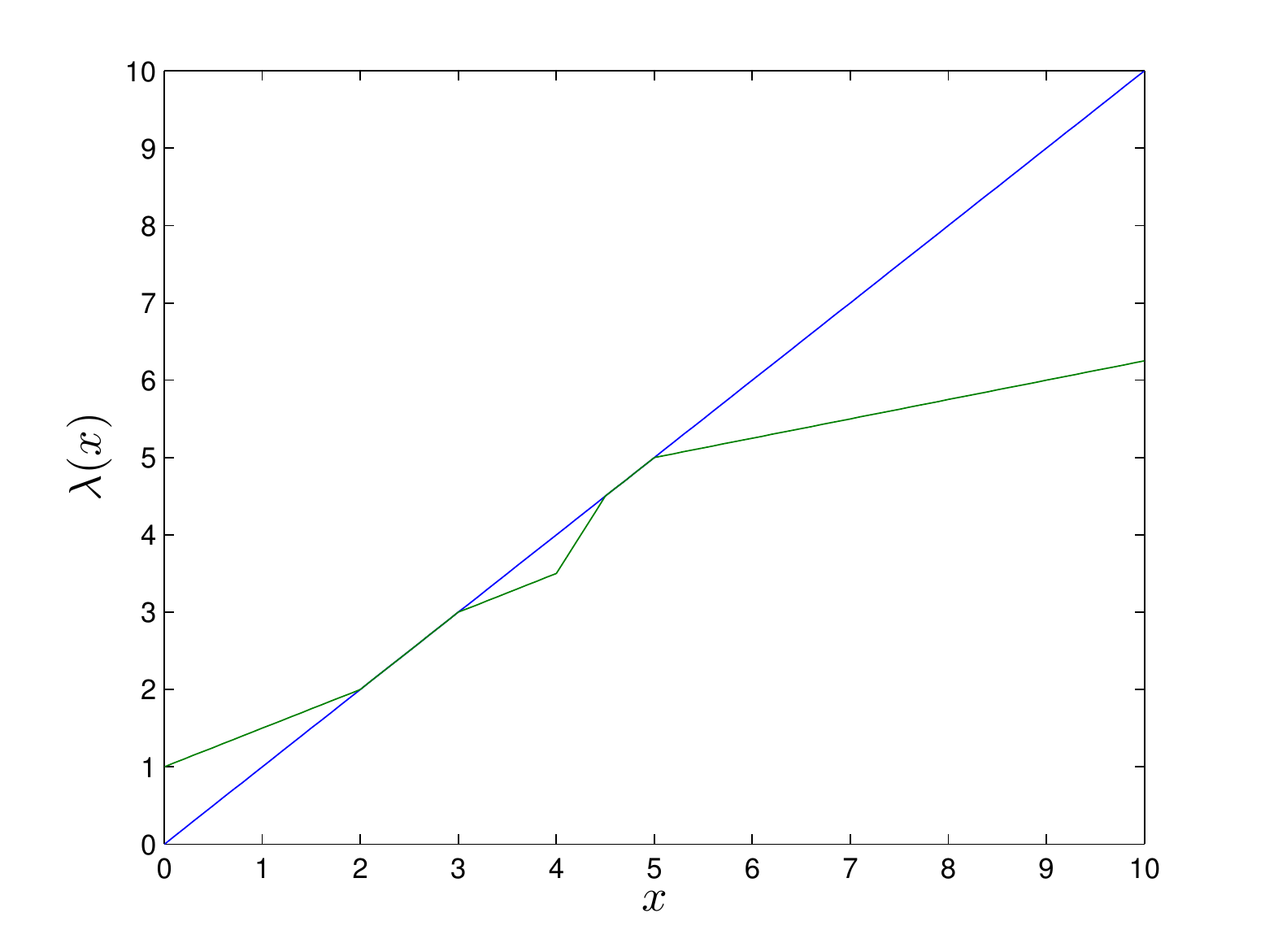}
\caption{We consider a piecewise function $\lambda(x)$ defined as $1+0.5x$ for $x<2$, $x$ for $2\leq x<3$, $1.5+0.5x$
for $3\leq x<4$, $-4.5+2x$ for $4\leq x<4.5$, $x$ for $4.5\leq x<5$ and $3.75+0.25x$ for $x\geq 5$. The set
of the fixed points of $x=\lambda(x)$ is $[2,3]\cup[4.5,5]$.}
\label{lambdaPiecewise}
\end{center}
\end{figure}

\begin{figure}[H]
\begin{center}
\includegraphics[scale=0.70]{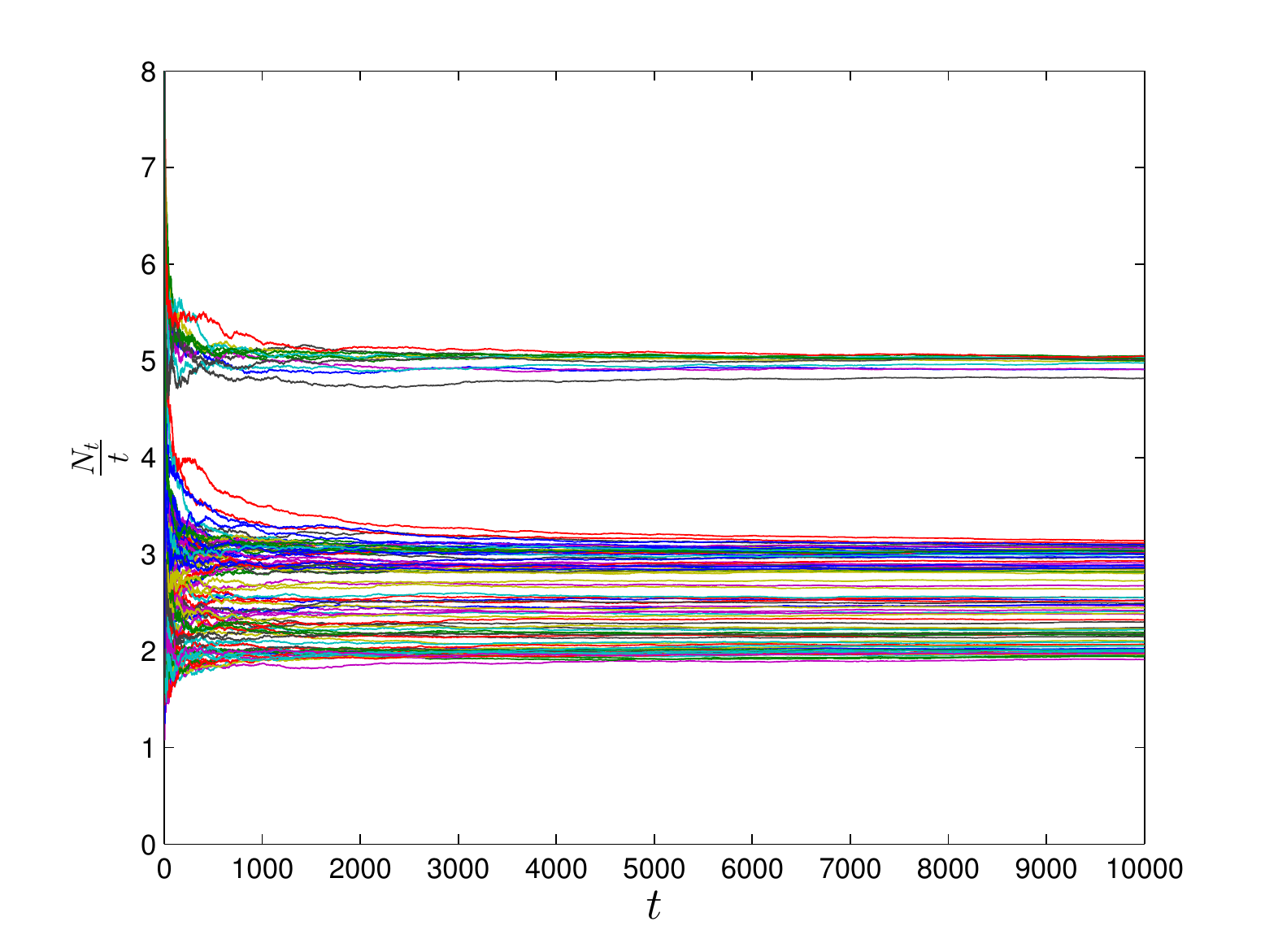}
\caption{We choose the initial starting point as $3.5$. The function $\lambda(x)$ is defined in Figure \ref{lambdaPiecewise}. 
We simulate 100 sample paths and the illustration suggests
that the limiting set of $\frac{N_{t}}{t}$ as time $t\rightarrow\infty$ is supported on $[2,3]$ and $[4.5,5]$.}
\label{samplePathPiecewise}
\end{center}
\end{figure}

\item
Sometimes, a fixed point of $x=\lambda(x)$ can be neither stable or unstable. It is possible to have a saddle point, i.e., stable
from one side and unstable from the other. Figure \ref{lambdaSemi} gives such an example in which $\lambda(x)$ is piecewise linear
and there is a stable fixed point at $x=5$ and two saddle points at $x=2$ and $x=6.5$. Can we analyze this situation?

\begin{figure}[H]
\begin{center}
\includegraphics[scale=0.55]{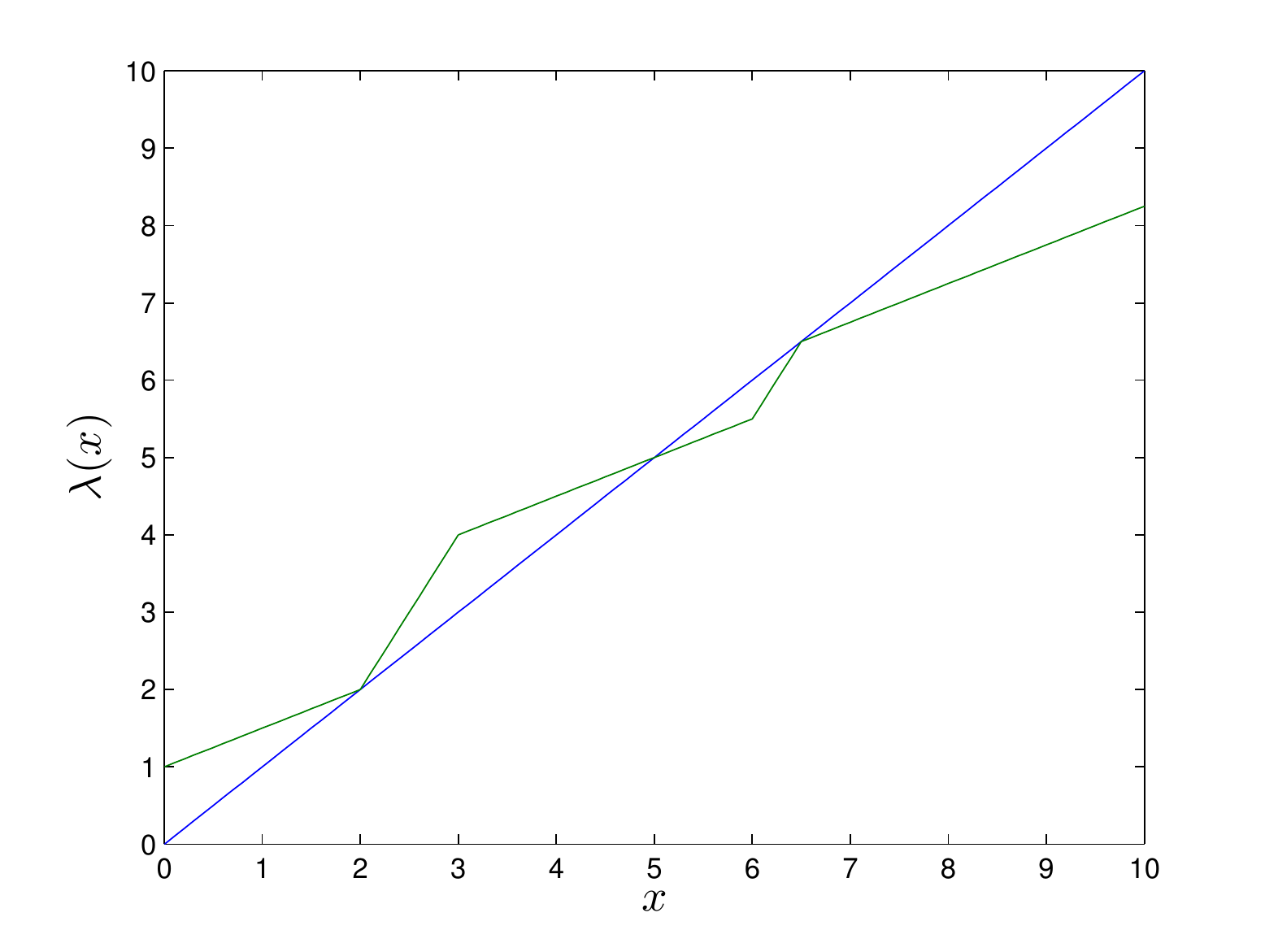}
\caption{We consider a piecewise function $\lambda(x)$ defined as $1+0.5x$ for $x<2$, $-2+2x$ for $2\leq x<3$, $2.5+0.5x$
for $3\leq x<6$, $-6.5+2x$ for $6\leq x<6.5$, and $3.25+0.5x$ for $x\geq 6.5$. The set
of the fixed points consist of a stable fixed point at $x=5$ and two saddle points at $x=2$ and $x=6.5$.}
\label{lambdaSemi}
\end{center}
\end{figure}

\begin{figure}[H]
\begin{center}
\includegraphics[scale=0.70]{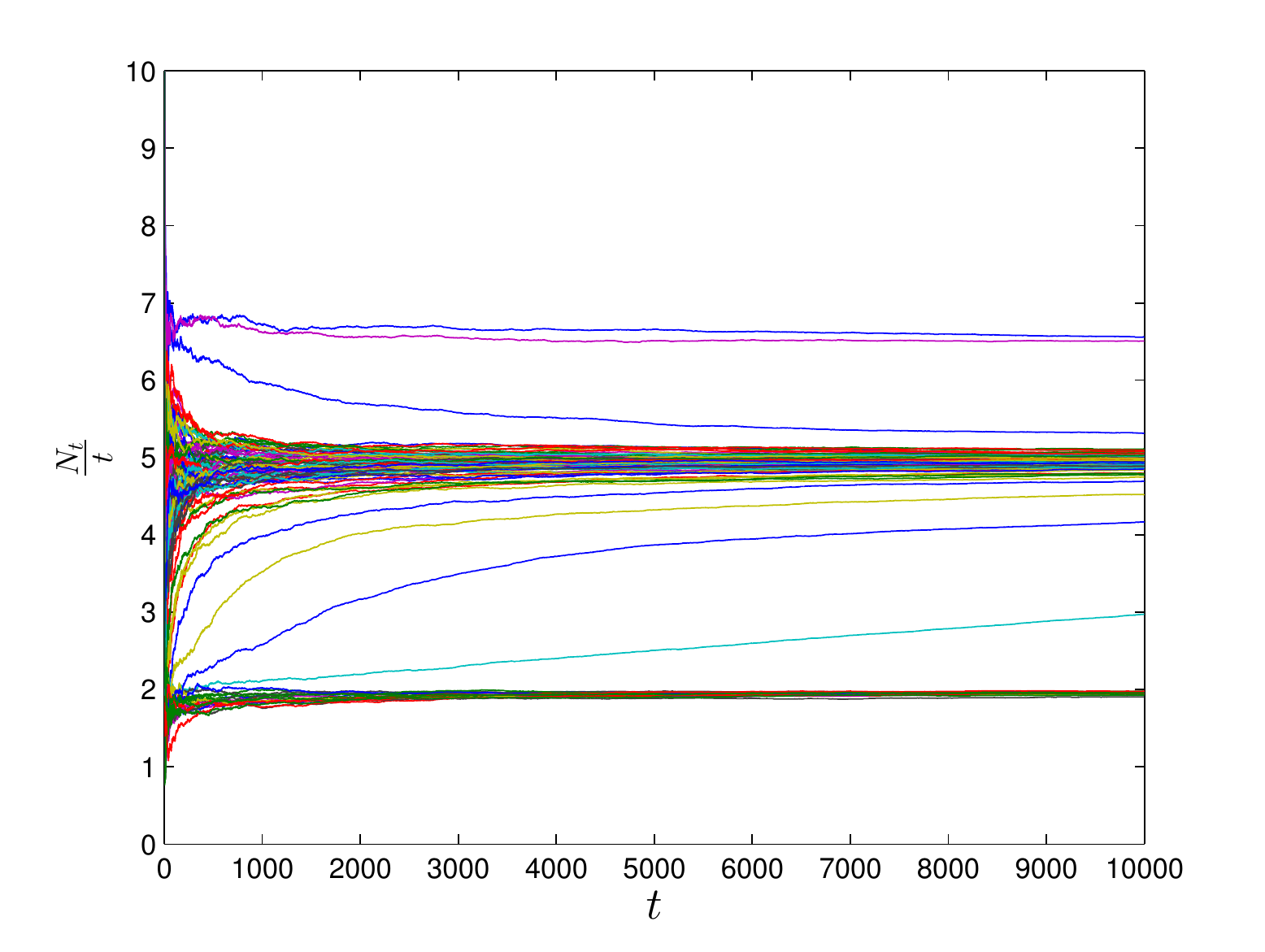}
\caption{We choose the initial starting point as $2.5$. The function $\lambda(x)$ is defined in Figure \ref{lambdaSemi}. 
We simulate 100 sample paths and the illustration suggests
that the limiting set of $\frac{N_{t}}{t}$ as time $t\rightarrow\infty$ is supported on $\{2,5,6.5\}$.}
\label{samplePathSemi}
\end{center}
\end{figure}

\item
Can we relax the assumption $\lambda(\cdot)\leq C_{0}<\infty$ in Theorem \ref{LDPThm} for the large deviations?
Can this assumption be relaxed to $\lim_{x\rightarrow\infty}\frac{\lambda(x)}{x}=0$?
\item
We obtained explicit formulas for the mean, variance and covariance of $N_{t}$ when $\lambda(x)$ is linear (here $x=\lambda(x)$ may not
have a fixed point). 
Can we at least obtain the asymptotics for the mean, variance and covariance for large $t$ when $\lambda(x)$ is nonlinear?
\item
We can also consider a $d$-dimensional simple point process $(N^{(1)}_{t},\ldots,N^{(d)}_{t})$, where $N^{(i)}_{t}$
has intensity at time $t$ given by
\begin{equation*}
\lambda_{t}^{(i)}=\frac{\sum_{j\neq i}a_{ij}N_{t-}^{(j)}}{t+1}+b_{i}.
\end{equation*}
More generally, we can consider for example 
$\lambda_{t}^{(i)}=\lambda_{i}(\frac{1}{t+1}\sum_{j\neq i}a_{ij}N_{t-}^{(j)})$ for nonlinear $\lambda_{i}(\cdot)$.
The $d$-dimensional process $(N^{(1)}_{t},\ldots,N^{(d)}_{t})$ is thus mutually exciting. Can we do the similar analysis to study
the $d$-dimensional process as in our paper?
\end{itemize}

\section*{Acknowledgements}

The authors are grateful to Maury Bramson and Wenqing Hu for helpful discussions.


\begin{thebibliography}{99}
\bibitem{Bacry}
Bacry, E., S. Delattre, M. Hoffmann and J. F. Muzy. (2013).
Scaling limits for Hawkes processes and application to financial statistics.
\textit{Stochastic Processes and their Applications}
\textbf{123}, 2475-2499.

\bibitem{BacryII}
Bacry, E., S. Delattre, M. Hoffmann and J. F. Muzy. (2011).
Modeling microstructure noise with mutually exciting point processes. Preprint.

\bibitem{Blundell}
Blundell, C., Heller, K. A. and J. M. Beck.
Modelling reciprocating relationships with Hawkes processes. Preprint, 2012.

\bibitem{Bordenave} 
Bordenave, C. and Torrisi, G. L. (2007).
Large deviations of Poisson cluster processes. \textit{Stochastic Models}, \textbf{23}, 593-625. 

\bibitem{Bremaud} 
Br\'{e}maud, P. and Massouli\'{e}, L. (1996).
Stability of nonlinear Hawkes processes. \textit{Ann. Probab.} \textbf{24}, 1563-1588.

\bibitem{Chavez} 
Chavez-Demoulin, V., Davison, A. C. and A. J. McNeil. (2005).
Estimating value-at-risk: a point process approach.
\textit{Quantitative Finance}. \textbf{5}, 227-234.

\bibitem{Chornoboy} 
Chornoboy, E. S., Schramm, L. P. and A. F. Karr. (1988).
Maximum likelihood identification of neural point process systems.
\textit{Biol. Cybern.} \textbf{59}, 265-275.

\bibitem{CoxGrimmett}
Cox, J. T. and G. Grimmett. (1984).
Central limit theorems for associated random variables and the percolation model.
\textit{Annals of Probability}.
\textbf{12}, 514-528.

\bibitem{Crane} 
Crane, R. and D. Sornette. (2008) 
Robust dynamic classes revealed by measuring the response function of a social system.
\textit{Proc. Nat. Acad. Sci. USA}
\textbf{105}, 15649.

\bibitem{Dembo} 
Dembo, A. and O. Zeitouni, \emph{Large Deviations Techniques and Applications}, 2nd Edition, Springer, 1998

\bibitem{Errais} 
Errais, E., Giesecke, K. and Goldberg, L. (2010).
Affine point processes and portfolio credit risk.
\textit{SIAM J. Financial Math}. \textbf{1}, 642-665.

\bibitem{Feller} 
Feller, W., \emph{An Introduction to Probability Theory and Its Applications}, 
Volume I and Volume II, 2nd Edition, New York, 1971.

\bibitem{Fierro}
Fierro, R., Leiva, V. and J. M\o ller.
The Hawkes process with different exciting functions and its asymptotic behavior. Preprint.

\bibitem{FooI}
Foo, J. and Leder, K. (2013).
Dynamics of cancer recurrence. 
\textit{Annals of Applied Probability}
\textbf{23}, 1437-1468.

\bibitem{FooII}
Foo, J., Leder, K., and Zhu, J. (2014). 
Escape times for branching processes with random mutational fitness effects.
\textit{Stochastic Processes and their Applications}. 
\textbf{124}, 3661-3697.

\bibitem{Hawkes} 
Hawkes, A. G. (1971).
Spectra of some self-exciting and mutually exciting point processes.
\textit{Biometrika} \textbf{58}, 83-90.

\bibitem{HawkesII} 
Hawkes, A. G. and Oakes, D. (1974). 
A cluster process representation of a self-exciting process. 
\textit{J. Appl. Prob}. \textbf{11}, 493-503.

\bibitem{HawkesIV} 
Hawkes, A. G. and L. Adamopoulos. (1973).
Cluster models for earthquakes-regional comparisons.
\textit{Bull. Int. Statist. Inst.} \textbf{45}, 454-461.

\bibitem{Karabash} Karabash, D. and Zhu, L.
Limit theorems for marked Hawkes processes with application to a risk model. Preprint.

\bibitem{Kwiecinski} Kwieci\'{n}ski, A. and R. Szekli. (1996).
Some monotonicity and dependence properties of self-exciting point processes.
\textit{Annals of Applied Probability}. \textbf{6}, 1211-1231.

\bibitem{Lipster} 
Lipster, R. S. and A. N. Shiryaev, \emph{Statistics of Random Processes} II. Applications, 2nd Edition, Springer, 2001.

\bibitem{MehrdadZhu}
Merhdad, B. and L. Zhu. On the Hawkes process with different exciting functions. Preprint.

\bibitem{OgataII} 
Ogata, Y. (1988).
Statistical models for earthquake occurrences and residual analysis for point processes.
\textit{J. Amer. Statist. Assoc.} \textbf{83}, 9-27.

\bibitem{PerniceI} 
Pernice, V., Staude B., Carndanobile, S. and S. Rotter. (2012).
How structure determines correlations in neuronal networks.
\textit{PLoS Computational Biology}. \textbf{85}:031916.

\bibitem{PerniceII} 
Pernice, V., Staude B., Carndanobile, S. and S. Rotter. (2011).
Recurrent interactions in spiking networks with arbitrary topology.
\textit{Physical Review E}. \textbf{7}:e1002059.

\bibitem{Varadhan} 
Varadhan, S. R. S., \emph{Large deviations}, The Annals of Probability, Vol. 36, No. 2, 397-419, 2008

\bibitem{VaradhanII} 
Varadhan, S. R. S., \emph{Large Deviations and Applications}, SIAM, 1984

\bibitem{ZhuCIR}
Zhu, L.
Limit theorems for a Cox-Ingersoll-Ross process with Hawkes jumps. (2014).
\textit{Journal of Applied Probability}.
\textbf{51}, 699-712.

\bibitem{ZhuMDP} 
Zhu, L. (2013).
Moderate deviations for Hawkes processes. 
\textit{Statistics \& Probability Letters}. 
\textbf{83}, 885-890.

\bibitem{ZhuI} 
Zhu, L. 
Large deviations for Markovian nonlinear Hawkes processes.
\textit{to appear in Annals of Applied Probability}.

\bibitem{ZhuII} 
Zhu, L.
Process-level large deviations for nonlinear Hawkes point processes. (2014).
\textit{Annales de l'Institut Henri Poincar\'{e}}.
\textbf{50}, 845-871.

\bibitem{ZhuIII} 
Zhu, L.
Central limit theorem for nonlinear Hawkes processes. (2013).
\textit{Journal of Applied Probability}. \textbf{50}, 760-771.

\end{thebibliography}
\end{document}